\newcolumntype{d}[1]{D{.}{.}{#1}}
\newtheorem{thm} {\textbf{Theorem}}
\newtheorem{defn} {\textbf{Definition}}
\newtheorem{ex} {\textbf{Example}}
\newtheorem{prop} {\textbf{Proposition}}
\newtheorem{prob} {\textbf{Problem}}
\newtheorem*{code} {\textbf{Code}}
\begin{document}

\title{A differential extension of Descartes' foundational approach: a new balance between symbolic and analog computation}

\author[1]{Pietro Milici}
% \thanks{Corresponding author. \printead{e1}.}%
\affil[1]{
%Journal Production Department, \orgname{UBO}, rue Duquesne 20 
Universit\'e de Bretagne Occidentale, Brest, France; \texttt{p.milici@gmail.com}}
\date{}
\maketitle

\begin{abstract}
	In \emph{La G\'eom\'etrie}, Descartes proposed a ``{balance}'' between geometric constructions and symbolic manipulation with the introduction of suitable ideal machines. In modern terms, that is a balance between analog and symbolic computation. 
	
	Descartes' geometric foundational approach (analysis without infinitary objects and synthesis with diagrammatic constructions) has been extended beyond the limits of algebraic polynomials in two different periods: by late 17th century \textit{tractional motion} and by early 20th century \textit{differential algebra}.	
	This paper proves that, adopting these extensions, it is possible to define a new convergence of machines (analog computation), algebra (symbolic manipulations) and a well determined class of mathematical objects that gives scope for a constructive foundation of (a part of) infinitesimal calculus without the conceptual need of infinity.
	To establish this balance, a clear definition of the constructive limits of tractional motion is provided by a \textit{differential universality theorem}.%(that was historically missing).
\end{abstract}

\section{Introduction}\label{intro_ch}
In \emph{La G\'eom\'etrie}, Descartes proposed a ``{balance}'' between geometric constructions and symbolic manipulation with the introduction of suitable ideal machines. In particular, Cartesian tools were polynomial algebra (analysis) and a class of diagrammatic constructions (synthesis). This setting provided a classification of curves, according to which only the algebraic ones were considered ``purely geometrical.'' 
Thanks to this approach, geometrical intuition was no longer necessary in proving new properties, because the ``method'' (suitable algorithms in the analysis) permitted to lead the thought: in modern term, there was the seed of automated reasoning.
Descartes' limit was overcome with a general method by Newton and Leibniz introducing the infinity in the analytical part, whereas the synthetic perspective gradually lost importance with respect to the analytical one---geometry became a mean of visualization, no longer of construction.

Descartes' foundational approach (analysis without infinitary objects and synthesis with diagrammatic constructions) has, however, been extended beyond algebraic limits, albeit in two different periods. In the late 17th century, the synthetic aspect was extended by \textit{tractional motion} (construction of transcendental curves with idealized machines). In the first half of the 20th century, the analytical part was extended by \textit{differential algebra}, now a branch of computer algebra in which, informally speaking, the indeterminates are not numbers but continuous (and differentiable) functions.
This paper seeks to prove that it is possible to obtain a new balance between these synthetic and analytical extensions of Cartesian tools for a class of transcendental problems. 

A reason for a renewing of the Cartesian program concerns the historical evolution of mathematical objects. Mathematics can be considered as based on two cornerstones: arithmetics (symbolic manipulation of discrete elements) and geometry (constructions based on idealized continuous behaviors).
The synthetic components of such approaches are respectively digital and analog computation, and the mutual relationship between these kind of constructions provides a cognitive richness that leads the main steps of mathematical evolution. 
When arithmetic and geometric strengths are unbalanced, their mutual conversions can constitute a challenge to overreach their own limits, as evinced from a linguistic perspective in \cite[Ch. 1]{Kva2008} (even though without distinguishing between the constructive and mere visual power of geometry). Being today mainstream mathematics too much oriented toward arithmetics, our aim is to resume the commitment toward geometric constructions.
We are dealing with geometry instead of general analog computing because, according to Descartes' perspective, the primitive bases of our knowledge have to be intuitively clear, and the simple components of geometric ideal machines 
minimize the physical complexity and the cognitive requirements. % while maximizing the obtainable results.
%simplify the physical complexity in order to maximize the obtainable results while minimizing the cognitive requirements.
From this perspective, the millennial endurance of Euclid's geometric paradigm can be justified by the wide application of its synthetic solutions, by the rigor of its analysis, but also by the concreteness of its constructive tools (segments and circles can be traced by ruler and compass).

%%\vspace{\vDist}

Concerning the organization of this work, we show a new convergence of machines (analog computation, section \ref{tractional_section}), algebra (symbolic manipulations, section \ref{differential_algebra}), and geometry (constructed mathematical objects, section \ref{manifolds}) that, together with a problem solving method (section \ref{diff_problems}), gives scope for a foundation of (a part of) infinitesimal calculus without the conceptual need of infinity\footnote
{An objection to such avoidance of infinity could be that we cannot really avoid the infinite in the analytic part, because to define continuous functions at the basis of differential algebra we need limits or similar tools. With regard to this objection, we claim that, even if one considers continuity expressible only through infinitary tools, the allowed operations in differential algebra remain in the field of a finitist symbolic manipulation (in fact, differential algebra is nowadays considered a field of computer algebra). The constructive role of infinity in differential algebra is avoidable as it is in the analysis of polynomial algebra. In classical algebra, indeterminates assume values on the field of the real numbers, the definition of which requires infinity, but algebra remains finite because it does not deal with general real numbers, one simply makes manipulations and can control only a countable subset of real numbers. Similarly, differential algebra does not deal with the definition of continuous functions: the underlying requirement is to manipulate symbols that represent such functions.
}.
To establish this balance, a clear (historically missing) definition of the constructive limits of tractional motion is provided by a \textit{differential universality theorem}.

The peculiarity of this work lies in the attention to the constructive role of geometry as idealization of machines for foundational purposes.
This approach, after the \textit{de-geometrization} of mathematics, is far removed from the mainstream discussions of mathematics, especially regarding foundations.
%\footnote{In this paper I will usually deal with ``mathematics foundation'' while referring to a possible constructive foundation of mathematics. From my perspective, such foundation is not distinguishable from the one of computer science (considering both analog and digital computation).}.
However, though forgotten these days, the problem of defining appropriate canons of construction was very important in the early modern era, and heavily influenced the definition of mathematical objects and methods.
According to Bos' definition in \cite{Bos2001}, these are \textit{exactness problems} for geometry.

\section{Machines (analog computation)}\label{tractional_section}

\subsection{Brief history of tractional motion}\label{tractional_motion_section}
The problem of extending geometry beyond Cartesian limits was dominant between 1650 and 1750 \cite{Bos1988}, and in this section, we shortly deal with it.

If direct tangent problems are present since the classical period, it was only in the second half of the 17th century that the inverse ones %\footnote{The solution of an inverse tangent problem is a curve, so its tangent has to satisfy some given properties.} 
appeared. The main difference between direct and inverse tangent problems is the role of the curve: in the direct case it is given \emph{a priori}, while in the second the curve is sought given some properties that its tangent has to satisfy.
Even though beyond Cartesian geometry, to legitimate solutions of inverse tangent problems there was the introduction of certain machines, intended as both theoretical and practical instruments, able to trace such curves.
The first documented curves constructed under tangent conditions were physically realized by the traction of a string tied to a load, which is why the study of these machines was named \textit{tractional motion}. %\cite{Bos1988, Bos1989}. 
Consider the following example:
\begin{quote}
	On a horizontal plane, a small heavy body (subjected to the friction on the plane) is tied with an ideally weightless non-elastic string, and imagine (slowly) pulling the other end of the string along a straight line drawn on the plane. 
\end{quote}	
Because of the friction on the plane, the body offers resistance to the pulling of the string: if the motion is slow enough to neglect inertia, the curve described by the body is called a \textit{tractrix}. 
%The first documented description of the {tractrix} is associated with Claude Perrault in \cite{Lei1693}. 
Examining the left of Fig. \ref{tractrix}, we can see how the curve is traced thanks to the property that the string is constantly tangent to the curve.

\begin{figure}
	\center 
	\includegraphics[height=.2\textwidth] {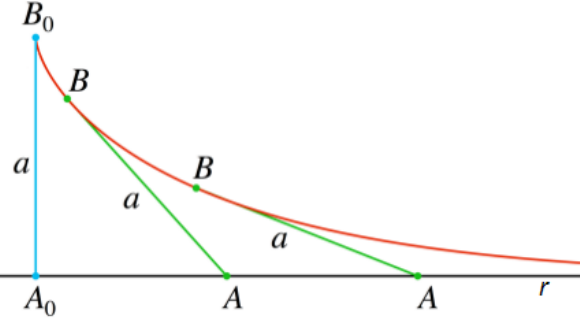}
	\hspace{1em}
	\includegraphics[height=.25\textwidth]{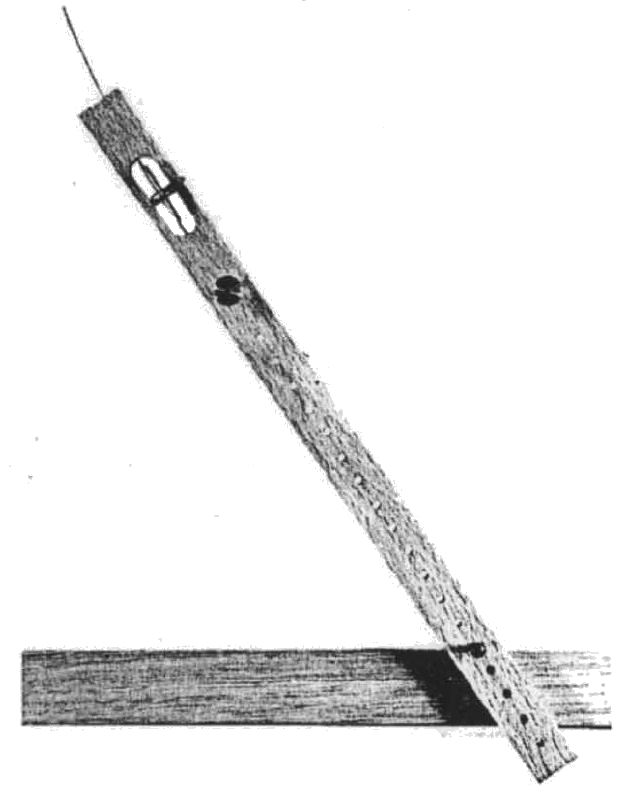}
	\caption[Tractrix]{
	[Left] The heavy body is $B$, with initial position $B_0$, the string is $a$, and the other end of the string is $A$, with initial position $A_0$. Moving $A$ along $r$, $B$ describes the tractrix (obviously, the movement is not reversible because of the non-rigidity of the string). Note how $a$ is tangent to the curve at every point.
	\newline
	[Right] Reconstruction of Perks' instrument for the tractrix \cite[p. 17]{Ped1963}, copyright license number 4518140353283. One can see the wheel taking the place of the load and a bar instead of the string.%: in this case, the extreme point of the fixed-length bar can freely move along a straight line.
	}\label{tractrix}
\end{figure}
During this period, mathematicians like Huygens began to consider instruments that, like the handlebars of a bike, could guide the tangent of a curve (in analytical mechanics terms, they introduced \textit{non-holonomic} constraints), in order to avoid inessential physical complications and so to consider tractional motion as ``pure geometric.'' Tractional motion suggested the possibility of constructing curves by imposing tangential conditions, generalizing (in a non-Cartesian way) the idea of geometrical objects, and constructing with new tools not only algebraic curves, but also some transcendental ones (seen as solutions of differential equations). During this period, the development of geometrical ideas often corresponded to the practical construction (or at least conception) of mechanical machines able to embody the theoretical properties, and thus able to trace the curves. Concerning practical machines, we recall those introduced in \cite{Per1706, Per1714} (for the machine for the tractrix see the right of Fig. \ref{tractrix}), which, for the first time, included a ``rolling wheel'' to guide the tangent. A more influential role for similar machines was played by the ones proposed in \cite[\textit{Ad Iacobum Hermannum Epistola}]{Pol1729}. 
An overview of such machines is visible in \cite{CM2019}.
As deepened below, the wheel is able to solve the inverse tangent problem because it avoids the lateral motion of its contact point. Furthermore, wheels imply less physical problems than dragging loads (e.g. inertia).

While questions about exactness in geometric constructions were so important in the early modern period, they disappeared in the 18th century because of the general affirmation of symbolic procedures, later considered autonomous from geometry. But, in contrast to what happened for algebraic curves, tractional motion did not reach a widely affirmed canon of constructions. Moreover, due to the change in paradigm, the geometric-mechanical ideas behind tractional machines remained forgotten for centuries, even for practical purposes, and were independently re-invented in the late 19th century, when they were used to build some grapho-mechanical instruments of integration (integraphs) to analogically compute symbolically non-solvable problems (for further reading, see \cite{Bla2017, Tou2009}).

\subsection{Components of tractional motion machines}
Leaving history behind, the goal of this part is to clearly define the components that can be used to obtain devices that implement certain tangent properties on a plane. Such clarification about the machines to be accepted in tractional constructions was historically missing: these ideal devices have only recently been defined with the so called \textit{tractional motion machines} (or TMMs) \cite{Mil2012, Mil2015}. 

We define the mechanical components that are allowed in a modern interpretation of tractional motion: we adopt these because they seem to give a good compromise between the simplicity of the components (two instruments and two constraints) and that of the assembled machines (even if the proposed components are not minimal \cite[Section 2]{Mil2012}). Machines obtained assembling these components (to be considered on a plane that can be infinitely extended) can be considered as an extension of Kempe's linkages:
\cite{Kem1876} stated the so-called Kempe's \textit{universality theorem}, that every bounded portion of a planar algebraic curve can be traced by linkages made of jointed finite-length rods (the proof was flawed, a corrected proof is in \cite{KM2002}).
In section \ref{constructible_functions} we provide a generalization of Kempe's result for TMMs with a \textit{differential universality theorem}.
Now, let us define the components of \textit{Tractional Motion Machines}.

\begin{itemize}
	\item 	We adopt \textbf{rods}, and assume these have perfect straightness and negligible width. 
	They can be finite or infinitely extensible: in both cases they are different from the Euclidean segments and straight lines, because they are not statically traced objects but planar rigid bodies (mechanical entities with three degrees of freedom, two characterizing the position of a specific point and the third identifying the slope with respect to a fixed line). 
	
	\item	 It is possible to put some \textbf{carts} on a rod, each one using the rod as a rail: a cart has one degree of freedom once placed on a rod (the cart can only move up and down the rod).
	
	\item 	The \textbf{joint} is a constraint between fixed points of two (or more) different objects (here, ``object'' refers to the plane, a rod, or a cart). Once the joint has been applied, jointed objects can only rotate around their common point (note that, in general, the junction point does not have to be fixed on the plane).
	
	\item 	Finally, we have the non-holonomic constraint, the \textbf{wheel}: once a rod $r$ and a point $S$ on $r$ have been selected, we can set a wheel at $S$ that prevents $S$ itself moving perpendicularly to $r$ (considering the motion of $S$ with respect to the plane). Technically, this is as if we put a fixed caster (oriented like $r$) at $S$, with its wheel rotating without slipping on the plane. As evinced since the construction of the tractrix, the avoidance of lateral motion in the rod at a point is strongly related to the tangent. If we consider the caster wheel as a disk rolling perpendicularly to the base plane, the projection of the disk surface is always tangent to the curve described by the disk contact point (see Fig. \ref{rolling_wheel}). Thus, the rod is tangent to the curve traced by the wheeled point, having the same direction as the caster wheel.
	
\end{itemize}

\begin{figure}
	\center \includegraphics[scale=.5] {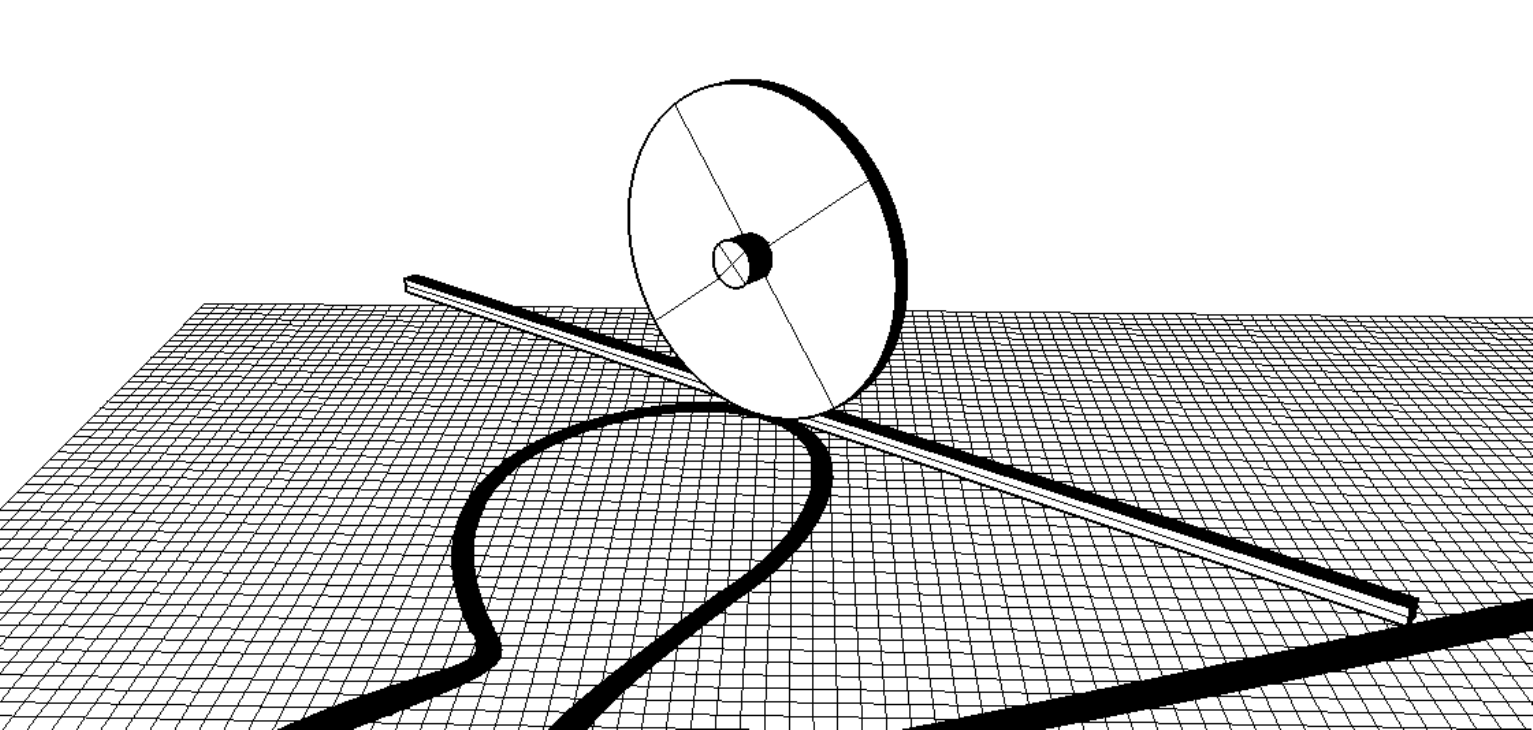}
	\caption{A wheel rolling while following any regular curve has the property that its own \textit{direction} (represented in the picture by a bar) is always tangent to the curve.}\label{rolling_wheel}
\end{figure}

Like Kempe's linkages, even TMM tools are assumed to be ideal (we do not care about physical inaccuracies), and we do not consider problems related to collisions of rods or of different carts on the same rod. Once specified such details, these components can be used to assemble machines whose motion on a plane is purely kinematic (just kinematic constraints, with no attention to other physical interrelations). For a diagrammatic representation of assembled components, see Fig. \ref{components}. 

As an important remark note that, differently from the general setting of linkages, we are not only looking for a mechanical method to define geometrical objects, but for a computational model not involving infinity from a foundational perspective. That means that we cannot accept any general distance between points fixed on a rod, because that would imply the introduction of real numbers and so of uncountable sets. As deepened in the following section, a simple solution is to introduce an arbitrary unit length and, for any point $P$ fixed on a rod $r$, to admit the constructability of the points on $r$ distant one unit from $P$. This unit-distance primitive operation is necessary because in our model there is no compass available to transfer lengths.

\begin{figure}
	\center \includegraphics[scale=0.5] {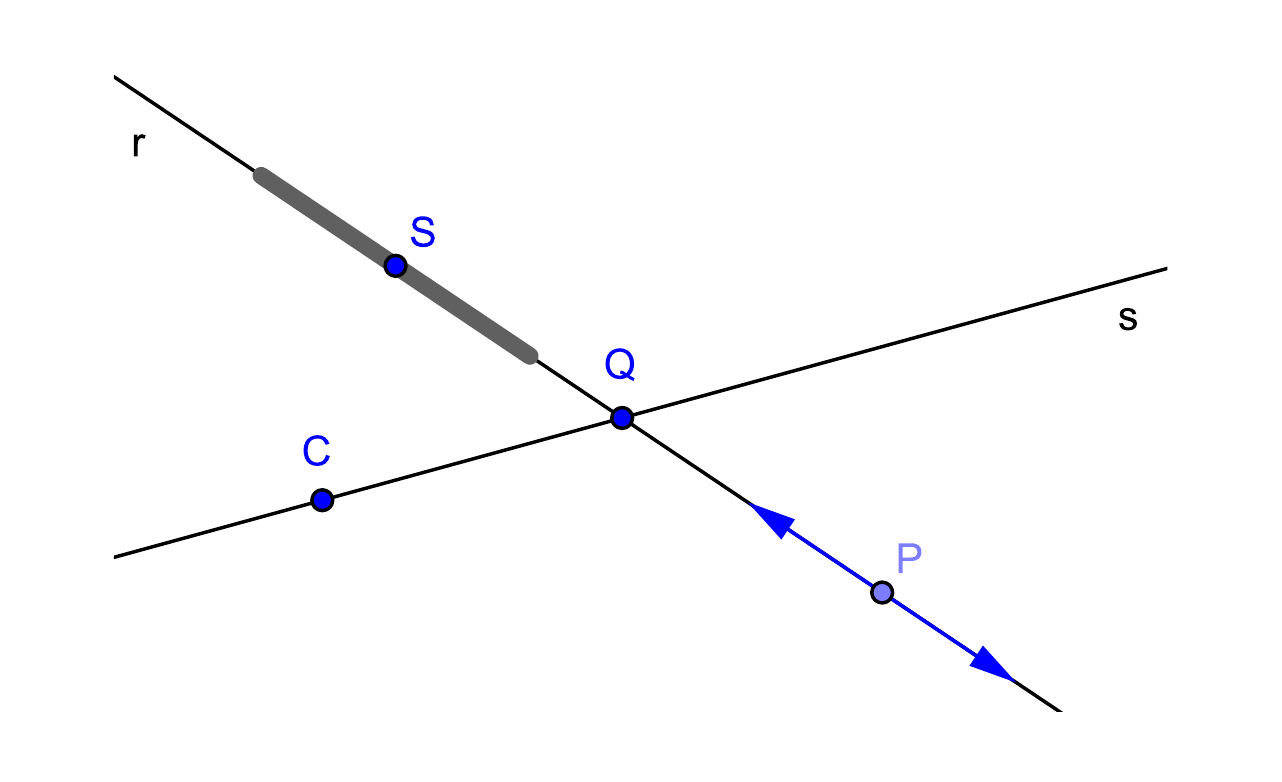}
	\caption{Schematic representation of the components: there are two rods ($r$ and $s$) joined at $Q$. On $r$, there is also a cart $P$ (the arrows stand for the possible motions the cart can have) and a wheel $S$ (the gray thick line ideally represents the projection of a wheel).}\label{components}
\end{figure}

\subsection{A code for constructions}\label{language}
Once introduced the components, we have to define how to properly assemble them with an adequate language (as an exemplary formalization of geometric constructions in a computational language see \cite{Huc1989}). 
In our language, points are enumerated by natural numbers ($P_0, P_1, \ldots$), and the \textit{index} of the point $P_n$ is $n$. These points, if not differently imposed by other constraints, can freely move on the plane. However, to construct a TMM, we have to start from a certain number of given points that are fixed on the plane. For a minimal definition, we consider as given two distinct fixed points $P_0$ and $P_1$: we introduce the unit as their distance. %they can be used to define a Cartesian plane having $P_0$ as the origin and $P_1$ of coordinates $(1,0)$, thus defining the unit. Furthermore, to define a standard right-handed orientation of the x-axis and y-axis, it is also useful to introduce $P_2$ of coordinates $(0,1)$.

Rods are represented by couples of natural numbers: by $r(i,j)$ we mean that a rod is introduced in the point $P_i$. However, we can consider many rods jointed in the same point, hence the number $j$ distinguishes between them. E.g. let $r$ be the first rod in our construction to be jointed in $P_3$, $s$ the second rod jointed in $P_3$: in our language $r$ is represented by $r(3,0)$ and $s$ by $r(3,1)$. In $r(i,j)$ the ordered values ($i,j$) are named the \textit{indices} of the rod. 
\\

There are three admissible instructions:
\begin{itemize}
    \item \texttt{onRod(k,i,j)} with $k,i,j\in \mathbb N$. It imposes that the point $P_j$ lies on the rod $r(k,i)$. Recalling the components, it implies to put a cart on a rod ($P_j$ can move along $r(k,i)$). 
    \item \texttt{dist(k,i,n,j)} with $k,i,j\in \mathbb N, n\in\mathbb Z$. It imposes that the point $P_j$ lies on the rod $r(k,i)$ at a distance $n$ from $P_k$. We have to note that $n$ can also be a negative integer, thus we need to define an orientation of the rod, as deepened below. This instruction implies the introduction of a joint.
    \item \texttt{wheel(k,i)} with $k,i\in \mathbb N$. It imposes a wheel on the point $P_k$ oriented as the rod $r(k,i)$.
\end{itemize}
We have to spend few more words about the introduction of integer distances on rods by \texttt{dist}. 
The starting point is that, once introduced the unit, we can constrain any two points on a rod to be at the distance of a unit.
Hence, we could consider the instruction \texttt{unitDist(k,i,j)} with $k,i,j\in \mathbb N$ that imposes $P_j$ to stay on $r(k,i)$ at a distance of one unit from $P_k$. Without the more general \texttt{dist}, we could obtain the point $P_l$ on $r(k,i)$ at a distance of 2 units from $P_k$ by introducing a new rod $r(j,0)$ with $P_k$ lying on it, and then imposing \texttt{unitDist(j,0,l)}. In this way, the point $P_l$ not coincident with $P_k$ is the sought point. 
Iterating this construction, we can consider any integer distance on a rod; however, that would require to 
impose conditions on 
%distinguish between possible positions of the points 
coincidence of different points
(e.g. $P_l \not\equiv P_k$),
thus we prefer to adopt \texttt{dist} with any integer distance and an orientation.

Furthermore, about the possibility of introducing an orientation, we have to precise that the specific orientation of a rod is not important, it is important that the orientation of the rod is coherent with all its points: e.g. given the instructions \texttt{dist(0,0,2,2)} and \texttt{dist(0,0,-1,3)}, the distance between $P_2$ and $P_3$ has to be $3$, not $1$. As an example to practice with \texttt{dist}, consider in Fig. \ref{Watt} the visual representation of the following code (commented on the right). Note that, using only \texttt{dist}, we get exactly Kempe's linkages with integer-length rods.
\begin{ex}\label{linkages}
\begin{code}
    Considering the two given fixed points $P_0, P_1$, let us introduce the points $P_2, P_3$ at unary distance to, respectively, $P_0$ and $P_1$. Set the distance between $P_2$ and $P_3$ equal to 2 units, and consider their middle point $P_4$.
    
    	\begin{tabular} {p{0.5cm} p{3.5cm} p{9.5cm}} %{ll} 
%    	\begin{tabular} {p{0.1cm} p{7.5cm} p{6.5cm}} %{ll} 
		& \texttt{dist(0,0,1,2)} & consider $P_2$ on the rod $r(0,0)$ at unary distance from $P_0$\\
		& \texttt{dist(1,0,1,3)} & consider $P_3$ on the rod $r(1,0)$ at unary distance from $P_1$\\
		& \texttt{dist(2,0,2,3)} & set the distance between $P_2$ and $P_3$ equal to 2 units along $r(2,0)$\\
		& \texttt{dist(2,0,1,4)} & $P_4$ is the point on $r(2,0)$ distant 1 unit from $P_2$\\
		\noalign{\vskip 1mm}
	\end{tabular}
\end{code}
\end{ex}
Note that, even though there are two point on $r(2,0)$ distant 1 unit from $P_2$, $P_4$ is uniquely defined because it has to be in the ray  $\overrightarrow{P_2P_3}$ ($P_3$ has distance $+2$ from $P_2$, thus with the same sign of the distance of $P_4$). The other point one unit away from $P_2$ is $P_5$ defined by \texttt{dist(2,0,-1,5)}.
About indices, note that two points with different indices can coincide: adding \texttt{dist(3,0,2,2)} and \texttt{dist(3,0,1,6)} would imply that the point $P_6$  concides with $P_4$ in every configuration.
\\
\begin{figure}
    \centering
    \includegraphics[width=.4\textwidth]{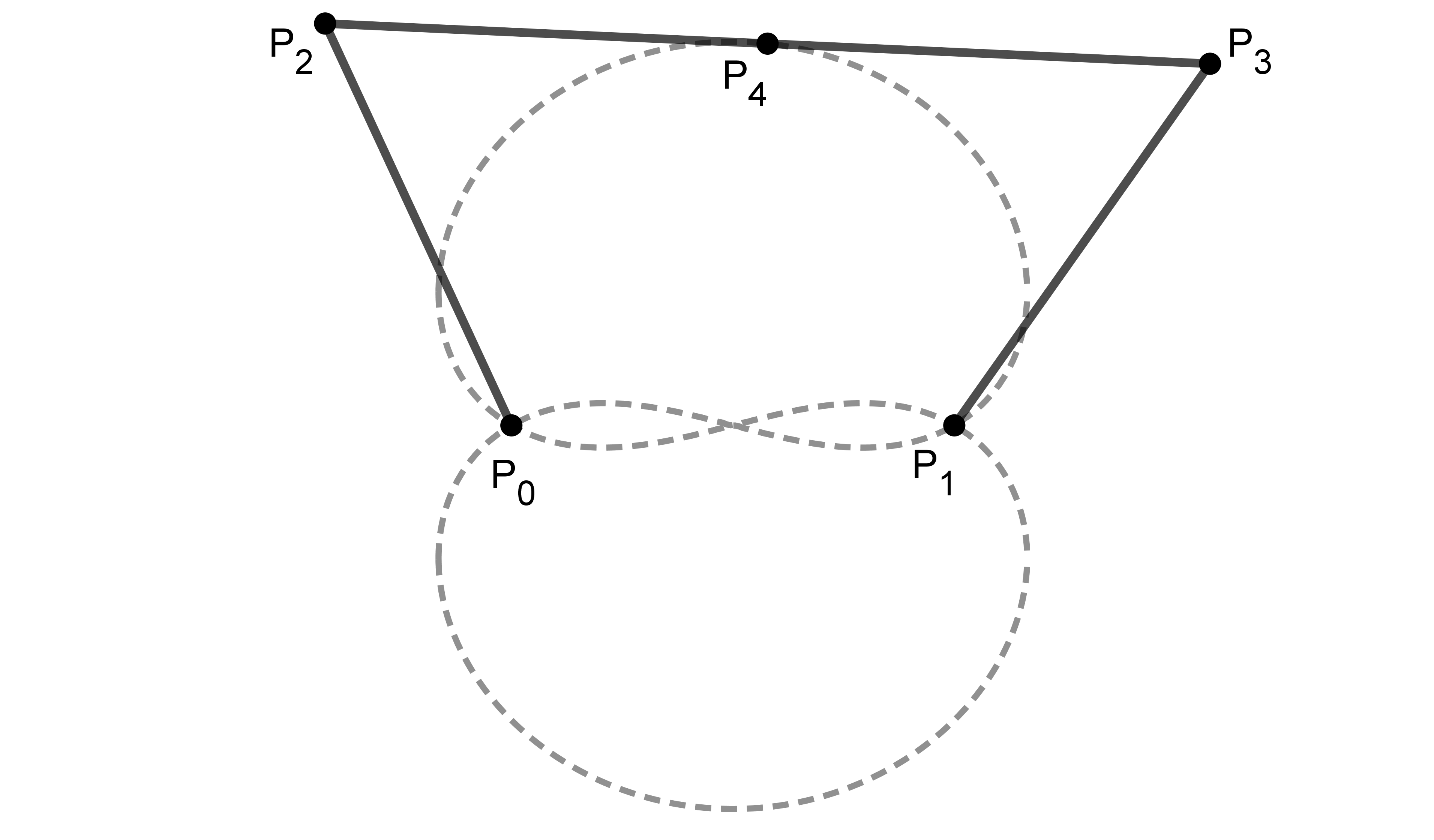}
    \caption{A simple machine without carts and wheels. The dotted line represents the locus defined by the point $P_4$. See footnote \ref{note:realAlg} at page \pageref{note:realAlg} for an analytical study.}
    \label{Watt}
\end{figure}

Once defined the instructions, we can consider their analytical conversion. That permits us to use computer algebra as a tool of automated reasoning on the behavior of our machines.
First of all, when introducing a rod $r(k,i)$, besides $P_k$ we have to consider an auxiliary point $Q_{k,i}$ to determine the orientation of the rod. This point has to satisfy only the property of being one unit away from $P_k$ (informally, $Q_{k,i}$ is the point $P_j$ s.t. \texttt{dist(k,i,1,j)}). Analytically, considering a system of Cartesian coordinates s.t. $P_0=(0,0)$ and $P_1=(1,0)$, and adopting the notation $x(P), y(P)$  respectively for the abscissa and the ordinate of the point $P$,  we can introduce $Q_{k,i}$ by the equation:
\begin{equation*}\label{eqQ}
    (x(P_k)-x(Q_{k,i}))^2+(y(P_k)-x(Q_{k,i}))^2=1.
\end{equation*}
With $Q_{k,i}$, it is easy to convert instructions in equations:
\texttt{onRod(k,i,j)} becomes 
\begin{equation*}\label{eqRod}
(x(P_k)-x(Q_{k,i}))(y(P_j)-y(Q_{k,i}))=(x(P_j)-x(Q_{k,i}))(y(P_k)-y(Q_{k,i}))
\end{equation*}
and \texttt{dist(k,i,n,j)} is expressed by
$$\begin{cases}\label{eqDist}
x(P_j)=x(P_k) + n (x(Q_{k,i})-x(P_k))\\
y(P_j)=y(P_k) + n (y(Q_{k,i})-y(P_k))
\end{cases}.$$

For the wheel constraint, we have to consider a point not only as a couple of coordinates, but as a couple of functions.
As typical in physics, consider $P_k=(x_k(t),y_k(t))$, i.e. consider the Cartesian coordinates of the point in function of the time. The instruction \texttt{wheel(k,i)}  poses the condition that $P_k$ cannot move perpendicularly to $r(k,i)$: so, considering $P_k'=\left(\frac{d}{dt}x_k,\frac{d}{dt}y_k\right)$, $P_k'$ has to be parallel to $Q_{k,i}-P_k$.
Thus, omitting the dependence on $t$ and considering $P_k'=(x'(P_k),y'(P_k))$, the wheel constrain becomes
\begin{equation*}\label{eqWheel}
y'(P_k) (x(Q_{k,i}) - x(P_k))=x'(P_k) (y(Q_{k,i})-y(P_k)).
\end{equation*}

Thus, both wheel constraints and the other instructions are translatable in polynomials in the variables and their derivatives: as deepened in the algebraic part (section \ref{differential_algebra}), such polynomials are named \textit{differential polynomials} and constitute the basis for differential algebra.
\\

In the following section we introduce subroutines to solve some problems about constructions. When using subroutines, we also need to define the instructions \texttt{return(i)} and \texttt{newPoint(i)}  (with $i \in \mathbb N$). Note that, once fixed the input, each implementation can be obtained without subroutines and the last two instructions (they are not adding any primitive to the model). However, subroutines are useful to give general constructions in function of inputs.

About \texttt{return(a)}, it returns the natural number $a$ to the program calling the subroutine.

The other instruction, \texttt{newPoint(i)}, returns a natural number not previously used to enumerate an already introduced point
(i.e. first and third arguments of \texttt{onRod}, first and forth arguments of \texttt{dist}, first argument of \texttt{wheel}). 
To stay compact, the shorter notation \texttt{[i]} stands for \texttt{newPoint(i)}.
This instruction comes in useful because, in a subroutine, we don't know which natural numbers are yet free to introduce new points.
Let $n$ be the highest value used as index of a point before the execution of the instruction:  \texttt{newPoint} returns the integer $n+1$. But later we may need to recall the newly introduced index, and we can use multiple \texttt{newPoint}: that's why it was necessary to add the index $i$ (e.g. \texttt{onRod([0],0,[1])} constrains the point $P_{[1]}$ to be on the rod $r([0],0)$).
Note that, to introduce a new rod in a given point $P_i$ without knowing how many rods have already been joined in it, we can consider a new point $P_{[k]}$ (assuming that previously we have introduced $P_{[0]}, P_{[1]}, \ldots, P_{[k-1]}$) coincident with $P_i$. The new rod $r([k],0)$ can be defined by the instruction \texttt{dist([k],0,0,i)}. 

%Similarly, the second instruction \texttt{newRodIndex(i,j)}, shortened by \texttt{[i,j]}, indicates an unused index to identify a new rod jointed in $P_i$.
%Let $n$ be the greatest index of rod jointed in the rod $P_i$ before calling the instruction (or $-1$ if we have not considered yet any rod in $P_i$) : \texttt{[i,j]} returns $n+1$. The second index $j$ is introduced to manage multiple presences of the instruction. E.g. if we have already introduced the rods $r(5,0), r(5,1), r(5,2)$, by \texttt{[5,0]} we mean the  number 3.

To conclude, we have to remark that the use of \texttt{newPoint} %and \texttt{newRodIndex} 
can create some unwanted situations. For example, in a subroutine, it can happens that \texttt{newPoint} creates an index that is used in the following instructions as a fixed index of a point. To avoid that, it might be useful to have a set of indices reserved for these local construction points, 
%for example we could interpret the family of \texttt{[i]} as ****** .
but we leave such possible optimization (as a real software implementation) to future works.

\subsection{Algebraic machines}\label{Algebraic_machines}
It is also interesting to consider TMMs without wheels. In this case every constraint can be translated in algebraic polynomial equations, so we call the obtained machines ``algebraic.'' There are algebraic machines that cannot be considered as Kempe's linkages, because rods are not only used to constrain a fixed length between two junction points, but also to allow a point to move along a straight line (thanks to carts). %\footnote
	{With only Kempe's linkages to trace a straight line is not trivial at all, the problem was solved in an approximated way by Watt, and later exactly solved by Peaucellier (see, for example, \cite[pp. 29-30]{DOR2007}).
	Algebraic machines can be somehow considered as more adherent to Descartes' machines for geometry than to Kempe's linkages (Descartes used machines in which straight components were allowed to slide along other straight components).
	Furthermore, the introduction of ``extensible'' rods, allows to trace not only finite part of algebraic curves, but whole continuous branches of algebraic curves (as already observed, Kempe's linkages can construct only bounded portions of algebraic curves).
}

Before solving some problems to perform sum and product with algebraic machines, we need to remark that the components of algebraic machines are not part of Euclid's geometry: they are movable mechanical parts, not static traces on a plane, so constructions have to work dynamically. Thus, even though problems like the following ones can be easily solved with ruler and compass if we substitute ``rod'' with ``segment,'' we need new constructions for our mechanical setting.
All the constructions dynamically work in every configuration of the inputs, and in general input objects are points and rods that can move.

\begin{prob}[perpendicular]\label{perp}
	Given a rod $r$ and a point $P$, construct a rod $s$ perpendicular to $r$ passing through $P$.
\end{prob}
\begin{code}
    Let the rod $r$ and $P$ be in our language respectively $r(k,i)$ and $P_j$. The subroutine returns the number $a$ such that $r(a,0)$ is the sought rod. Note that we can guarantee that in the rod $r(a,0)$ the second index is 0 because the point $a$ is newly introduced by the subroutine.
    
    	\begin{tabular} {p{0.1cm} p{7.5cm} p{6.5cm}} %{ll} 
		\textbf{\texttt{perp(k,i,j)}} &	& signature of the subroutine\\
		\noalign{\vskip 1mm}
		& \texttt{onRod(k,i,[0])}	& consider a new point $P_{[0]}$ on $r(k,i)$\\
		\noalign{\vskip 1mm}
		& \texttt{dist([0],0,4,[1])} & consider a new point $P_{[1]}$ on $r([0],0)$ distant 4 units from $P_{[0]}$ \\
		\noalign{\vskip 1mm}
		& \texttt{dist([0],1,3,[2])} & consider a new point $P_{[2]}$ distant 3 units from $P_{[0]}$\\
		\noalign{\vskip 1mm}
		& \texttt{dist([2],0,5,[1])} & constrain $P_{[1]}$ to stay 5 units away from $P_{[2]}$\\
		\noalign{\vskip 1mm}
		& \texttt{onRod([0],0,j)} & constrain $P_j$ to stay on $r([0],0)$\\
		\noalign{\vskip 1mm}
		& \texttt{onRod(k,i,[2])} & constrain $P_{[2]}$ to stay on $r(k,i)$\\
		\noalign{\vskip 1mm}
		& \texttt{return([0])}	& the sought rod is $r([0],0)$, return $[0]$\\
		\noalign{\vskip 1mm}
	\end{tabular}
\end{code}
\begin{proof}
	Construct a right triangle by the junction of a Pythagorean triple as rod lengths (e.g. connect three rods respectively of length 3-4-5). Consider an infinitely extensible rod $s$ for one of the catheti, and make the other cathetus slide on $r$ with two carts on the vertices. Pose another cart on $s$ in correspondence of $P$. As visible in Fig. \ref{perp_rod+}, that solves the problem.
    %This and all the constructions of this paper dynamically work in every configuration, because in general the input objects can move.
	\begin{figure}
		\center \includegraphics[scale=.5] {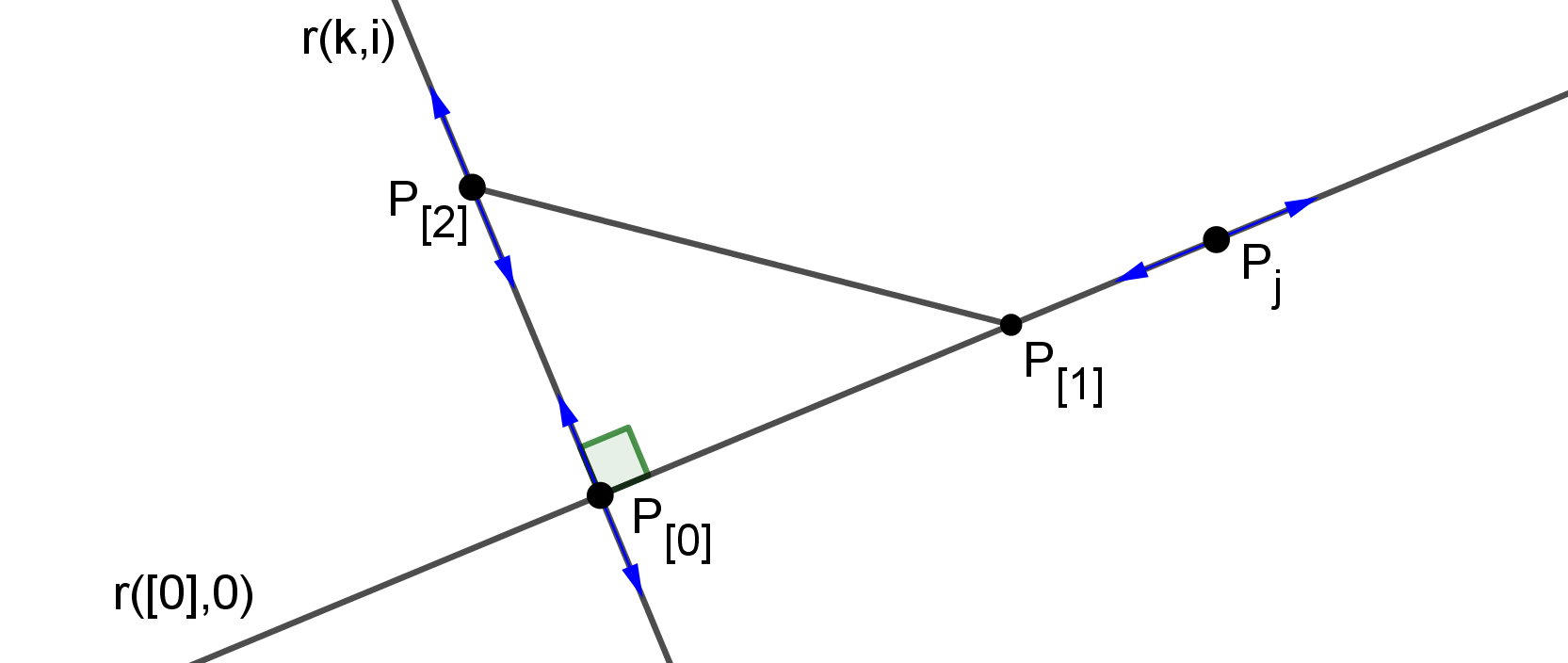}
		\caption{Construction of the rod $r([0],0)$ perpendicular to $r(k,i)$ passing through $P_j$.}\label{perp_rod+}
	\end{figure}
\end{proof}

\begin{prob}[parallel]\label{parallel}
	Given a rod $r$ and a point $P$, construct a rod parallel to $r$ passing through $P$.
\end{prob}
\begin{code}
    Let the rod $r$ and $P$ be respectively $r(k,i)$ and $P_j$. The subroutine returns the number $b$ such that $r(b,0)$ is the sought rod.
    
    	\begin{tabular} {p{0.1cm} p{6.5cm} p{7.5cm}} %{ll} 
		\textbf{\texttt{parall(k,i,j)}} &	& signature of the subroutine\\
		\noalign{\vskip 1mm}
		& \texttt{return(perp(perp(k,i,j),0,j))} & let's start from the inner subroutines: $perp(k,i,j)$ returns the index $a$ s.t. $r(a,0)$ is a rod perpendicular to $r(k,i)$ passing through $P_j$. Then, $perp(a,0,j)$ returns the index $b$ s.t. $r(b,0)$ is a rod perpendicular to $r(a,0)$ passing through $P_j$. Such $b$ is returned. \\
		\noalign{\vskip 1mm}
	\end{tabular}
\end{code}
\begin{proof}
	According to Problem \ref{perp} we can construct a rod $s$ perpendicular to $r$ passing through $P$, and similarly we can construct a rod $t$ perpendicular to $s$ passing through $P$: $t$ solves the problem.
\end{proof}

In the following problems we adopt Cartesian coordinates to simplify the notation. % of constructions.
We have already introduced the points $P_0$ and $P_1$ respectively of coordinates $(0,0)$ and  $(1,0)$. Let's consider the rod $r(0,0)$ as the usually oriented x-axis (by \texttt{dist(0,0,1,1)}), and the point $P_2$ at unary distance from $P_0$ on the rod perpendicular to $r(0,0)$ passing through $P_0$ (by \texttt{dist(perp(0,0,0),0,1,2)}). 
Considering the x-axis oriented horizontally to the right, the possible positions of $P_2$ can be above or below the x-axis. However, our constructions works for both the possibilities,  it only changes the orientation of the coordinate system. In the following figures we always consider the standard positive orientation (while the positive x-axis points right, the positive y-axis points up), but all the results remain with the negative orientation.

\begin{prob}[inverse]\label{inverse}
	Given a point of Cartesian coordinates $(x,y)$, construct a point of coordinates $(y,x)$.
\end{prob}
\begin{code}
    Assuming that $P_0,P_1,P_2$ are respectively $(0,0), (1,0), (0,1)$, let $P_i=(x,y)$ be the input point. The subroutine returns the index of the point $(y,x)$.
    
    	\begin{tabular} {p{0.1cm} p{7cm} p{7cm}} %{ll} 
		\textbf{\texttt{inv(i)}} &	& signature of the subroutine\\
		\noalign{\vskip 1mm}
		& \texttt{onRod(2,0,1)} & the rod $r(2,0)$ passes through $P_1$\\
		\noalign{\vskip 1mm}
		& \texttt{onRod(perp(0,0,i),0,[0])} & $P_{[0]}$ is on the rod through $P_i$ parallel to the y-axis\\
		\noalign{\vskip 1mm}
		& \texttt{onRod(perp(2,0,0),0,[0])} & $P_{[0]}$ also is constrained to lie on the bisector of the first and third quadrant\\
		\noalign{\vskip 1mm}
		& \texttt{onRod(parall(0,0,[0]),0,[1])} & $P_{[1]}$ is on the rod through $P_{[0]}$ parallel to the x-axis\\
		\noalign{\vskip 1mm}
		& \texttt{onRod(parall(2,0,i),0,[1])} & $P_{[1]}$ is also constrained to lie on the rod parallel to $r(2,0)$ through $P_i$\\
		\noalign{\vskip 1mm}
		& \texttt{return([1])} & the sought point is $P_{[1]}$\\
		\noalign{\vskip 1mm}
	\end{tabular}
\end{code}
\begin{proof}
    As visible in Fig. \ref{fig:symm}, starting from the point $(x,y)$ and calling $r(2,0)$ the rod passing through $(1,0)$ and $(0,1)$, we can consider the points $(x,x)$ and finally $(y,x)$ by intersecting rods parallel and perpendicular to the x-axis and to $r(2,0)$.
%    given the we can consider  ****************	Consider the points $(1,0)$ and $(0,1)$. It is possible to put a rod $r$ joint in $(1,0)$ with $(0,1)$ on it. According to Problem \ref{parallel}, we can construct the rod $s$ joined in $(0,t)$ and parallel to $r$. Assuming that the ordinate axis is given by the rod $y$, the point $(t,0)$ will be defined (using carts) as the intersection of $y$ and $s$.
\end{proof}
\begin{figure}
    \centering
    \includegraphics[width=.35\textwidth]{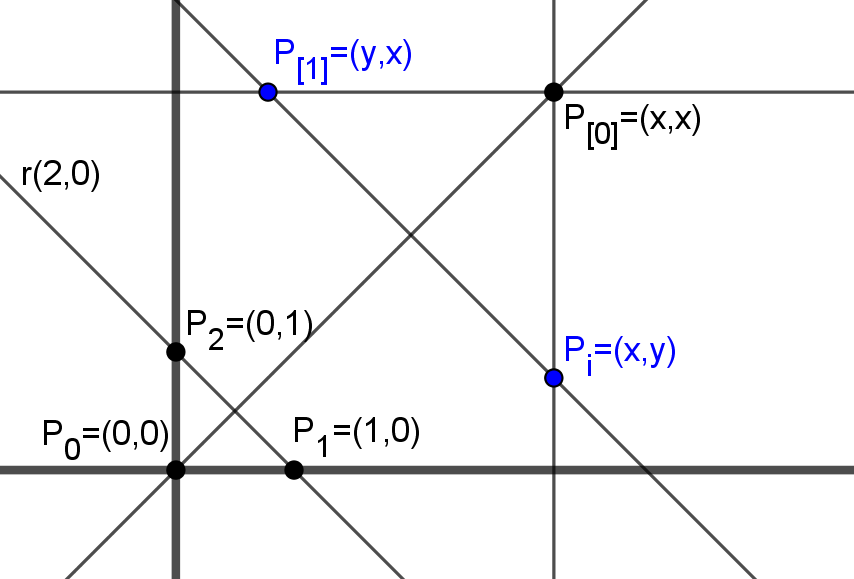}
    \caption{Schema of the construction of the point $(y,x)$ given the point $(x,y)$ using rods parallel and perpendicular to the x-axis and to $r(2,0)$ (rod through $(1,0)$ and $(0,1)$).}
    \label{fig:symm}
\end{figure}

According to Problem \ref{perp} we can project a point on x- and y- axes and, according to the last problem, we can transpose a length from the ordinate to the abscissa. %That allows us to always consider the variables simply as points on the abscissa. 
Indeed, given a point of coordinates $(x_0,y_0)$, we can construct the points of coordinates $(x_0,0)$ and $(y_0,0)$. Conversely, given the points of coordinates $(x_0,0)$ and $(y_0,0)$, we can construct the point $(x_0,y_0)$. Thus, to represent variables in algebraic machines, we can interpret them simply as points moving on the abscissa. Algebraically, such variables are real values.
It's time to show how to perform the internal binary operations of sum, difference and multiplication for abscissas of points.
%\footnote{In these machines there is no difference between input and output points. So, being the subtraction and the division respectively the inverse of addition and multiplication, we do not need to introduce them. To find $c$  such that $c=a-b$, we simply have to impose $a=c+b$, and similarly for the division ($c=a/b$ will be posed as $a=b c$). About division, as in arithmetic, when $b=0$ the instrumental implementation will not be an operation: It will, however, imply $a=0$ and no constraints on $c$. }
%\vspace{\vDist}

\begin{prob}[sum]\label{sum}
	Given two points of Cartesian coordinates $(x_1,y_1)$ and $(x_2,y_2)$, construct a point of coordinates $(x_1+x_2,0)$.
\end{prob}
\begin{code}
    Consider the x-axis $r(0,0)$, and let $(x_1,y_1), (x_2,y_1)$ respectively be $P_i, P_j$. The subroutine returns the index of the point $(x_1+x_2,0)$.
    
    	\begin{tabular} {p{0.1cm} p{7.5cm} p{6.5cm}} %{ll} 
		\textbf{\texttt{sum(i,j)}} &	& signature of the subroutine\\
		\noalign{\vskip 1mm}
		& \texttt{onRod(parall(0,0,inv(i)),0,[0])} & a new point $P_{[0]}$ lies on $y=x_1$\\
		\noalign{\vskip 1mm}
		& \texttt{onRod(perp(0,0,j),0,[0])} &  $P_{[0]}$ has coordinates $(x_2,x_1)$\\
		\noalign{\vskip 1mm}
		& \texttt{onRod(inv(i),0,i)}
		%onRod(i,[i,0],inv(i))} 
		& a new rod 
		%$r(i,[i,0])$ 
		$r(inv(i),0)$ passes through $(x_1,y_1)$ and $(y_1,x_1)$\\
		\noalign{\vskip 1mm}
		& \texttt{onRod(parall(inv(i),0,[0]),0,[1])} 
		%onRod(parall(i,[i,0],[0]),0,[1])
		& the new point $P_{[1]}$ lies on the rod parallel to 
		$r(inv(i),0)$
		%$r(i,[i,0])$ 
		passing through $P_{[0]}$\\
		\noalign{\vskip 1mm}
		& \texttt{onRod(0,0,[1])} & the point $P_{[1]}$ is constrained to lie on the x-axis\\
		\noalign{\vskip 1mm}
		& \texttt{return([1])} & the sought point $P_{[1]}$ has coordinates $(x_1+x_2,0)$\\
		\noalign{\vskip 1mm}
	\end{tabular}
\end{code}
\begin{proof}
%	First of all a little remark about the notation: we are considering the points of coordinates $(x_i,y_i)$ with $x_i$ instead of $x_i$ to highlight that our focus is to perform an operation of two terms in the first coordinate.
	According to Problem \ref{inverse}, consider the point of coordinates $(y_1,x_1)$. Thanks to Problem \ref{parallel} we can consider the rod $r$ parallel to the x-axis passing through $(y_1,x_1)$ and the rod $s$ parallel to the y-axis passing through $(x_2,y_2)$. By carts we can identify the point $(x_2,x_1)$ (the one lying on both $r$ and $s$). Finally we can consider the rod $t$ joined in $(x_2,x_1)$ parallel to the rod passing through $(x_1,y_1)$ and $(y_1,x_1)$: the point in the intersection of $t$ and the x-axis has coordinates $(x_1+x_2,0)$ (cf. Fig. \ref{sum_rod}).
	\begin{figure}
		\center \includegraphics[width=.4\textwidth] {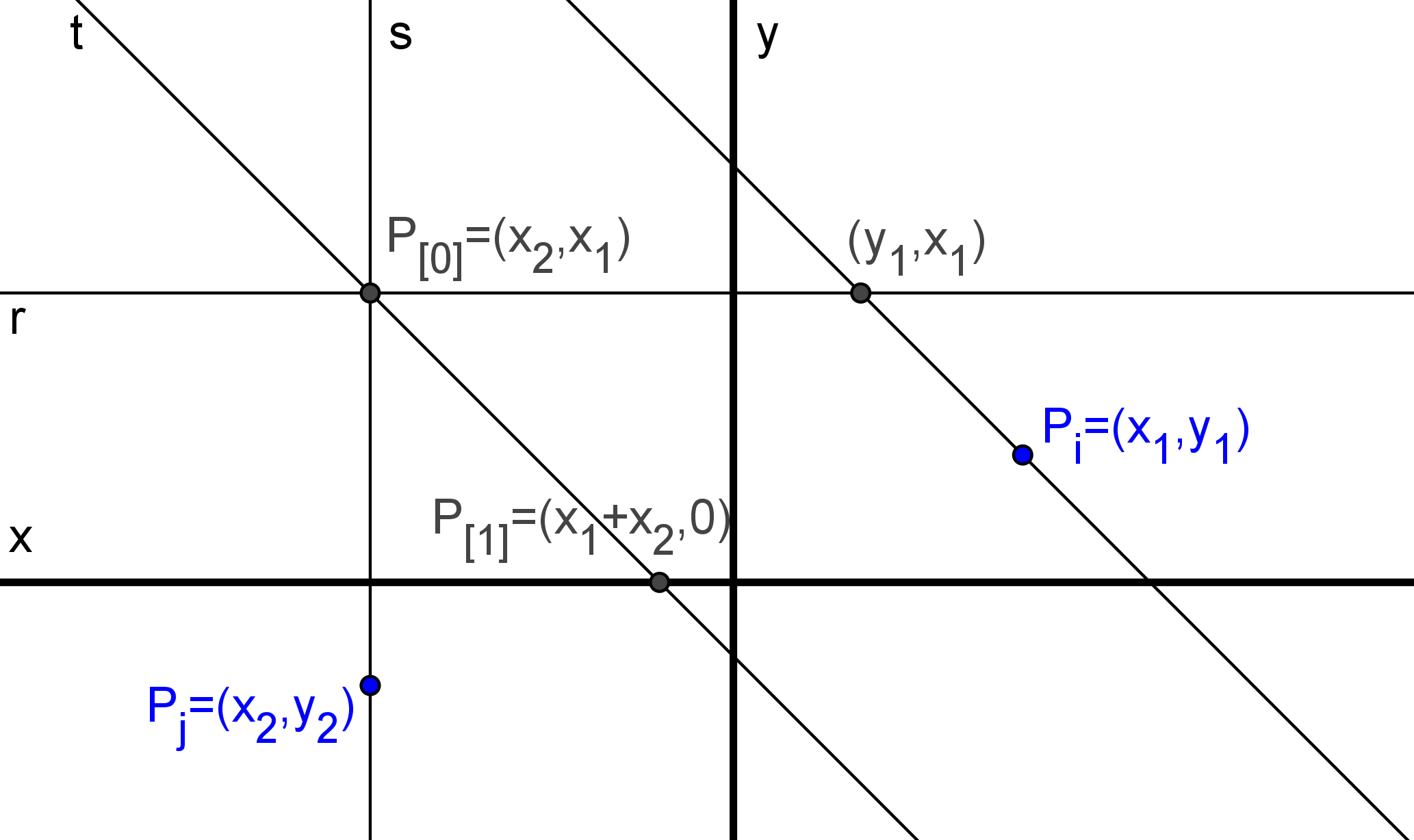}
		\caption[Construction for the sum]{Construction of the point $(x_1+x_2,0)$ given the points $(x_1,y_1)$ and $(x_2,y_2)$.}\label{sum_rod}
	\end{figure}
\end{proof}

\begin{prob}[difference]\label{diff}
	Given two points of coordinates $(x_1,y_1)$ and $(x_2,y_2)$, construct a point of coordinates $(x_1-x_2,0)$.
\end{prob}
\begin{code}
    Consider the x-axis $r(0,0)$, and let $(x_1,y_1), (x_2,y_2)$ respectively be $P_i, P_j$. The subroutine return the index of the point $(x_1-x_2,0)$.
    
    	\begin{tabular} {p{0.1cm} p{7.5cm} p{6.5cm}} %{ll} 
		\textbf{\texttt{diff(i,j)}} &	& signature of the subroutine\\
		\noalign{\vskip 1mm}
		& \texttt{onRod(0,0,[0])} & $P_{[0]}$ lies on the x-axis\\
		\noalign{\vskip 1mm}
		& \texttt{onRod(0,0,[1])} & $P_{[1]}$ lies on the x-axis\\
		\noalign{\vskip 1mm}
		& \texttt{onRod(perp(0,0,i),0,[1])} &  $P_{[1]}$ has coordinates $(x_1,0)$\\
		\noalign{\vskip 1mm}
		& \texttt{dist(sum([0],j),0,0,[1])} & the abscissa of $P_{[1]}$ has to be the sum of the abscissae of $P_{[0]}$ and $P_j$\\
		\noalign{\vskip 1mm}
		& \texttt{return([0])} & the sought point $P_{[0]}$ has coordinates $(x_1-x_2,0)$\\
		\noalign{\vskip 1mm}
	\end{tabular}
\end{code}
\begin{proof}
	Consider $P_{[0]}, P_{[1]}$ on the x-axis, constrain $P_{[1]}$ to have the same abscissa of $P_j$ (thus $P_{[1]}=(x_1,0)$). Using Problem \ref{sum}, we can constrain $P_{[1]}$ to have as abscissa the sum of the abscissae of $P_{[0]}$ and $P_i$: hence $P_{[0]}=(x_1-x_2,0)$.
\end{proof}

\begin{prob}[multiplication]\label{mult}
	Given two points of Cartesian coordinates $(x_1,y_1)$ and $(x_2,y_2)$, construct a point of coordinates $(x_1\cdot x_2,0)$.
\end{prob}
\begin{code}
    Let $(0,0),(1,0),(x_1,y_1), (x_2,y_2)$ respectively be $P_0, P_1, P_i, P_j$, and let the x-axis be $r(0,0)$. The subroutine return the index of the point $(x_1\cdot x_2,0)$.
    
    	\begin{tabular} {p{0.1cm} p{7cm} p{7cm}} %{ll} 
		\textbf{\texttt{mult(i,j)}} &	& signature of the subroutine\\
		\noalign{\vskip 1mm}
		& \texttt{onRod(parall(0,0,inv(j)),0,[0])} & a new point $P_{[0]}$ lies on the rod parallel to the x-axis passing through $(y_2,x_2)$\\
		\noalign{\vskip 1mm}
		& \texttt{onRod(perp(0,0,1),0,[0])} & $P_{[0]}$ is constrained to have $1$ as abscissa\\
		\noalign{\vskip 1mm}
		& \texttt{onRod([0],0,0)} & the origin lies on the rod $r([0],0)$\\
		\noalign{\vskip 1mm}
		& \texttt{onRod(perp(0,0,i),0,[1])} & a new point $P_{[1]}$ lies on a rod perpendicular to the x-axis passing through $P_i$\\
		\noalign{\vskip 1mm}
		& \texttt{onRod([0],0,[1])} & the point $P_{[1]}$ is constrained to lie on the rod passing through the origin and $(1,x_2)$\\
		\noalign{\vskip 1mm}
		& \texttt{onRod(parall(0,0,[1]),0,[2])} & $P_{[2]}$ is constrained on the rod parallel to the x-axis passing through $(x_1,x_1\cdot x_2)$\\
		\noalign{\vskip 1mm}
		& \texttt{onRod(perp(0,0,0),0,[2])} & $P_{[2]}$ has coordinates $(0,x_1\cdot x_2)$\\
		\noalign{\vskip 1mm}
		& \texttt{return(inv([2]))} & the sought point is the one obtained inverting abscissa and ordinate of $P_{[2]}$ \\
		\noalign{\vskip 1mm}
	\end{tabular}
\end{code}
\begin{figure}
	\center \includegraphics[width=.55\textwidth] {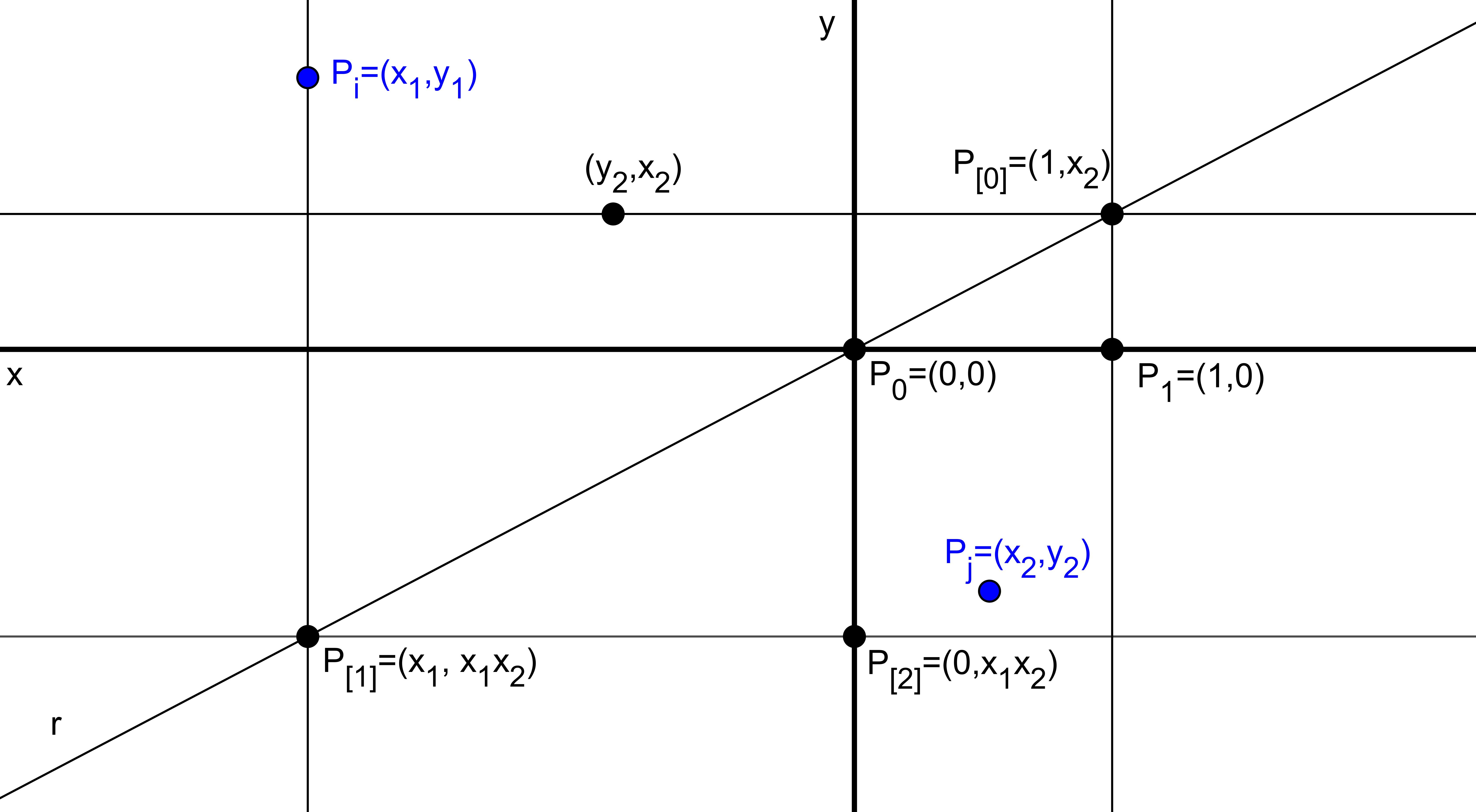}
	\caption[Construction for the multiplication]{Construction for the multiplication of the abscissae of $(x_1,y_1)$ and $(x_2,y_2)$. To obtain $(x_1\cdot x_2,0)$ we can invert the coordinates of $P_{[2]}=(0,x_1\cdot x_2).$ }\label{mult_rod}
\end{figure}
\begin{proof}
	As it happens in Descartes' interpretation of multiplication (the length $c=a\cdot b$ is given by the proportion $a:c = 1:b$), we need to use the unit length. 
	According to Problem \ref{inverse}, construct the point $(y_2,x_2)$. Considering the intersection of the rod parallel to the x-axis through $(y_2,x_2)$ and the rod parallel to the y-axis through $(1,0)$, we obtain the point of coordinates $(1,x_2)$.
	We can introduce the rod $r$ joined in $(1,x_2)$ passing through the origin $(0,0)$. As visible in Fig. \ref{mult_rod}, the intersection of $r$ with the rod parallel to the y-axis passing through $(x_1,y_1)$ determines the point $(x_1, x_1\cdot x_2)$. If we project it on the y-axis we obtain $(0, x_1\cdot x_2)$, that for Problem \ref{inverse} gives us the wanted $(x_1\cdot x_2,0)$.
\end{proof}

The possibility of performing addition and multiplication with algebraic machines is useful to analytically define the behaviour of algebraic machines and general TMMs in section \ref{manifolds}.

\section{Differential algebra (symbolic computation)}\label{differential_algebra}
In Cartesian geometry, polynomial algebra is used as finite tool for analysis. %, while geometric constructions compose the synthetic part. This merging allows the possibility of characterizing curves with algebraic tools. 
In the proposed differential extension, we substitute polynomials with differential polynomials, machines for algebraic constructions 
(algebraic machines) with TMMs, and algebraic curves with manifolds of zeros of differential polynomials. In this section, we delve deeper into the analytical counterpart of TMMs, the \textit{differential algebra}, specifically \textit{differential elimination}. The peculiarity of this approach is that it is algorithmically implementable (it is part of {computer algebra}): its finite symbolic manipulation does not need any reference to infinitary objects (as it happens in {infinitesimal calculus}).
These algebraic tools allow answering some questions about TMMs in section \ref{diff_problems}.
%\vspace{\vDist}

{Differential algebra} started with \cite{Rit1932}, where Ritt introduced suitable algebraic tools for differential equations. These results have been reformulated in more and more algebraic way in \cite{Rit1950} and later in \cite{Kol1973}.
Under both Ritt and Kolchin, basic differential algebra was developed from a constructive view point and the foundation they built has been advanced and extended to become applicable in symbolic computation, mainly thanks to the passage from old constructive methods (Ritt-Seidenberg algorithm of \cite{Sei1956}) to more recent computational complexity optimizations with Gr\"obner bases-like approach (firstly introduced in \cite{Car1989})\footnote
{Such ideas have been developed in Computer Algebra Systems, e.g. in the  package \emph{DifferentialAlgebra} (see \url{http://www.maplesoft.com/support/help/maple/view.aspx?path=DifferentialAlgebra}).
This package is based on the software $BLAD$ (standing for \emph{Biblioth\`eques Lilloises d'Alg\`ebre Diff\'erentielle}), developed in the \emph{C} programming language by F. Boulier. While \emph{DifferentialAlgebra} is a package for the commercial software \textit{Maple}, the $BLAD$ software is freely available online at \url{http://www.lifl.fr/~boulier/pmwiki/pmwiki.php?n=Main.BLAD}.
Furthermore, even though yet under construction, a differential-algebra experimental package for the open software \emph{SageMath} can be freely downloaded at the link \url{https://trac.sagemath.org/ticket/13268}. 
Another free alternative is \emph{ApCoCoA}, available at \url{apcocoa.org} (for our purposes, we have to cite the package \emph{diffalg}), a software package based on \emph{CoCoA}, \url{http://cocoa.dima.unige.it}. 
}. For a brief introduction to these computational problems and the relative historical evolution see \cite[pp. 110--111]{Bou2007}.
\\

%\subsection{Differential algebra}\label{diff_alg_theory}
The aim of differential algebra is to provide an algebraic theory for differential equations both ordinary or with partial derivatives. In particular, its tools and notations are an extension of commutative algebra. %As a reference see the ***  yet  \cite{Rit1950} (the main preliminary definitions about ODEs are given in Ch. 1) or new *** 
To give a short introduction to differential algebra, we recall \cite[pp. 112--116]{Bou2007} because of the clarity, the brevity, and the adherence with our aims (according to the kind of constraints obtained by TMMs, we are only interested in ordinary differential equations). %To begin I have to give some definitions beyond algebraic tools.
%\vspace{\vDist}

Similarly to classic algebraic geometry, we consider \emph{differential polynomials}. 
In this case, out of the binary operations of sum and multiplication, we have to introduce the unitary operation of \emph{derivation}. The derivation must be distributive over addition (for every $a,b\in R$ it holds $D(a+b)=D(a)+D(b)$) and must obey the product rule (also called \textit{Leibniz rule},
$D(ab)=D(a)b + a D(b)$). Adopting the standard notation for ordinary derivatives, from now on we write $a'$ instead of $D(a)$.

For our purposes, the coefficients of such differential polynomials are rational numbers. Specifically, given a finite set $U$ of variables, named \emph{differential indeterminates}, a differential polynomial on $U$ is a polynomial on $U$ and the relative derivatives $\Theta U$ (if $U=\{x_1,x_2\}$, $\Theta U = \{x_1, x_1', x_1'',  \ldots, x_2, x_2', x_2'', \ldots \}$ and an example of differential polynomial is $\frac{1}{3} x_2'^2-5x_1^3x_1''^2x_2 + \frac{2}{13} x_1''^2 x_1'''x_2 x_2'^4$). In this paper, differential indeterminates can be considered as real functions depending on the single independent variable $t$, which we may think as the {time}. We also refer to differential indeterminates as \emph{dependent variables}. Considering by $\mathbb Q\{U\}$ the set of all the differential polynomials with rational coefficients on the variables $U$, it is a \textit{ring} (i.e. a mathematical structure equipped with sum and multiplication and satisfying certain properties: for an introduction to algebraic topics in the non-differential case see \cite{Lan2005}) with a derivation, thus a \textit{differential ring}.

The set of all the polynomials solving some polynomial conditions is captured by a structure named \textit{ideal}. %(for an introduction to such algebraic topics see \cite{Lan2005}). 
Given an ideal $I$ of polynomials: $I$ contains the null polynomial; the sum of two polynomials in $I$ belongs to $I$; the multiplication of a polynomial in $I$ with another polynomial (not necessarily in $I$) still belongs to $I$.
In algebraic geometry, the set of polynomials satisfied by a given polynomial system forms an ideal that is also radical: an ideal $I$ is said to be \emph{radical} if $a\in I$ whenever there exists some $p\in \mathbb N$ so that $a^p\in I$. %, i.e. the radical of $ I$ is the set of all the ring elements whose power belongs to $ I$. 
%Looking for something similar about differential equation, we can introduce the differential extension of these concepts.
%
In the differential case, an ideal $ I$ is a \emph{differential ideal} if it is stable under derivation, which is $a'\in I$, for all $a\in I$. Besides, exactly as in non-differential case, a differential ideal $I$ is \emph{radical} if $a^p\in I$ implies $a\in I$ for any integer $p>0$.

The set of all the ``{differential and algebraic consequences}'' of the differential polynomials in a system $\Sigma$ is the radical differential ideal generated by $\Sigma$. %, denoted by $\sqrt{[\Sigma]}$.
{To observe how radical differential ideal are related to the study of the solution of a system of differential polynomials, 
	consider $x'^2-4x=0$ ($x$ is the only dependent variable). The analytical solutions are the zero function $x(t)=0$ and the family of parabolas $x(t)=(t+c)^2$ where $c$ is an arbitrary constant. These are also solutions of all the derivatives of $x'^2-4x=0$ (i.e. {$2x'(x''-2)=0, 2x'x''+2x''(x''-2)=0, \ldots$}) and of 
	every differential polynomial a power of which is a finite linear combination of the derivatives with arbitrary differential polynomials as coefficients, i.e. every element of the radical of the differential ideal generated by $x'^2-4x$. 
%	\label{notprimeDiffPoly_note}
}
%In general, given a differential system $\Sigma$ (i.e. a system of differential polynomials), instead of studying directly the solutions of $\Sigma=0$ we will go on to inspect the radical differential ideal generated by $\Sigma$, i.e. the intersection of all the radical differential ideals containing $\Sigma$. 

When we are interested in a restriction of all the variables, we take a \textit{projection} of an ideal, and the operation is called \textit{elimination}. The projection of an ideal is still an ideal.
%\subsection{Differential elimination}\label{diff_elimination}
%Differential elimination is an important application of differential algebra, and is totally implemented in algorithms (is part of computer algebra). 
In the elimination process for both purely algebraic and differential systems, we need as input a system of (differential) polynomials and an order defining the priority of the variables to be eliminated, so called \textit{ranking}. The output is a system (or a family of systems when splitting is necessary) that is equivalent to the input system restricted on some variables. %The role of ranking is to decide the priority of the variables to be eliminated.
%To present the main algorithm of differential elimination, called \emph{Rosenfeld-Gr\"obner}, we previously have to introduce \emph{ranking}. 
Even if in practice the worst case complexity of the algorithms makes problems untreatable, in principle elimination is always possible.
\\

%\subsubsection{Rosenfeld-Gr\"obner algorithm}
For the differential elimination (and in general to decide the membership of a differential polynomial in a radical differential ideal) the key algorithm is \emph{Rosenfeld-Gr\"obner} one. As readable in the description of the command in Maple\footnote
{Cf. \url{https://www.maplesoft.com/support/help/Maple/view.aspx?path=DifferentialAlgebra/RosenfeldGroebner}}, given a system $\Sigma$ containing differential-polynomial equations and inequations, the algorithm splits the given system into other systems defined by certain equations and inequations (the result and the number of cases depend on the ranking of the variables). %, each one specified by their equations and inequations.
%In each of the cases returned, there is no one equation that is the consequence of the other equations and all the \textbf{integrability conditions???} are taken into account. 
%The relation between the $\Sigma$ and the systems of each of theses cases is t
The solutions of $\Sigma$ are given by the union of the general solutions of each of the returned systems.
Every system is an algebraic structure named \emph{differential regular chain},
and  the radical differential ideal generated by $\Sigma$ is the intersection of all the obtained differential regular chains.% of each case. %NOTE: the number of cases returned and the equations and inequations that appear in these cases 
%Note that the result depend on how the dependent and independent variables are ranked. % inside R.
\\

About the differential ranking, %\subsubsection{Differential ranking}
if $U$ is a finite set of dependent variables, a \emph{ranking} over $U$ is a total ordering over the set $\Theta U$ of all the derivatives of the elements of $U$ which satisfies, for all $a,b\in\Theta U$,
$a'>a$ and $a>b\Rightarrow a'>b'$. % \qquad \mbox{for all } a,b\in\Theta U$$
When $U=\{a\}$ (there is a unique dependent variable), there exists only one ranking: $\cdots > a''>a'>a$.
The choice of the ranking is non-trivial when we have more dependent variables. For our purposes, we have to introduce the \textit{orderly} and \textit{eliminating} rankings.%\footnote{Study and classification of general rankings are examined in \cite{CFS1993}, \cite{RR1997}.}.

A ranking is said to be \emph{orderly} if, for every $a,b\in U$ and for every positive integer value of $i$ and $j$, $i>j\Rightarrow a^{(i)}>b^{(j)}$. 
%This means that given $U=\{a,b\}$, the two possible orderly rankings will be 
%$$\cdots>b''>a''>b'>a'>b>a \qquad\mbox{and}\qquad \cdots>a''>b''>a'>b'>a>b.$$

If $U$ and $V$ are two finite sets of differential variables, one denotes $U \gg V$ every ranking so that any derivative of any element of $U$ is greater than any derivative of any element of $V$. Such rankings are said to \emph{eliminate} $U$ with respect to $V$. 
%Considering $U=\{a\}$ and $V=\{b\}$, the order eliminating $a$ will be
%$$\cdots>a''>a'>a>\cdots>b''>b'>b.$$
%\vspace{\vDist}
%Let $f$ be a differential polynomial in $K\{U\}$ that is not in $K$ (i.e. $f$ has to really depend on some differential indeterminates or their derivatives). 

Fixed a ranking, the \emph{leader} is the highest ranking derivative appearing in a differential polynomial.
Thus, given $\frac{1}{3} x_2'^2-5x_1^3x_1''^2x_2 + \frac{2}{13} x_1''^2 x_1'''x_2 x_2'^4$, with any orderly ranking the leader is $x_1'''$ (there are no $x_2$ with derivative more than 1). We have the same leader with the ranking eliminating $x_1$. On the contrary, with the ranking eliminating $x_2$ the leader is $x_2'$.
\\

%\subsection{Solved and open problems}
To sum up, recalling \cite[pp. 41--42]{Hub2003}, given a system of differential polynomials $\Sigma$ in the dependent variables $x_1,\ldots,x_n$, %it is possible to test the membership to the radical differential ideal generated by $\Sigma$. 
%This is algorithmically allowed by the symbolic methods seen above. 
%Recalling \cite[pp. 41--42]{Hub2003}, 
with an appropriate choice of the ranking we can:
\begin{itemize}
	\item check whether a differential polynomial is a solution of $\Sigma=0$;
	
	\item find the differential polynomials satisfied by the solutions of $\Sigma=0$ in a subset of the dependent variables (we can obtain the equations governing the behavior of the components $x_1, \ldots, x_m$, with $m<n$);
	
	\item find the lower order differential polynomials satisfied by the solutions of $\Sigma=0$ (in particular, we can inquire whether the solutions of the system are	constrained by purely algebraic equations).
\end{itemize}

Even though very powerful, the introduced methods do not provide the answer to all the interesting questions of differential algebra. 
With regard to initial value problems from a computational symbolic perspective,\footnote
{For example, we are interested in the following problem: given two TMMs with their relative initial configurations, are their behaviors equivalent? Analytically, the question arises: given two systems of differential equations with the relative initial conditions, are the systems equivalent? We are looking for an algorithm to symbolically solve this problem. Differential algebra language does not permit even to express this problem because we need to explicitly state the relation between the dependent variables and the independent one (to pose the initial condition).
\label{note:initial_values}}
a lot left to do. Even though \cite{PS2007} and the approach proposed by Markus Rosenkranz with
regard to symbolic methods for (linear) boundary problems (e.g. \cite{RRT2012}), at my knowledge the symbolic solution of general initial value problems is far away from being solved.

\section{Geometry (machines behavior)}\label{manifolds}
To describe the behavior defined by TMMs, we adopt the \textit{behavioral approach} of mathematical models \cite[pp. 1--8]{PW1998}. The main difference between the behavioral approach and the input/output one is that in the first one we consider all the variables without the need of distinguishing them between input and output. The advantage of missing this distinction comes from the fact that considering interconnection between components (the so-called \textit{feedback}), it is generally hard or impossible to understand which variables are inputs and which ones are outputs. TMMs were firstly introduced adopting the input/output approach \cite{Mil2012}, while in this paper we use the behavioral approach to analytically study the machines with differential algebra instead of classical infinitesimal calculus.

A mathematical model posits that some things can happen, while others cannot.
We can formalize this idea by stating that a mathematical model selects a certain subset from a \textit{universum} of possibilities. This subset consists of occurrences that the model allows, that it declares possible. We can refer to the subset in question as the \textit{behavior} of the mathematical model.
Such exclusion laws are usually expressed in terms of equations in some variables. 
The behavior obtained considering all the variables is called \textit{total}. If we want to restrict only to the some variables, we speak of \textit{restricted behavior} (or simply behavior, restricted may be implicit). With this approach two machines are equivalent if they have the same behavior.
%\vspace{\vDist}
Before deepening the exploration of TMMs, we begin to explore the behavior of algebraic machines.

\begin{prop}\label{prop:total_algebraic}
The total behavior of algebraic machines with $n$ points and $m$ rods is a real algebraic set with integer coefficients in ${2(n+m)}$ variables.
\end{prop}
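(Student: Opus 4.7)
The plan is to read off the $2(n+m)$ variables directly from the analytic conversion of the construction language already given in Section~\ref{language}, and then verify that every constraint introduced by the allowed instructions (without \texttt{wheel}, since we are working with algebraic machines) is a polynomial equation with integer coefficients.

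First I would set up the coordinate list. Each of the $n$ points $P_k$ contributes the pair $(x(P_k),y(P_k))$, giving $2n$ real variables. For each of the $m$ rods $r(k,i)$ the orientation is encoded by the auxiliary point $Q_{k,i}$ introduced in Section~\ref{language}, contributing $(x(Q_{k,i}),y(Q_{k,i}))$; this is another $2m$ variables. Together they give exactly $2(n+m)$ coordinates, which are the variables of the ambient affine space $\mathbb{R}^{2(n+m)}$ in which the total behavior lives.

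Next I would list the defining equations and check that their coefficients are integers. The two given fixed points impose $x(P_0)=0$, $y(P_0)=0$, $x(P_1)=1$, $y(P_1)=0$, obviously integer. Each rod $r(k,i)$ contributes the unit-length normalization $(x(P_k)-x(Q_{k,i}))^2+(y(P_k)-y(Q_{k,i}))^2=1$, also integer (in fact with coefficients in $\{\pm1\}$ and the constant $1$). An \texttt{onRod(k,i,j)} instruction becomes the bilinear determinantal equation $(x(P_k)-x(Q_{k,i}))(y(P_j)-y(Q_{k,i}))=(x(P_j)-x(Q_{k,i}))(y(P_k)-y(Q_{k,i}))$, again with coefficients $\pm 1$. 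A \texttt{dist(k,i,n,j)} instruction becomes the two linear equations $x(P_j)=x(P_k)+n(x(Q_{k,i})-x(P_k))$ and $y(P_j)=y(P_k)+n(y(Q_{k,i})-y(P_k))$, whose coefficients are $n\in\mathbb{Z}$ and $\pm 1$. Crucially, since the machine is algebraic, there is no \texttt{wheel} instruction, so no differential equation enters the system; every constraint is therefore a polynomial relation in the $2(n+m)$ coordinates with integer coefficients.

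Finally I would conclude: the total behavior is by definition the subset of $\mathbb{R}^{2(n+m)}$ on which all of the above polynomial equations vanish simultaneously, which is the common zero locus of a finite family of polynomials in $\mathbb{Z}[x(P_0),y(P_0),\dots,x(Q_{k,i}),y(Q_{k,i})]$; this is precisely what it means for the behavior to be a real algebraic set with integer coefficients. I do not anticipate a real obstacle here; the only point requiring a moment of care is bookkeeping, namely making explicit that each rod forces the introduction of its auxiliary orientation point $Q_{k,i}$ (so that $m$ rods really produce $2m$ fresh variables, not fewer) and observing that the algebraic-machine hypothesis is exactly what rules out derivatives and keeps us inside the polynomial world.
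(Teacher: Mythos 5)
Your proposal is correct and follows essentially the same route as the paper's own proof: count the $2n$ point coordinates plus $2m$ auxiliary orientation points $Q_{k,i}$, and observe that every instruction of an algebraic machine translates into a polynomial equation with integer coefficients, so the total behavior is a real algebraic set in $\mathbb{R}^{2(n+m)}$. Your version is simply more explicit about listing the individual equations, which the paper leaves to the reader by reference to the analytic conversions in the language section.
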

\begin{proof}
    First of all, for algebraic machines we can consider as behavior the set of the configurations allowed by the constraints of the machine, i.e. the possible contemporary positions of the various points. Being on a plane, each of the two coordinates of any point has to be a real value.
    Given the possibility of translating \texttt{onRod} and \texttt{dist} in algebraic polynomials with integer coefficients (cf. section \ref{language}), we can consider as variables the coordinates of the $n$ points $P_i$. At them we have to add, for each of the $m$ rods, the point $Q_{j,k}$. Therefore, in the universum $\mathbb R^{2(n+m)}$, the behavior (the possible configurations) is the solution of a system of polynomials with integer coefficients on $2(n+m)$ variables.
\end{proof}
Note that, from an algebraic perspective, it is better to consider rational coefficients instead of integer ones to allow the existence of the inverse of every non-zero coefficient (to define a so called \textit{polynomial ring}). However, multiplying by the  minimum common multiple of all the denominators, every polynomial with rational coefficients is equivalent to another one with only  integer coefficients, hence we continue considering only integer coefficients.
\begin{prop}\label{prop:algebraic}
Given any polynomial $p$ with integer coefficients in $n$ real variables, we can consider an algebraic machine having as restricted behavior exactly the zero set of $p$.
\begin{proof}
    Consider $p$ on $x_1,\ldots,x_n$ with integer coefficients, let such coefficients be $c_1, \ldots, c_k$. Introduce the $n$ moving points $P_i=(x_i,0)$  by \texttt{onRod(0,0,i)} (keep in mind that the x-axis is $r(0,0)$) and the $k$ fixed points $P_{n+j}=(c_j,0)$ by \texttt{dist(0,0,$c_j$,n+j)}. The polynomial $p$ is made up by sums and multiplications of $x_i$ and $c_j$. Thus, using the code of Problems \ref{sum} and \ref{mult}, we can construct the point $P_h=(p,0)$ (with a certain index $h$) in function of $x_1,\ldots, x_n$. To conclude, we can impose $p$ to be constantly equal to 0 by \texttt{dist(0,0,0,h)}.
    Restricting the behavior to the abscissae of $P_1, \ldots, P_n$, the proposed machine provides exactly the sought solution of the polynomial $p$.
\end{proof}
\end{prop}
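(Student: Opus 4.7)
The plan is to build the polynomial $p$ as the abscissa of a single constructed point, using the algebraic machines we already assembled for sum and multiplication, and then force that abscissa to be zero. Concretely, I would begin by placing moving points $P_1,\ldots,P_n$ on the x-axis with \texttt{onRod(0,0,i)} to represent the variables $x_1,\ldots,x_n$, and place fixed anchor points on the x-axis for each integer coefficient $c$ appearing in $p$, via \texttt{dist(0,0,c,\cdot)}. These are the ``inputs and constants'' of the computation.

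Next I would induct on the syntactic structure of $p$. Any integer-coefficient polynomial in $x_1,\ldots,x_n$ admits an expression tree whose leaves are variables or constant points and whose internal nodes are binary sums and products. The constructions from Problem \ref{sum} and Problem \ref{mult} take two points $P_i,P_j$ with abscissae $a,b$ and produce new points on the x-axis with abscissae $a+b$ and $a\cdot b$ respectively, and these subroutines are composable because both their inputs and their outputs live on the x-axis. So walking over the expression tree of $p$ and invoking \texttt{sum} and \texttt{mult} at each internal node yields, after finitely many calls, an index $h$ such that the abscissa of $P_h$ equals $p(x_1,\ldots,x_n)$ in every valid configuration of the machine. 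Finally I would add the single constraint \texttt{dist(0,0,0,h)}, which pins $P_h$ to the origin and hence enforces $p(x_1,\ldots,x_n)=0$.

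To check correctness of the restricted behavior I would argue both inclusions. For the forward direction, any configuration of the full machine gives in particular values $x_1,\ldots,x_n$ for the abscissae of $P_1,\ldots,P_n$, and by the correctness of the \texttt{sum} and \texttt{mult} subroutines the abscissa of $P_h$ must equal $p(x_1,\ldots,x_n)$; the added \texttt{dist} constraint then forces this value to be $0$. Conversely, for any $(x_1,\ldots,x_n)\in\mathbb{R}^n$ with $p(x_1,\ldots,x_n)=0$, I would construct an explicit configuration by positioning each auxiliary point exactly where the geometry of the sum/multiplication diagrams prescribes; all constraints are satisfied, and the projection onto $(x_1,\ldots,x_n)$ recovers the chosen tuple.

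I expect the only nontrivial point to be the degenerate cases in the multiplication construction. When some intermediate factor is $0$ or $1$, certain auxiliary rods in Fig. \ref{mult_rod} coincide with coordinate axes and some auxiliary points collapse, so I would have to verify that the constraints still admit a (possibly non-unique) configuration with the correct product as output; since the constraints are polynomial and the diagram is continuous, this amounts to checking that the equations remain consistent at the limit, which they do. Given that, the construction uses only finitely many rods, carts and joints, so by Proposition \ref{prop:total_algebraic} it is indeed an algebraic machine, and its restricted behavior on the coordinates of $P_1,\ldots,P_n$ is precisely the zero set of $p$.
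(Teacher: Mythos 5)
Your proposal is correct and follows essentially the same route as the paper: variables as moving points on the x-axis, integer coefficients as fixed points placed by \texttt{dist}, composition of the \texttt{sum} and \texttt{mult} subroutines along the expression tree of $p$, and a final \texttt{dist(0,0,0,h)} to pin the result to the origin. Your extra care about both inclusions of the restricted behavior and the degenerate configurations of the multiplication diagram goes beyond what the paper writes down, but it does not change the argument.
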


Note that, once physically posed the constraints, not every configuration is always reachable given a certain initial condition. That happens because real algebraic sets (i.e. the solution of systems of algebraic polynomials on real variables) are not always made up by connected components (consider the hyperbola $xy-1=0$: it is made up by two unconnected branches). However, every connected branch of an algebraic set can be continuously traveled by a machine given suitable initial conditions.

%Without deepening that, in algebra the set of all the polynomials solving some polynomial conditions is captured by a structure named ``ideal'' (for an introduction to such algebraic topics see \cite{Lan2005}). Given an ideal $I$ of polynomials: $I$ contains the null polynomial; the sum of two polynomials in $I$ belongs to $I$; the multiplication of a polynomial in $I$ with another polynomial (not necessarily in $I$) still belongs to $I$. When we are interested in a restriction of all the variables, we take a ``projection'' of an ideal, and the operation is called ``elimination.'' The projection of an ideal is still an ideal.

    %The universum of algebraic machines is a subset of $\mathbb R^{2n}$ (considering a planar machine with $n$ points of junction, the behaviour can be defined by the coordinates of such points). As evinced in section \ref{language}, every constraint on the $n$ points is translatable in polynomials on the other variables, so the full behavior is made up by a real algebraic set (i.e. has to solve a polynomial system). In case we are not interested in all the variables but just in some of them (e.g. we want to define what a certain point traces), we consider as manifest the variables we are interested in, and so the restricted behavior will be a projection of the full behavior. 
Now we have to manage the passage from total to restricted behavior. The projection of any real algebraic set (i.e. the set obtained eliminating some of the original real variables) is named semi-algebraic set (and every real semi-algebraic one can be obtained as the projection of a real algebraic set) \cite{Bas2006}. %\cite{BCR1998}
Any semi-algebraic set can be represented as finite union of sets each one defined by some polynomial equations and inequalities.

Projection of real algebraic sets can be performed by computer algebra software like Maple (that can be used also to analyse the behavior of general TMMs)\footnote
{To perform such projection one can use the \textit{RegularChains} package \url{https://www.maplesoft.com/support/help/maple/view.aspx?path=RegularChains} (out of clarifying how to use the package, a mathematical definition of regular chains is provided), with also the subpackage \textit{SemiAlgebraicSetTools} for the function \textit{Projection} \url{https://www.maplesoft.com/support/help/maple/view.aspx?path=RegularChains\%2fConstructibleSetTools\%2fProjection}.
To clarify the ideas, it follows the code to get the behavior of the point $P_4$ in the example \ref{linkages} at page \pageref{linkages} (see Fig. \ref{Watt} for the machine).

	\begin{tabular} {p{8cm} p{5.5cm}} %{ll} 
		\texttt{with(RegularChains): with(SemiAlgebraicSetTools)}	& load the packages\\
		\noalign{\vskip 1mm}
		\texttt{R:= PolynomialRing([x2, y2, x3, y3, x4, y4])} & define the variables of our polynomials with a given order\\ 
		\noalign{\vskip 1mm}
		\begin{tabular}{l}
		     \texttt{eq1:= x2\^{}2+y2\^{}2=1}\\ \texttt{eq2:= (x3-1)\^{}2+y3\^{}2=1}\\ \texttt{eq3:= x2-2*x4+x3=0}\\ \texttt{eq4:= y2-2*y4+y3=0}\\ \texttt{eq5:= (x2-x3)\^{}2+(y2-y3)\^{}2=4}
		 \end{tabular} & define the various equations\\
		\noalign{\vskip 1mm}
		\texttt{proj:= Projection([eq1,eq2,eq3,eq4,eq5],2,R)} & compute the projection of the equations on the last two variables considering the given order
		\\ 
		\noalign{\vskip 1mm}
		\texttt{Display(proj,R)}     & show the various components of the projection
		\\ 
		\noalign{\vskip 1mm}
	\end{tabular}
	
Thus, non considering the imaginary results and merging the singular solutions in the general one, we have that the locus of the point $P_4$ is given by the equation $4x^6-12x^5+(13+12y^2)x^4+(-6-24y^2)x^3+(18y^2+12y^4+1)x^2+(-12y^4-6y^2)x-3y^2+5y^4+4y^6 = 0$. 

Neglecting practical computer limitation, eliminations are always theoretically computable. For example one could prove by computer algebra that the algebraic machine introduced for the sum in problem \ref{sum} works properly without any geometrical consideration. One should translate all the instructions in algebraic equations and then consider the projection on three variables: the two addends (the abscissae of $P_i$ and $P_j$, i.e. $x_1$ and $x_2$) and the variable that should give the result of the operation (the abscissa of $P_{[1]}$, denote it by $t$). The projection has to provide an equation equivalent to $x_1+x_2-t=0$. Obviously, similar reasonings are obtainable for the operations of difference and multiplication, and, suitably interpreting the geometrical properties in analytic terms, also for all the other problems solved in the section \ref{Algebraic_machines}.
\label{note:realAlg}}.

We complete the section with the characterization of the behavior of algebraic machines.
\begin{thm}[Algebraic universality]\label{alg_universality}
    Considering $n$ variables, the behavior of algebraic machines coincides with any semi-algebraic set with integer coefficients.
\end{thm}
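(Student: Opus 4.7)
The proof has two inclusions. For the forward direction, every algebraic machine with $n$ points and $m$ rods has, by Proposition \ref{prop:total_algebraic}, a total behavior that is a real algebraic set with integer coefficients sitting in $\mathbb{R}^{2(n+m)}$. The restricted behavior to any subset of coordinates is, by definition, the projection of this algebraic set onto those coordinates. By the Tarski--Seidenberg theorem (cf.\ \cite{Bas2006}), the projection of a real algebraic set is semi-algebraic, and the rational coefficients produced by the elimination procedure can be cleared to integers, so the restricted behavior lies in the claimed class.

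For the reverse inclusion I would rely on the standard structural description: a semi-algebraic $S \subseteq \mathbb{R}^n$ with integer coefficients is a finite union of basic pieces of the form $\{p_1 = \cdots = p_k = 0,\, q_1 > 0, \ldots, q_l > 0,\, r_1 \geq 0, \ldots, r_s \geq 0\}$. The key step is to remove the inequalities by introducing auxiliary real variables: the condition $q > 0$ is equivalent to the existence of a real $y$ with $qy^2 = 1$, while $r \geq 0$ is equivalent to the existence of a real $z$ with $r = z^2$. After these substitutions, each basic piece becomes the projection onto the original $x$-coordinates of a real algebraic set with integer coefficients living in a higher-dimensional space $\mathbb{R}^{n+m_i}$.

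To merge the pieces, I would pad each with dummy auxiliary variables until all basic pieces sit in a common ambient $\mathbb{R}^{n+m}$. Each piece is then cut out by finitely many polynomial equations, which can be collapsed into a single polynomial $F_i$ via the sum-of-squares trick ($F_i = \sum_\alpha g_\alpha^2$, where the $g_\alpha$ are the defining polynomials); the union of all pieces is then cut out by the single polynomial $F_1 F_2 \cdots F_N$. Thus $S$ is the projection onto $\mathbb{R}^n$ of a real algebraic set $V \subseteq \mathbb{R}^{n+m}$ defined by one integer-coefficient polynomial. Applying Proposition \ref{prop:algebraic} to that polynomial produces an algebraic machine whose restricted behavior to the $n+m$ distinguished abscissae is exactly $V$; restricting further to the original $n$ abscissae realises the projection, i.e.\ $S$ itself.

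The main obstacle is conceptual rather than computational: one must check that the encodings $qy^2 = 1$ and $r = z^2$ faithfully reproduce strict and non-strict inequalities, and verify that after finite union and the sum-of-squares/product steps the integer-coefficient bookkeeping is preserved. Once these points are secured, everything else reduces to a direct appeal to Proposition \ref{prop:algebraic} and to Tarski--Seidenberg, and no new construction of algebraic machines beyond what is already developed in section \ref{Algebraic_machines} is required.
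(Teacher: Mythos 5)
Your proposal is correct and follows essentially the same route as the paper: Proposition \ref{prop:total_algebraic} plus projection for one inclusion, and for the other the encoding $q\cdot t^2=1$ for strict inequalities, the sum-of-squares collapse of conjunctions, the product over the pieces of the union, and a final appeal to Proposition \ref{prop:algebraic}. Your extra care with non-strict inequalities via $r=z^2$ and with padding the auxiliary variables is a minor refinement of the same argument, not a different approach.
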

\begin{proof}
    We split the theorem in two: 1. any machine behavior is a semi-algebraic set; 2. for any semi-algebraic set, we can consider a machine with this behavior.
    
    1. By Prop. \ref{prop:total_algebraic}, the total behavior is a real algebraic set. Thus, restricting to $n$ variables, the behavior has to be a semi-algebraic set.
    
    2. Any semi-algebraic set can be represented as finite union of sets each one defined by some polynomial equations and inequalities on $n$ variables. 
    For every inequality $q_i(x_1,\ldots,x_n)>0$, we can introduce a new variable $t_i$ and the polynomial $\tilde{q_i}(x_1,\ldots,x_n,t_i)=q_i\cdot t_i^2-1$. We can note that, posing $\tilde{q_i}=0$, we have that $q_i\neq 0$ ($q_i$ has to divide $1$) and $q_i\geq 0$ ($q_i$ multiplied by a non-negative value must give a positive value): that means that, thanks to the introduction of new variables, we can obtain the desired inequalities as projection using the new polynomials $\tilde{q_i}$.
    
    We can also consider that the system of real polynomials $p_1=\ldots=p_l=0$ is equivalent to a single polynomial $(p_1)^2+\ldots+(p_l)^2=0$. Hence, a semi-algebraic set can be considered as the projection of a finite union of sets each one defined by a polynomial in $x_1,\ldots,x_n,t_1,\ldots,t_m$: let such polynomials be $P_1,\ldots,P_k$. Finally, the union of such sets has to satisfy the polynomial $P= P_1\cdot \ldots \cdot P_k$, and the semi-algebraic set has to be the projection of $P=0$ on the variables $x_1,\ldots, x_n$. By Prop. \ref{prop:algebraic}, the zero-set of $P$ can be constructed by an algebraic machines. Thus, restricting on the variables $x_1,\ldots, x_n$, we got a machine with the sought behavior.  
    %Furthermore, if the semi-algebraic set is given as equations and inequalities, **it is constructible the corresponding machine ******
\end{proof}

%Thus, the restricted behavior of algebraic machines has to be a real semi-algebraic set. Conversely, for every real semi-algebraic set, we can construct an algebraic machine having as restricted behavior exactly the wanted semi-algebraic set\footnote
%{In section \ref{Algebraic_machines} we observed how it is possible to perform sum and multiplication of variables, so we can construct a machine solving a system of polynomials. Furthermore, any semi-algebraic set can be viewed as the projection of an algebraic one: Thus to construct the wanted semi-algebraic set as restricted behavior we simply have limited the manifest variables in the machine solving the algebraic set.}.

\subsection{Universum of TMMs}\label{almost_cycloid}
Differently from the case of algebraic machines, whose behavior is a subset of $\mathbb R^n$, the introduction of the wheel makes the previous representation ineffective. 
%TMMs have nonholonomic constraints, so families of curves, not just families of points, will make up the behavior. According to constructions seen in the historical part (cf. right of Fig. \ref{tractrix}), this distinction may appear useless, because those tractional machines define single curves: The problem can arise if the set of the reachable points is of dimension greater than one.
To evince it, we propose the example of a TMM: the set of its reachable points is a real semi-algebraic set, so it can be obtained with an algebraic machine, but we will intuitively see why its behavior is substantially different from the ones of algebraic machines.
%\vspace{\vDist}

\begin{ex}\label{example:cycloid}
Given $P_2 = (t, 0)$ moving on the abscissa and $P_{[0]} = (t, 1)$, consider $P_{[1]}$ s.t. $\overline{P_2P_{[1]}}=1$. In $P_{[1]}$ we can place a wheel so that the tangent to the curve traced by $P_{[1]}$ is always in direction of $P_{[0]}$ (see Fig. \ref{cycloid}). 
\begin{code}
    First example of a TMM that is not an algebraic machine. Consider $P_0, P_1, r(0,0)$ as usual. %NOTE: Instead of $P_3, P_4$, in the code we consider $P_{[0]}$, $P_{[1]}$.
    
    	\begin{tabular} {p{0.1cm} p{7.5cm} p{6.5cm}} %{ll} 
		& \texttt{onRod(0,0,2)} & consider $P_2$ on $r(0,0)$\\
		\noalign{\vskip 1mm}
		& \texttt{dist(perp(0,0,2),0,1,[0])} & $P_{[0]}$ is constrained to stay on the perpendicular to $r(0,0)$ passing through $P_2$, one unit away from  $P_2$\\
		\noalign{\vskip 1mm}
		& \texttt{dist(2,0,1,[1])} & a new point $P_{[1]}$ is put at distance 1 from $P_2$\\
		\noalign{\vskip 1mm}
		& \texttt{onRod([1],0,[0])} & the point $P_{[0]}$ is constrained to lie on the rod $r([1],0)$\\
		\noalign{\vskip 1mm}
		& \texttt{wheel([1],0)} & pose a wheel in $r([1],0)$ at $P_{[1]}$\\
		\noalign{\vskip 1mm}
	\end{tabular}
\end{code}
\begin{figure}
	\center \includegraphics[scale=.2] {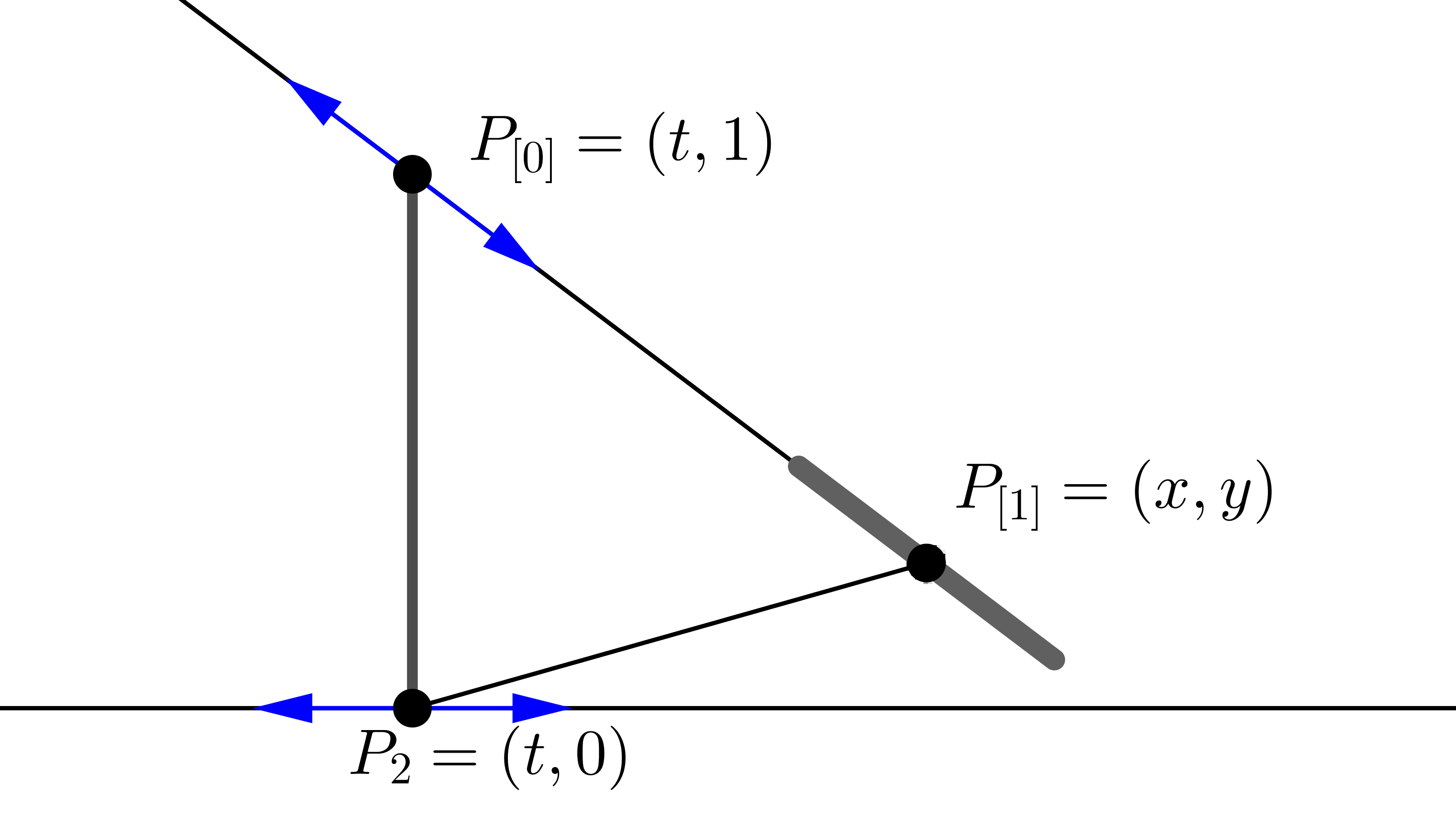}%{cycloid.pdf}
	\caption[A simple TMM]{A simple TMM (the point $P_2$ moves along a line, $P_{[1]}$ rotates around).
	%In the code, $P_3$ and $P_4$ are respectively $P_{[0]}$ and $P_{[1]}$.
	}\label{cycloid}
\end{figure}
\begin{figure}
	\center \includegraphics[scale=.9] {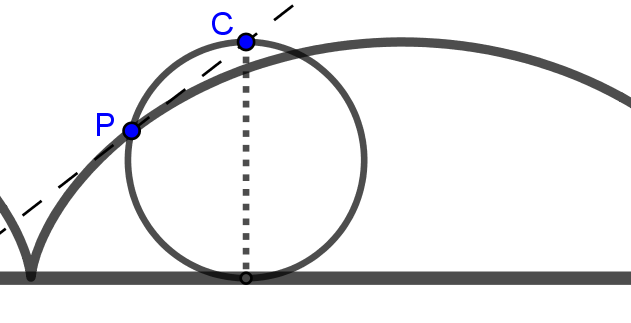}
	\caption[A property of the cycloid]{A cycloid can be traced by a point $P$ fixed on a rolling disk. Calling $C$ the top of the disk, the tangent at $P$ has to pass through $C$. }\label{cycloid-dim}
\end{figure}
\end{ex}
%To simplify the notation, call $P_3$ and $P_4$ respectively $P_{[0]}$ and $P_{[1]}$.
Note that, although by construction $P_{[0]}=(t,\pm 1)$, to avoid useless complications we consider only $P_{[0]}=(t,1)$.
While one moves $P_2$ along the abscissa, if the ordinate of $P_{[1]}$ is strictly less than 1, it has to describe an arc of cycloid, because of the geometrical property shown in Fig. \ref{cycloid-dim} (for a precise analytic proof see the section \ref{analysis_cycloid}, page \pageref{analysis_cycloid}). Remind that the cycloid is a transcendental curve, thus it cannot be traced with 1 degree of freedom by algebraic machines.
On the contrary, when $P_{[1]}$ assumes coordinates $(t, 1)$, it can move not only along a cycloid but also in a purely horizontal way, losing the uniqueness (a similar machine was introduced in \cite[pp. 10-12]{Mil2015}).
%\vspace{\vDist}

\begin{figure}
	\center \includegraphics[scale=.9] {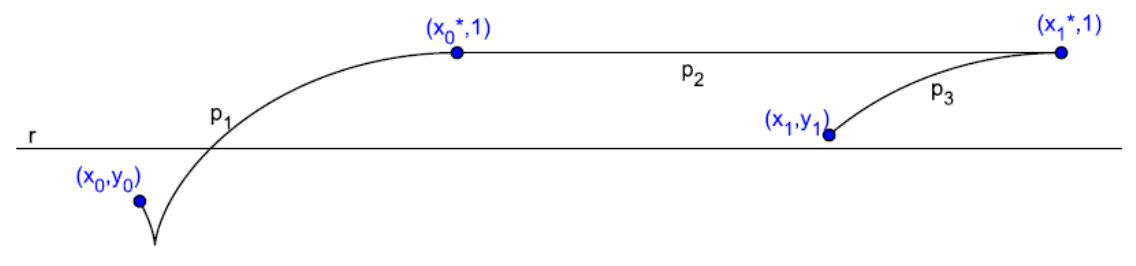}
	\caption[Path connecting two points with the simple TMM]{For any two points $(x_0,y_0)$ and $(x_1,y_1)$ in the strip $]-\infty,+\infty[\times[-1,1]$ there is a path (the combination of the paths $p_1, p_2, p_3$) satisfying the constraints of the machine seen in Fig. \ref{cycloid}.}\label{cycloid-path}
\end{figure}
It means that, given any initial position $(x_0,y_0)$ of $P_{[1]}$ in the strip $\left ]-\infty, +\infty\right [ \times \left [-1,1 \right ]$, any other value $(x_1,y_1)$ in the strip can be reached by $P_{[1]}$:
call $(x_0^*,1)$ and $(x_1^*,1)$ the first apex (going from left to right) of the cycloid starting respectively in $(x_0,y_0)$ and $(x_1,y_1)$. As visible in Fig. \ref{cycloid-path}, $P_{[1]}$ can reach $(x_1,y_1)$ from $(x_0,y_0)$ decomposing the motion in three parts: first, $P_{[1]}$ reaches $(x_0^*,1)$ (it is possible because they are on the same branch of cycloid); second, $P_{[1]}$ reaches $(x_1^*,1)$ (they are on the horizontal line $y=1$); third, $P_{[1]}$ reaches $(x_1,y_1)$ (because they are on the same branch of cycloid).

So, restricting the behavior to the coordinates $(x,y)$ of $P_{[1]}$, the space of the reachable configurations is exactly the strip $\left ]-\infty,+\infty\right [\times \left [-1,1 \right ]$.
This strip is a real semi-algebraic set, so it can be considered as the behavior of an algebraic machine (e.g. our TMM without the last wheel instruction). For the behavioral approach, two systems/machines are equivalent if they have the same behavior. If we consider the set of the reachable configurations as behavior of TMMs, the proposed machine is equivalent to an algebraic one.
However, for any algebraic machine, any path internal to the space of the reachable configuration is a path that can be walked by the machine. On the contrary, for this TMM, $P_{[1]}$ can walk only certain trajectories. Thus, we cannot consider a subset of $\mathbb R^n$ as universum for TMMs; we need something else. 
In particular, considering this example, the universum of a TMM can be made up by a (generally infinite) set of curves satisfying both the configuration conditions of the holonomic constraints and the path conditions imposed by non-holonomic ones. Let's define it more precisely.
%\vspace{\vDist}
\\

Differently from the synthetic approach, differential geometry introduces calculus for the investigation of curves. In particular, 
% Starting in antiquity, many concrete curves have been investigated using the synthetic approach. Differential geometry takes another direction: Curves 
curves are represented in a parametrized form as a class of equivalence on vector-valued functions\footnote
{A \textit{parametric curve} $\gamma$ is a vector-valued function $I \to {\mathbb R}^n$ (where $I$ is a non-empty interval of real numbers) of class $C^r$ (i.e. $\gamma$ is $r$ times continuously differentiable, eventually also $\infty$ times differentiable).
	If we consider $t\in I$, $t$ is the parameter of $\gamma$, and $\gamma(I)$ is the image of the curve (considering $t$ as time, $\gamma(t)$ represents the trajectory of a moving particle).
	
	The same image $\gamma(I)$ can be described by several different $C^r$ parametric curves: the aim of differential geometry is to study the curve independently from \textit{reparametrizations}. In doing that, we can consider curves as an equivalence class on the set of parametric curves.
	The equivalence class is called a $C^r$ curve (equivalent $C^r$ curves have the same image). For a detailed discussion, see, for example, \cite{DoC1976}.
}.
Coming back to TMMs, we can continue the interpretation of variables as coordinates of specific points of machines as done in algebraic ones. But, unlike before, it is no longer enough to consider variable as real numbers, but, to introduce path constraints, we can consider these variables as real functions ($\mathbb R\to\mathbb R$), where the parameter can be considered as the time.  %However, curves are classes of equivalence over the parametrization, thus they are not depending on the parameter: Similarly also for TMMs, the relation with the parameter (the time) is not important, but it is comfortable to use it to analytically characterize the universum.
Being an idealization of physical machines, we consider these functions to be $C^\infty$, i.e. \textit{smooth} functions.
% (sometimes I will refer to functions of the class $C^\infty$ as ``{smooth}'' functions).
%\vspace{\vDist}

With reference to the example of the machine in Fig. \ref{cycloid}, we need to consider as universum something like manifolds of curves. However, curves can be defined as classes of equivalence over vector-valued functions. So, to mathematically simplify the definition, we suggest to consider a ``{manifold of $C^\infty$ functions}'' as universum for TMMs. In particular, considering $n$ variables, these functions have to be $\mathbb R\to\mathbb R^n$.

Algebraic machines are a restriction of TMMs, so we can observe how the interpretation of the universum/behavior as real semi-algebraic set is reformulated as manifold of functions. From the point of view of paths, algebraic machines allow any path moving inside the defined semi-algebraic set $S\subset\mathbb R^n$, so the manifold of functions has to be made up by all the functions of class $C^\infty$ having their image inside $S$.

\subsection{Total behavior as solution of differential polynomial systems}\label{full_behavior_diff_machines}
We have just defined a manifold of smooth functions as universum of a TMM.
Variables are coordinates of specific points, and are considered as functions. %Thus, the total behavior of a TMM has to be the manifold of all the smooth functions $\mathbb R\to\mathbb R^{n}$, satisfying the constraints given by the specific machine. 
        %    In particular, we are going to spend few words to justify the equation for the wheel introduced in section \ref{language}.%the used constraints will be the ones of algebraic machines (analytically convertible in polynomials) and wheels, so the first thing to do is to figure out how to analytically translate wheel conditions.%\vspace{\vDist}
%
        %    Given the rod $r(i,j)$ (and therefore the points $P_i, Q_{i,j}$), consider \texttt{wheel(i,j)}. As typical in physics, consider the Cartesian coordinates of the points in function of the time: $P_i=(x(t),y(t))$, $Q_{i,j}=(x^*(t),y^*(t))$ (from now on we omit the dependence on $t$ in the notation). The wheel poses the condition that $P_i$ can move only with direction $Q_{i,j}-P_i$: introducing $P_i'=\left( \frac{dx}{dt},\frac{dy}{dt}\right)$, $P_i'$ has to be parallel to $Q_{i,j}-P_i$. Thus, introducing $Q_{i,j}=(x^*,y^*)$ (also these coordinates are in function of the time $t$, but we omit the dependence on $t$ in the notation) and the notation $\Delta x=x^*-x, \Delta y=y^*-y$, if we consider $P_i'=(x',y')$ and $Q_{i,j}-P_i=(\Delta x,\Delta y)$ as vectors, by proportions their parallelism will be given by the condition
        %    \begin{equation}\label{diff_condition}
        %    x'\Delta y=y'\Delta x.
        %    \end{equation}
As introduced in section \ref{language}, both wheel constraints and algebraic conditions are translatable in polynomials in the variables and their derivatives: as seen in the algebraic part (section \ref{differential_algebra}), such polynomials are named \textit{differential polynomials} and constitute the basis for differential algebra. More formally:
\begin{prop}\label{TMM-total_behavior}
    The total behavior of a TMM with $n$ points and $m$ rods is the manifold of all the smooth real functions $\mathbb R\to\mathbb R^{2(n+m)}$ satisfying a system $\Sigma$ of differential polynomial equations with integer coefficients.
\begin{proof}
    We just have to translate all the instructions defining the machine in differential equations (purely algebraic equations if the instruction is not \texttt{wheel}), taking care of introducing the auxiliary points $Q_{i,j}$ when introducing a rod. By construction, all the coefficients has to be integers.
    Renaming $x_1,\ldots,x_{2(n+m)}$ all the coordinates in function of the time $t$, and considering $p_1, \ldots, p_l$ the obtained differential polynomials on $x_1,\ldots,x_{2(n+m)}$, the total behavior is $\{(x_1,\ldots,x_{2(n+m)}) | x_i:\mathbb R \rightarrow \mathbb R, x_i\in C^\infty, p_1=\ldots =p_l=0\}$.
\end{proof}
\end{prop}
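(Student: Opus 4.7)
The plan is to proceed by direct translation of each primitive in the construction language into a differential polynomial relation, then collect them into the system $\Sigma$. First, I would fix the universum: for each of the $n$ points $P_i$ introduce two smooth dependent variables $x_i, y_i : \mathbb{R} \to \mathbb{R}$, and for each of the $m$ rods $r(k,j)$ introduce the auxiliary point $Q_{k,j}$ with coordinates $\xi_{k,j}, \eta_{k,j}$. This yields exactly the $2(n+m)$ dependent variables required by the statement.

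The second step is to enumerate the constraints, using precisely the translations already recorded in section \ref{language}, and to check that all coefficients are integers. For each rod, the unit-length relation $\|Q_{k,j} - P_k\|^2 = 1$ contributes a degree-two polynomial with coefficients in $\{1,-1\}$; each \texttt{onRod(k,i,j)} instruction contributes the three-point collinearity polynomial, again with coefficients in $\{1,-1\}$; each \texttt{dist(k,i,n,j)} instruction contributes two linear relations whose only non-unit coefficient is the integer $n$ (this is where integrality would fail if arbitrary real distances were admitted); and each \texttt{wheel(k,i)} instruction contributes the sole genuinely differential equation, a degree-two polynomial in the coordinates and their first derivatives, again with integer coefficients. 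These are exactly the polynomials that force $\Sigma$ to live in a differential polynomial ring rather than an ordinary one.

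The third step is to check that the manifold of smooth $\mathbb{R} \to \mathbb{R}^{2(n+m)}$ solutions of $\Sigma$ coincides with the total behavior. The inclusion of the behavior in the solution set is immediate from the translations in section \ref{language}, since each mechanical component is defined precisely to enforce its corresponding equation. Conversely, every smooth tuple annihilating $\Sigma$ satisfies exactly the defining conditions of the components; the only subtle point to verify, which I expect to be the main (though still easy) obstacle, is that the $2 \times 2$ determinant $y_k'(\xi_{k,i} - x_k) - x_k'(\eta_{k,i} - y_k)$ vanishes precisely when the velocity of $P_k$ is parallel to $Q_{k,i} - P_k$, including the admissible degenerate case $P_k' = 0$ in which the wheel is instantaneously at rest. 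The purely algebraic specialization (no \texttt{wheel} instructions) reduces to Proposition \ref{prop:total_algebraic}, which serves as a consistency check.
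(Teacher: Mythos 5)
Your proposal is correct and follows essentially the same route as the paper: translate each instruction (including the auxiliary rod points $Q_{k,j}$, which account for the $2m$ extra coordinates) into a differential polynomial with integer coefficients and take the smooth solution set as the total behavior. Your extra care about both inclusions and the degenerate case $P_k'=0$ for the wheel constraint is a welcome refinement of detail, not a different argument.
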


%Considering the system $\Sigma$ of all the real differential polynomial equations obtained as counterpart of the constraints of the machine, the full behavior will be the manifold of all the smooth functions $\mathbb R\to\mathbb R^{2k}$ satisfying the system $\Sigma$.
Note that we found an analytical form only to the total behavior: for the restricted behavior, in general we have to \textit{eliminate} the unwanted variables. 

\begin{prop}\label{solving_diff_systems}
%\subsection{Solving differential systems with TMMs}\label{solving_diff_systems}
    Given a system $\Sigma$ of differential polynomial equations with integer coefficients, we can construct a machine having as restricted behavior the manifold of the solutions of $\Sigma$. 
\begin{proof}
    First of all, we can convert $\Sigma$ in an equivalent system involving more variables but with only first derivatives. Let $y_1, \ldots, y_m$ be the variables of $\Sigma$, and let $k_i$ be the maximum derivative of $y_i$ present in $\Sigma$ (i.e. $y_i^{(k_i)}$ appears, but $y_i^{(k_i+1)}$ doesn't): for every variable we introduce new auxiliary variables $y_{i,j}$ (for $j=1,\ldots,k_i$) and the differential polynomials 
    \begin{equation}\label{eq:first_order}
        y_{i,0}-y_i\:,\: y_{i,1}-y_{i,0}'\:,\; \ldots\;,\: y_{i,k_i}-y_{i,k_i-1}'.
    \end{equation}
    Adding such new polynomials and modifying the system by substituting $y_i^k$ with $y_{i,k}$, we get an equivalent system involving only purely algebraic polynomials and the first order derivatives of (\ref{eq:first_order}). To simplify the notation and use a single index (still involving only first order derivatives), denote the various variables $y_{i,k}$ (for all $i$ and $k$) by $x_1,\ldots,x_n$.
    
    Working on real values, we can convert the system $\Sigma$ given by  $p_1=\ldots=p_l=0$ in a single differential polynomial  $p=(p_1)^2+\ldots+(p_l)^2=0$. 
    This differential polynomial is a polynomial on $x_1,\ldots,x_n$ and their first derivatives. As seen in Prop. \ref{prop:algebraic}, we can solve by algebraic machines any polynomial, hence we just have to show how to construct the derivatives of the variables $x_1,\ldots,x_n$ (this construction was firstly expressed in \cite{Mil2012} to solve polynomial Cauchy problems). For this purpose we introduce the following code.
%\vspace{\vDist}

\begin{figure}
	\center \includegraphics[scale=.3] {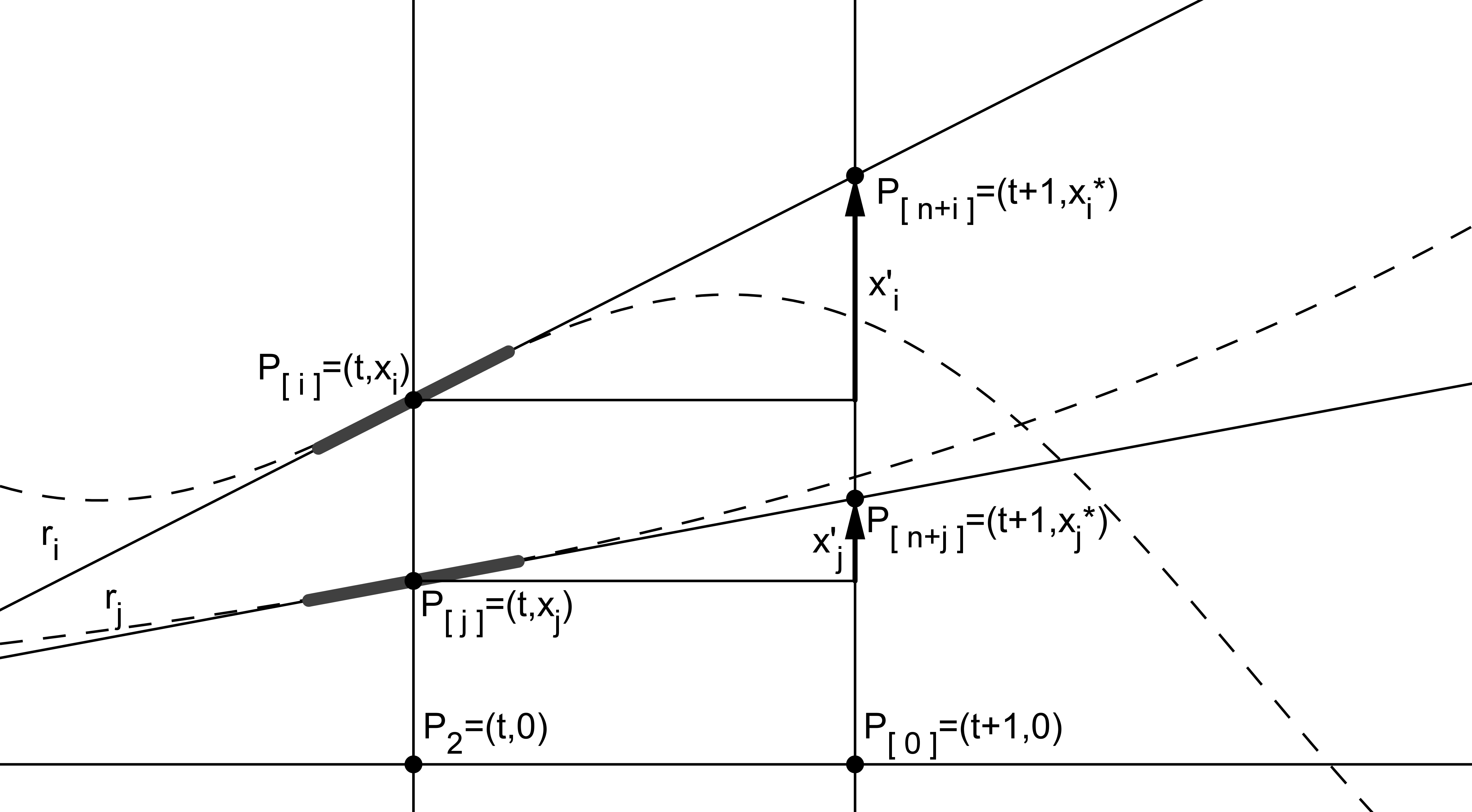}
	\caption[Construction of the derivative of the variables $x_i,x_j$]{Construction of the derivative of the variables $x_i,x_j$.}\label{diff_poly}
\end{figure}
\begin{code}
    Given the fixed points $P_0, P_1$ and $r(0,0)$ as abscissa, in this code we introduce the following points: $P_2=(t,0), P_{[0]}=(t+1,0), P_{[i]}=(t,x_i), P_{[i+n]}=(t+1,x_{i}+x'_{i})$ (for $i=1,\ldots,n$).
     
    	\begin{tabular} {p{0.1cm} p{7.7cm} p{6.5cm}} %{ll} 
%		& \texttt{onRod(0,0,1)} & consider the rod $r(0,0)$ between $P_0$ and $P_1$\\
%		\noalign{\vskip 1mm}
		& \texttt{onRod(0,0,2)} & consider $P_2$ on $r(0,0)$\\
		\noalign{\vskip 1mm}
%		& \texttt{onRod(0,0,3)} & consider $P_3$ on $r(0,0)$\\
%		\noalign{\vskip 1mm}
		& \texttt{dist([0],0,0,sum(1,2))} & denoting $P_2=(t,0)$, $P_{[0]}$ is constrained to stay in $(t+1,0)$ \\
		%(by \texttt{dist(2,0,-1,[0])} $P_{[0]}$ could have been one unit at the left of $P_2$)\\
		\noalign{\vskip 1mm}
		&
		\begin{tabular}{l}
		     \texttt{onRod(perp(0,0,2),0,[1])} \\
		     \ldots\\
		     \texttt{onRod(perp(0,0,2),0,[n])}
		\end{tabular}
		 & each $P_{[i]}$ is constrained to lie on the rod perpendicular to $r(0,0)$ passing through $P_2$\\
		\noalign{\vskip 1mm}
%		& \ldots\\
%		\noalign{\vskip 1mm}
%		& \texttt{onRod(perp(0,0,2),0,newPoint(n-1))} \\
%		\noalign{\vskip 1mm}
		& 
		\begin{tabular}{l}
		     \texttt{onRod(perp(0,0,[0]),0,[n+1])} \\
		     \ldots\\
		     \texttt{onRod(perp(0,0,[0]),0,[2n])}\\
		\end{tabular}
%		\texttt{onRod(perp(0,0,3),0,newPoint(n))} 
		& each $P_{[n+i]}$ has to lie on the rod perpendicular to $r(0,0)$ passing through $P_{[0]}$\\
		\noalign{\vskip 1mm}
%		& \ldots\\
%		\noalign{\vskip 1mm}
%		& \texttt{onRod(perp(0,0,3),0,newPoint(2n-1))} \\
%		\noalign{\vskip 1mm}
        & 
		\begin{tabular}{l}
		     \texttt{onRod([1],0,[n+1])} \\
		     \ldots\\
		     \texttt{onRod([n],0,[2n])}
		\end{tabular}
%        \texttt{onRod([0],0,newPoint(n)}  
        & each rod $r([i],0)$ has to pass through $P_{[n+i]}$\\
		\noalign{\vskip 1mm}
%		& \ldots\\
%		\noalign{\vskip 1mm}
%        & \texttt{onRod(newPoint(n-1),0,newPoint(2n-1)} \\
%		\noalign{\vskip 1mm}
        & 
		\begin{tabular}{l}
		     \texttt{wheel([1],0)} \\
		     \ldots\\
		     \texttt{wheel([n],0)}
		\end{tabular}
%        \texttt{wheel([0],0}  
        & in these lines we introduce wheels in $r([i],0)$\\
		\noalign{\vskip 1mm}
%		& \ldots\\
%		\noalign{\vskip 1mm}
%        & \texttt{wheel(newPoint(n-1),0} \\
%		\noalign{\vskip 1mm}
	\end{tabular}
\end{code}

As Fig. \ref{diff_poly} illustrates, consider the point $P_2=(t,0)$ by a cart on the abscissa ($t$ can assume any real value). Note that $t$ is arbitrary, the important thing is that all the various $x_i$ are considered in correspondence of the same $t$: $t$ can be viewed as the \textit{independent variable} in function of which the various functions (dependent variables) are computed.
Then, consider the points $P_{[1]}=(t,x_1), \ldots , P_{[n]}=(t,x_n)$. On these points, we can put $n$ rods: call $r_i=r([i],0)$ the rod joined in $P_{[i]}$. Put also a wheel on every $r_i$ in correspondence of $P_{[i]}$. 
We can construct the rod of equation $x=t+1$: call $x_i^*$ the ordinate of the point $P_{[n+i]}$ in the intersection of $x=t+1$ and $r_i$.

For what has been observed about the role of the wheel, $r_i$ has to be tangent to the graph of $(t,x_i)$, hence $x_i^*$ will be $x_i+x'_i$. Obviously, it was not strictly necessary to construct the rod of equation $x=t+1$: in the case of a rod of equation $x=t+a$ (for any constant $a\neq 0$), the intersection of $r_i$ with the new rod is $(t+a,x_i+a x'_i)$ (in other words, $x_i^*=x_i + a x'_i$). 
	%However, having assumed the introduction of the unitary length, generally it is analytically simpler to consider $a=1$.
	%\label{note_step}
It means that we can construct the point $(x'_i,0)$ that can be used as a new variable, and so the differential polynomial can be considered as a purely algebraic polynomial on $x_1,\ldots,x_n, x_1',\ldots,x_n'$. % (their derivatives, not only first ones, but iteratively of any finite order). 
Specifically, the points $(x'_{i},0)$ are obtainable by \texttt{diff(inv([n+i]),inv([i]))} (with $i=1,\ldots,n$).

So, the possibility of solving polynomials with algebraic machines assures that, for every system of differential polynomial equations $\Sigma$, we can consider a TMM having as restricted behavior (restricted to the original $y_1,\ldots,y_m$) the solution of $\Sigma$.
\end{proof}
\end{prop}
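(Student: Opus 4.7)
The plan is to reduce the target manifold, step by step, to an object that the algebraic tools from Section~\ref{Algebraic_machines} already handle. First I would reduce $\Sigma$ to first order: for each dependent variable $y_i$, introduce auxiliary dependent variables $y_{i,0},\ldots,y_{i,k_i}$ together with the defining polynomials $y_{i,0}-y_i$ and $y_{i,j}-y_{i,j-1}'$ for $1\leq j\leq k_i$. Replacing $y_i^{(k)}$ by $y_{i,k}$ throughout $\Sigma$ yields an equivalent system whose only derivatives are first order, and the projection that forgets all $y_{i,k}$ with $k\geq 1$ recovers the original solution manifold.

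Next I would collapse the enlarged system into a single equation. Since we work over $\mathbb{R}$, the system $p_1=\ldots=p_l=0$ is equivalent to $p_1^2+\ldots+p_l^2=0$, so it suffices to realise a TMM enforcing one first-order differential polynomial $p(x_1,\ldots,x_n,x_1',\ldots,x_n')=0$ with integer coefficients, in the unknowns $x_1,\ldots,x_n$ which represent the $y_{i,j}$.

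The crucial new ingredient, where the wheel becomes indispensable, is a subroutine producing $x_i'$ from $x_i$. Introduce an independent parameter by a cart $P_2=(t,0)$ on the abscissa and realise each dependent variable as the ordinate of a point $P_{[i]}=(t,x_i)$ lying on the vertical rod through $P_2$ (using Problem~\ref{perp}). Attach a rod $r_i$ at $P_{[i]}$ and place a wheel at $P_{[i]}$ on $r_i$: by the tangency property illustrated in Fig.~\ref{rolling_wheel}, the velocity of $P_{[i]}$ is parallel to $r_i$, and since the horizontal component of that velocity is $dt/dt=1$, the slope of $r_i$ equals $x_i'$ exactly. Intersecting $r_i$ with the auxiliary vertical rod $x=t+1$, built via Problems~\ref{sum} and \ref{perp}, gives a point with ordinate $x_i+x_i'$; applying Problems~\ref{inverse} and \ref{diff} then isolates a point $(x_i',0)$ on the abscissa.

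Once the points $(x_1,0),\ldots,(x_n,0),(x_1',0),\ldots,(x_n',0)$ are all constructed, $p$ is a purely algebraic polynomial on $2n$ real variables with integer coefficients, and Proposition~\ref{prop:algebraic} provides an algebraic sub-machine whose restricted behavior is its zero set. Attaching this sub-machine to the wheel construction and restricting the total behavior to the original $y_i$ gives a TMM whose restricted behavior is the manifold of smooth solutions of $\Sigma$. The main obstacle I would expect is justifying rigorously that the non-holonomic wheel condition really encodes $d/dt$ in the intended sense---namely, that because $P_2$ has unit horizontal speed, the constraint translates analytically into the algebraic relation $y'(P_{[i]})=x_i'\cdot x'(P_{[i]})$ with $x'(P_{[i]})=1$, and that this correspondence is stable under smooth reparametrisation, so that the restricted behavior really is the full solution manifold of $\Sigma$ rather than a parametrisation-dependent subset of it.
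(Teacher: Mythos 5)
Your proposal is correct and follows essentially the same route as the paper's proof: reduction to first order via auxiliary variables, collapsing to a single polynomial by sum of squares over $\mathbb{R}$, the wheel-and-cart construction intersecting the tangent rod with $x=t+1$ to obtain $x_i+x_i'$, and finally Proposition~\ref{prop:algebraic} for the purely algebraic part. The reparametrisation subtlety you flag at the end (that the wheel really gives $t'\,x_i^* = t'(x_i+x_i')$, so one must normalise $t'=1$) is exactly what the paper addresses separately in its note on ``independentization'' (section~\ref{independentization}).
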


\subsection{Note on ``{independentization}''}\label{independentization}
As a first example of passage from differential equation to TMM, we can consider the problem $y'=y$. To construct a machine solving it we can start considering a cart $(t,0)$ on a fixed rod (that we consider as abscissa), a rod perpendicular to the abscissa and translating according to the value of $t$, and on this rod the point $(t,y)$.
As already observed, instead of the rod of equation $x=t+1$, we can consider any other form $x=t+a$. In particular, it is simpler if we adopt $a=-1$. Thus, $y^*=y+ay'=0$ (for the problem is $y'=y$).
Therefore, we have to introduce the rod $r$ passing through $(t,y)$ and $(t-1,0)$, and to put a wheel on it in correspondence of $(t,y)$, obtaining the machine of Fig. \ref{exp_machine}.
%figure no longer available
%(it is the symmetric of the machine seen in the right of Fig. \ref{Perks}, page \pageref{Perks}). 
Conceptually, this machine is constructively using the property of the exponential curve of having a fixed-length subtangent (i.e. the segment connecting $(t-1,0)$ and $(t,0)$).
\begin{figure}
	\center
	\includegraphics[scale=.5] {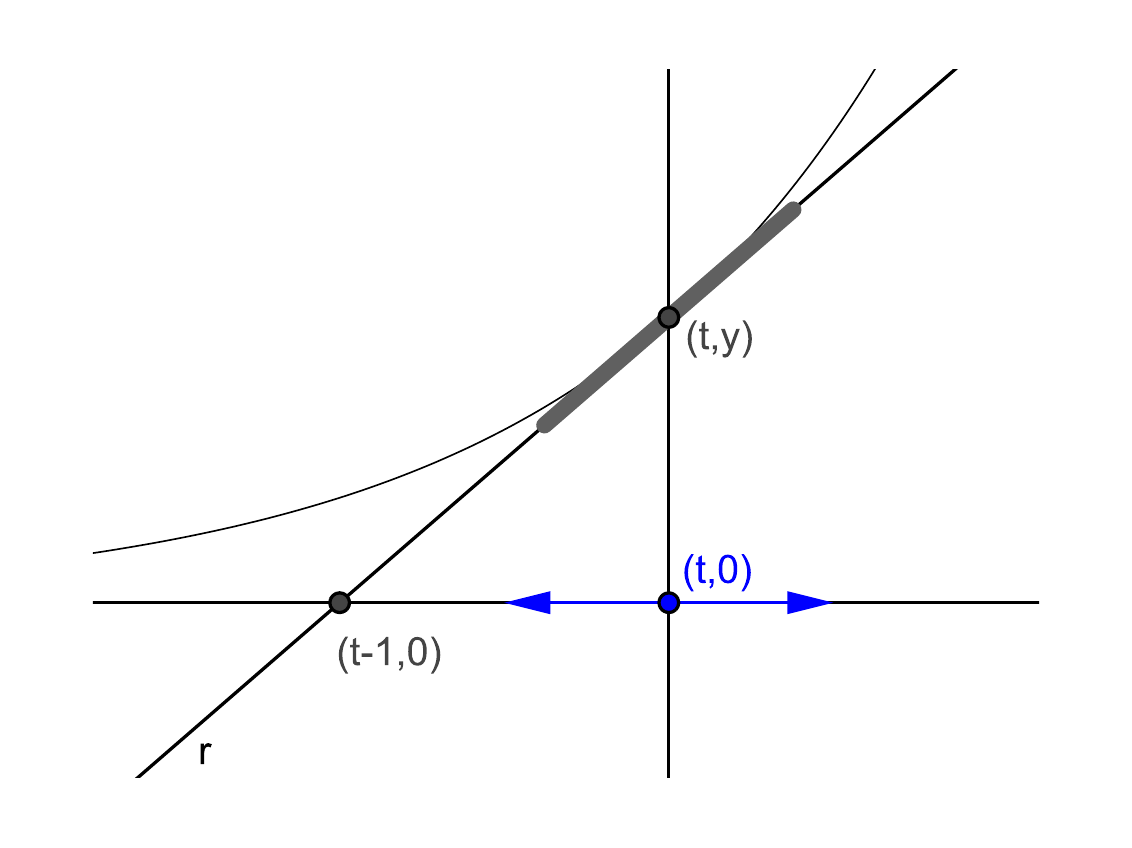}
	\includegraphics[scale=.3] {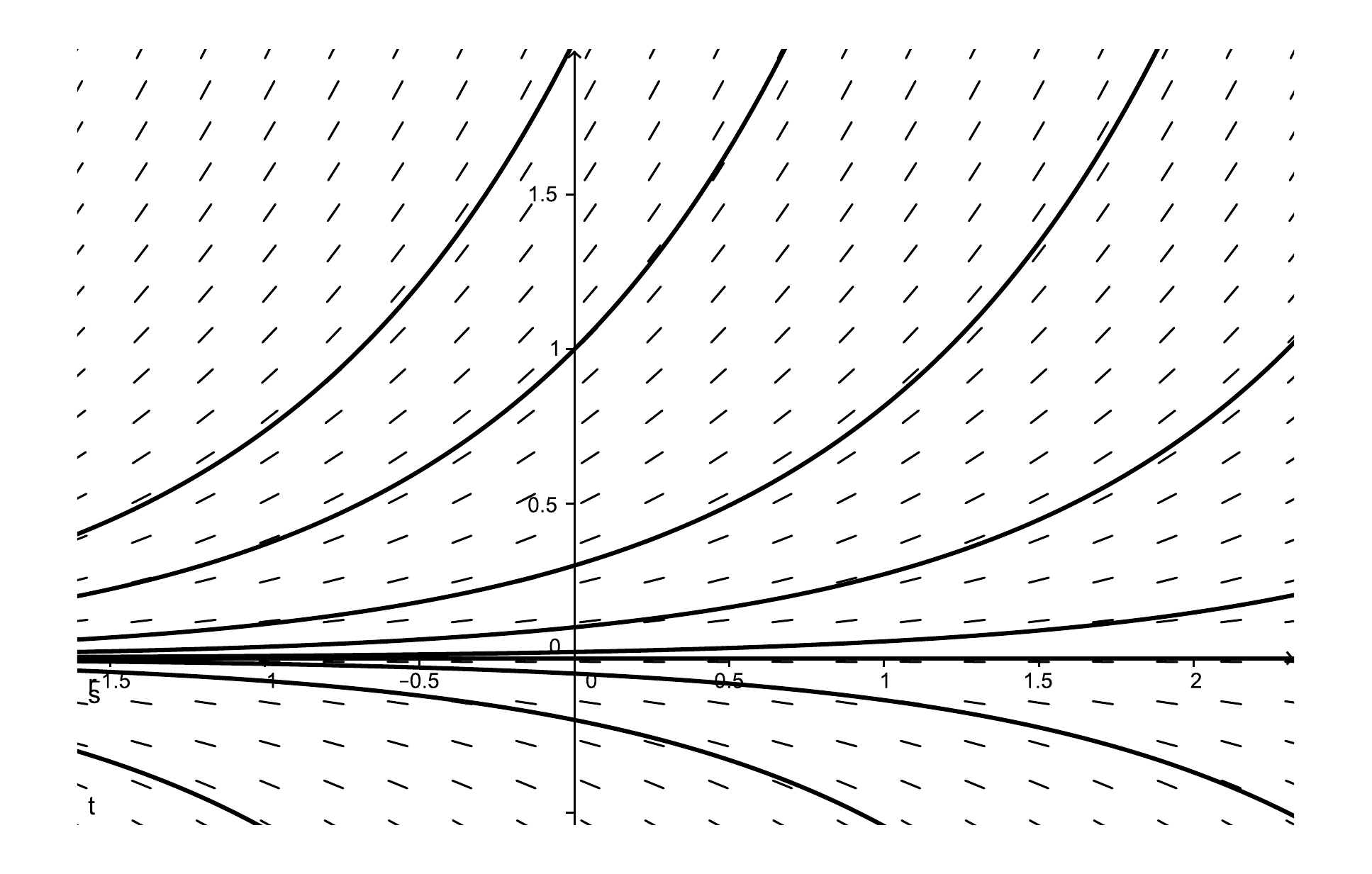}		
	\caption[A machine solving the differential equation $y'=y$]{A machine solving the differential equation $y'=y$ (left) and the relative slope field (right). %Note that, according to different initial conditions, the traced curve will be an exponential one or a line in the case of an initial condition $y(t_0)=0$. 
	%figure no longer available
	%This machine is the symmetric of the one in the right of Fig. \ref{Perks}, page \pageref{Perks}.
	}\label{exp_machine}
\end{figure}
%\vspace{\vDist}

Until now, we passed from differential equation to TMM:
conversely now we convert the machine in differential polynomials. Given the machine of Fig. \ref{exp_machine}, what is the differential polynomial system defining its behavior according to the imposed constraints? %(according to the method of subsection \ref{full_behavior_diff_machines})? 
First of all, let's define the machine more precisely.
\begin{ex}\label{example:exp}
Consider the machine defined by $P_2=(t,y)$ and the constraint that the direction of the point $(t,y)$ has to be the line passing through $(t-1,0)$. 
%Consider the machine defined by $P_2=(t,0)$, $P_3=(t-1,0)$, and the constraint that the direction of the point $(t,y)$ has to be the line passing through $P_3$. 
\begin{code}
    Consider the fixed points $P_0,P_1$.
     
    	\begin{tabular} {p{0.1cm} p{5.5cm} p{7.5cm}} %{ll} 
		& \texttt{onRod(2,0,diff(2,1))} & consider $P_2=(t,y)$, thus $(t-1,0)$ is constrained to lie on the rod $r(2,0)$\\
		\noalign{\vskip 1mm}
		& \texttt{wheel(2,0)} & pose a wheel in $r(2,0)$ at $P_2$\\
		\noalign{\vskip 1mm}
%OLD version
%		& \texttt{onRod(0,0,2)} & consider $P_2=(t,0)$ on $r(0,0)$\\
%		\noalign{\vskip 1mm}
%		& \texttt{onRod(perp(0,0,2),0,3)} & $P_3$ is constrained stay on the perpendicular to $r(0,0)$ passing through $P_2$\\
%		\noalign{\vskip 1mm}
%		& \texttt{onRod(3,0,diff(2,1))} & $(t-1,0)$ is constrained to lie on the rod $r(3,0)$\\
%		\noalign{\vskip 1mm}
%		& \texttt{wheel(3,0)} & pose a wheel in $r(3,0)$ at $P_3$\\
%		\noalign{\vskip 1mm}
	\end{tabular}
\end{code}
\end{ex}

Because of the wheel, $(t',y')$ has to be parallel to $r(2,0)$,
%. So, using the formula (\ref{diff_condition}), we get 
 therefore 
$t'y-y'=0$.
Note that the obtained differential equation is different from the original one ($y'-y=0$). The difference is given by the implicit assumption that $t'=1$. 
When we construct a machine solving a system of differential equations with the method seen in Prop. \ref{solving_diff_systems}, we implicitly assume that $t$ is the independent variable, so everything is obtained in function of its value. 
The introduction of a new variable (with constant derivative 1) for the independent one is a standard method to pass from a differential polynomial involving also the independent variable to an equivalent polynomial not depending directly on the independent variable. % The latter kind of polynomials is called ``{autonomous}.''
%\vspace{\vDist}

In summary, given a system of differential polynomials $\Sigma$, with the method of Prop. \ref{solving_diff_systems} we can construct a machine solving it, but the system $\Sigma^*$ obtained analyzing this machine is slightly different from the original $\Sigma$. If we want to obtain $\Sigma$ from $\Sigma^*$, we have to add the condition $x_i'=1$ for the variable $x_i$ that represents the abscissa of the independent point $(t,0)$. We can call such additional condition the ``{independentization of a variable}'' (because from $x_i'=1$ it follows $x_i=t+k$, i.e. $x_i$ is exactly the independent variable eventually translated of a constant $k$).

\subsection{Note on initial conditions}\label{initial_cond}
The total behavior of TMMs can be analytically defined by a system of differential polynomials. However, when a machine is considered to work on a plane, the initial position of its components can be considered implementing the initial conditions. 
In this section, we focus on how to apply these initial conditions.

Physical realizations of TMMs are devices that can be lifted and downed on the plane. While the device is not yet downed on the plane, there are fewer working constraints (because of the lack of wheel friction), so we can move some points that lose some degrees of freedom when wheels touch the plane.
Therefore, if we consider TMMs as physical devices, their assembly and use can be distinguished in two different steps:
\begin{enumerate}
	\item composition: the various parts are assembled in order to construct the machine;
	\item friction on the plane: the machine is ``put on the plane,'' so wheels avoid lateral motions.
\end{enumerate}
The difference between these two steps is the role of the wheel. In the first case the machine is constructed but, considering it lifted from the plane, the wheel constraints do not work, so on the machine only the holonomic constraints are active (the ones of algebraic machines). When we ideally put the constructed machine on the plane, wheels begin to have friction on the plane, and consequently the related nonholonomic constraints begin to work.

While the composed machine is already defining differential polynomial equations, the activation of the friction is related to the posing of initial conditions. 
In fact, in the instant when the constructed machine touches the plane (and the wheel friction begins), all the points have a certain position: the values of the variables relative to these positions can be viewed analytically as the initial conditions.
Therefore, to pose an initial condition to some variables, we have to suitably move the points (the position of which is related to the wanted variables) when the device is lifted. The downing of the device assures that the variables solve the Cauchy problem.
%\vspace{\vDist}

\begin{figure} %[!htb]
	\begin{center}
		\includegraphics[scale=.25] {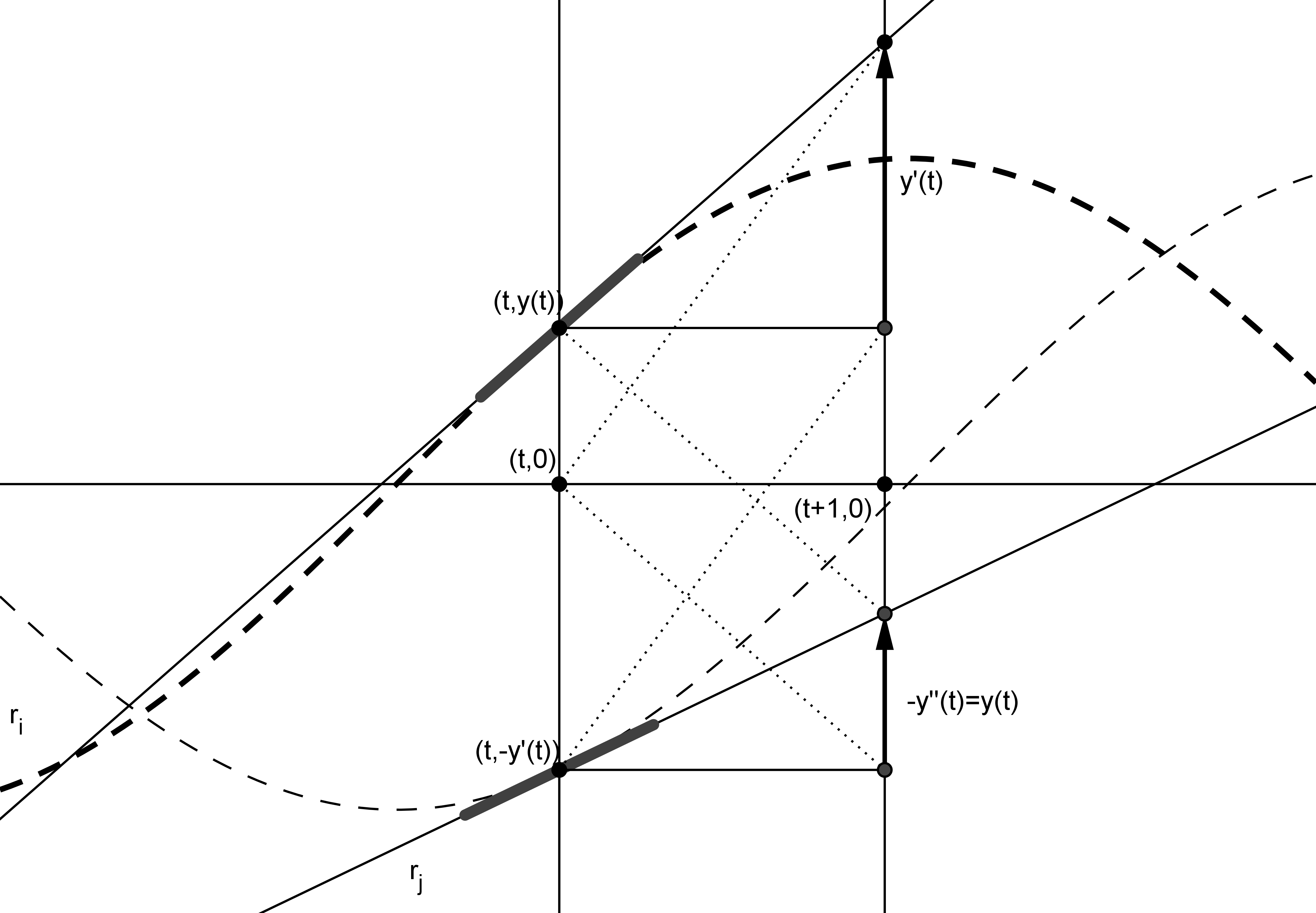}
	\end{center}
	\caption[A machine for $y=-y''$]{Sketch of a machine for $y=-y''$. Parallel dotted lines represent the translation of lengths.} \label{sin_machine}
\end{figure}

To clarify these ideas, as an example, we propose a machine solving the differential equation $-y''(t)=y(t)$.
According to different initial conditions, the same machine can generate the sine (posing $y(0)=0, y'(0)=1$) and the cosine function (with initial conditions $y(0)=1, y'(0)=0$).

As seen in Fig. \ref{diff_poly}, once introduced the point $(t,y(t))$, we can construct the point $(t+1,y(t)+y'(t))$. Reporting the length $-y'(t)$ as represented in the Fig. \ref{sin_machine} (parallel dotted lines represent the translation of lengths, without visualizing all the necessary components), we can construct $(t,-y'(t))$. Then, once constructed $(t+1,-y'(t)-y''(t))$, it is possible to impose $y(t)=-y''(t)$ by reporting the length $-y''(t)$.

Now it is time to impose initial conditions. For cosine requires totally similar steps, let's consider only the sine function, thus we have to impose $y(0)=0, y'(0)=1$. Physically, this condition has to be posed after the construction of the machine, and before the ``activation'' of the friction of the wheels. First, we move the cart in $(t,0)$ until it reaches the position $(0,0)$, then we move the carts $(t,y(t))$ and $(t,-y'(t))$ until they (respectively) reach the positions $(0,0)$ and $(0,-1)$. Once posed these conditions avoiding the nonholonomic constraints (ideally: when the machine is not yet put on the plane), the nonholonomic constraints of wheels can be activated (the machine can be finally put on the plane, allowing the friction of the wheels on the plane). In this way the machine generates exactly the sine function.
%\vspace{\vDist}

In contrast to the case of the exponential, the sine function is constructed using a second order differential equation, so it is not possible to consider the wheel solving a static graphical slope field. Indeed, the slope of the rod with a wheel is dynamically defined in function of the position of the other wheel. %However, as in general, also in this case the machine can be considered as a continuous mechanical reinterpretation of Euler's method.

\section{Problem solving}\label{diff_problems}
With the introduction of tractional motion machines, we can overcome Cartesian geometry still relying on the idealization of suitable machines, and, thanks to differential algebra, we can also provide a well-defined language and set of algorithms for the analytical counterpart without the need of the infinity or of approximations (as the mathematical concept of limits). In this section, we suggest some applications of differential algebra for such machines.

According to \cite[p. 287]{Bos2001}, Descartes' geometric problem solving method consisted of an analytic part (using algebra to reduce any problem to an appropriate equation) and a synthetic part (finding the appropriate geometric construction of the problem on the basis of the equation).
Considering analysis by differential algebra and synthesis by TMMs, the same Cartesian problem-solving method can be extended beyond algebraic boundaries by the following steps: 
\begin{enumerate}
	\item start from a problem about TMMs,
	\item convert it in differential polynomials,
	\item solve the problem with differential algebra algorithms,
	\item when requested, after the simplification, find the specific solution with diagrammatic construction of TMMs.
\end{enumerate}
Regarding the third step, we suggest to manipulate equations with the \emph{DifferentialAlgebra} package of the computer algebra system Maple, of which we include commands in footnotes.

\subsection{The example of the cycloid}\label{analysis_cycloid}
As a first example, by automated reasoning we prove what has been informally observed in the section \ref{almost_cycloid} about the behavior of the machine of Ex. \ref{example:cycloid} (page \pageref{example:cycloid}). 

Consider $P_2=(t,0)$ and $P_{[1]}=(x,y)$ moving around $P_2$ at unitary distance, so 
\begin{equation}\label{cycloid_cond1}
(x-t)^2+y^2=1.
\end{equation}
The wheel in $P_{[1]}$ has as direction $P_{[1]}-P_{[0]}$, i.e. $P'_{[1]}=(x',y')$ has to be parallel to $P_{[1]}-P_{[0]}=(x-t,y-1)$:
\begin{equation}\label{cycloid_cond2}
y'(x-t)=x'(y-1).
\end{equation}

Thus, we have two equations (the first purely algebraic and the second differential) in $t,x,y$. If we are interested in the curve traced by $P_{[1]}$, we can use differential elimination to eliminate $t$. We can proceed with the following steps:
\begin{enumerate}
	\item consider the differential ring $R$ with rational coefficients having as variables $t,x,y$, and adopt a ranking eliminating $t$;
	\item consider the ideal $I$ in $R$ generated by the two differential polynomials;
	\item consider in $I$ the \emph{differential regular chains} eliminating $t$.
\end{enumerate}
We can translate these steps in commands for computer algebra software\footnote
{In Maple we can perform these operations with the following code lines (commented on the right):\\
	
	\begin{tabular} {p{7cm} p{6.5cm}} %{ll} 
		\texttt{with(DifferentialAlgebra)}	& load the package\\
		\noalign{\vskip 1mm}
		\texttt{R := DifferentialRing(blocks=[t,x,y], derivations=[a])} & construct the differential ring with as independent variable $a$, and dependent ones $t,x,y$ with the ranking $t\gg x \gg y$\\
		\noalign{\vskip 1mm}
		\texttt{p := (x(a)-t(a))\^{}2+y(a)\^{}2 = 1} & $p$ is an algebraic equation\\
		\noalign{\vskip 1mm}
		\texttt{q := (diff(y(a), a))*(x(a)-t(a)) = (diff(x(a), a))*(y(a)-1)} & $q$ is a differential equation (\texttt{diff(f(a), a)} stands for the derivative $df/da$) \\
		\noalign{\vskip 1mm}
		\texttt{ideal := RosenfeldGroebner([p, q], R)} & \texttt{ideal} is the radical differential ideal generated by $p$, $q$ \\
		\noalign{\vskip 1mm}
		\texttt{Equations(ideal)} & returns the equations of \texttt{ideal}\\
		\noalign{\vskip 1mm}
		\texttt{Inequations(ideal)} & returns the inequations of \texttt{ideal}\\
		\noalign{\vskip 1mm}
	\end{tabular}
	
	Note that the commands \texttt{Equations(ideal)} and \texttt{Inequations(ideal)} 
	show the differential regular chains for the ideal in $t,x,y$.
	
	Once obtained the differential regular chains reduced with respect to a certain ranking, the elimination of the greater depending variable only consists in taking all and only the equations and inequalities of the differential regular chains where the variable and its derivatives do not occur. Using Maple, it can be achieved with the command: \texttt{Equations(ideal, leader < t(a))}.
}. In particular we obtain that the \emph{differential regular chains} (for the ideal generated by the two equations characterizing the TMMs) reduced with the ranking $t\gg x \gg y$ are:
$$ C_1 =\{  t y' +x' y -x'-x y'= 0,  x'^2 y - x'^2 + y'^2 y + y'^2 = 0,
y'\neq 0,     x' y - x' \neq 0,   y-1 \neq 0  \};$$
$$ C_2 = \{ t-x = 0, y-1 = 0 \};$$
$$ C_3 = \{ t^2 - 2x t + x^2 + y^2 -1 =0;   x'=0,   y'=0,   t-x\neq 0\};$$
$$C_4 = \{  t-x=0, x'=0, y^2-1=0,   y \neq 0    \}.$$
But, as said, we are not interested in the behavior of $t$, so, if we eliminate it (considering the given ranking of the variables, we can just skip all the equations with $t$ in $C_1, C_2, C_3, C_4$), we obtain
$$C_1^*=\{  x'^2 y - x'^2 + y'^2 y + y'^2 = 0,  y'\neq 0,   x' y - x' \neq 0,   y-1 \neq 0  \};$$
%$$=\{x'^2(y-1)+y'^2(y+1)=0, y'\neq 0, x' y - x' \neq 0, y-1 \neq 0  \};$$
$$C_2^*=\{  y-1 = 0 \};$$
$$C_3^*=\{x'=0, y'=0	\};$$
$$C_4^*=\{x'=0, y^2-1=0, y\neq 0
    \}.$$
Even though it is possible to do some simplifications, we adopted the given form (that is exactly the one given by the Maple code) to evince the fact that any reasoning can be conducted in a purely formal way without considering the semantic meaning.
We can observe that $C_3^*$ and $C_4^*$ does not provide us anything interesting but single points.

On the contrary, we can observe that $C_1^*$ contains as equation the general solution which, rewritten as an ODE, becomes the differential equation of the cycloid: 
$$\left( \frac{dy}{dx} \right)^2=\frac{1-y}{1+y}.$$
Another solution out of arcs of cycloids is made up by the line $y= 1$, that was excluded in $C_1^*$ because of its inequalities.
%\vspace{\vDist}

We can also ask ourself which constraints can be added to construct exactly a cycloid. 
According to the property seen in Fig. \ref{cycloid-dim} (page \pageref{cycloid-dim}), we can impose a new tangent condition in another point of the circumference of the rolling disk. As such a point, we can consider the point symmetric to $P_{[1]}$ with respect to the center $P_2$, as visible in Fig. \ref{cycloid-pure}. % (this machine is a modification of a machine introduced in \cite{Mil2012}).
\begin{figure}
	\center \includegraphics[scale=.2] {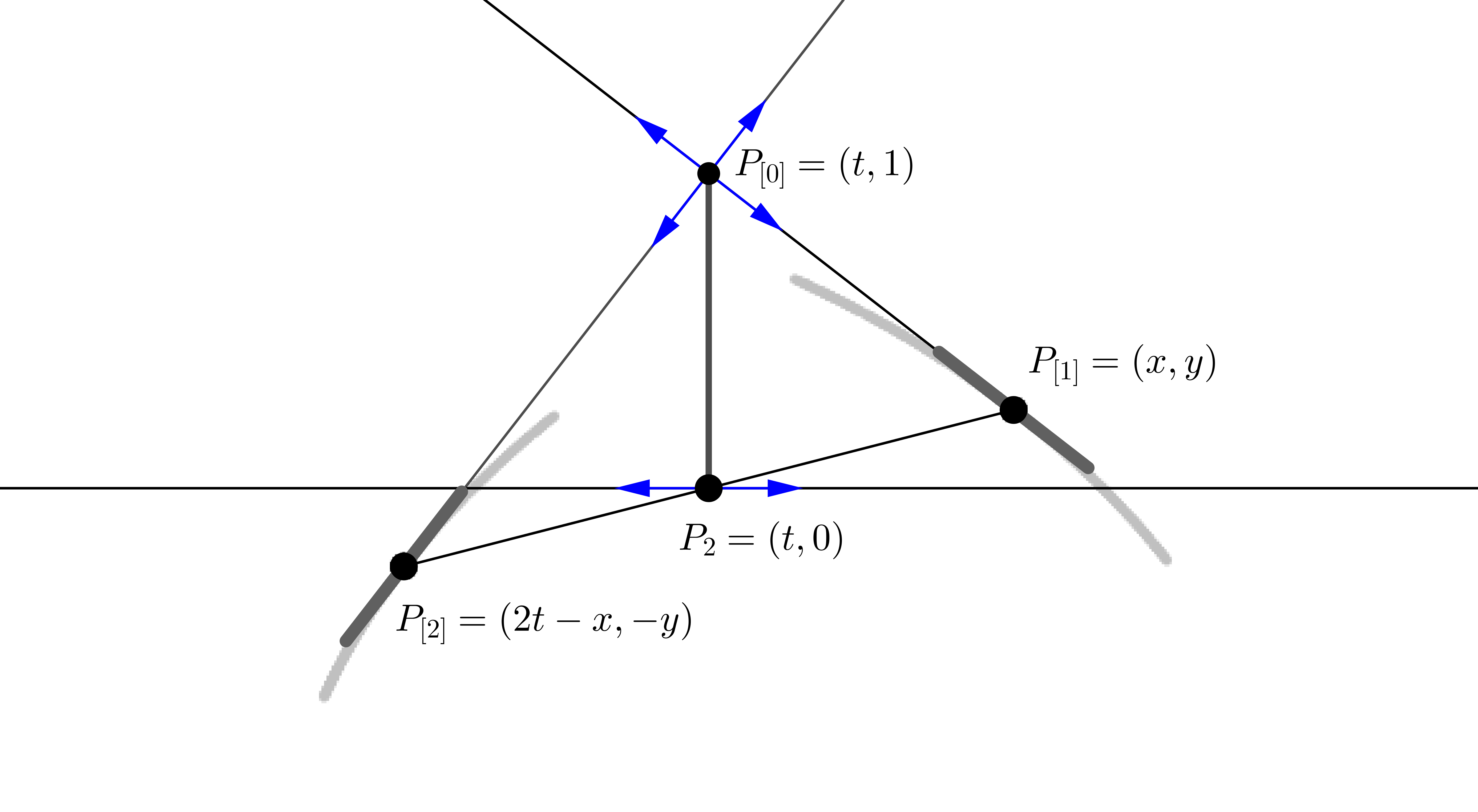}
	\caption[A machine for the cycloid]{A machine for the cycloid. We introduce a new point $P_{[2]}$ symmetric of $P_{[1]}$ with respect to $P_2$. Also in $P_{[2]}$ we pose a wheel on the rod passing through $P_{[0]}$.}\label{cycloid-pure}
\end{figure}
Such machine can be defined by appending few instructions to the code of the example \ref{example:cycloid}: 
\begin{code}
    We add a new point $P_{[2]}$ and pose a wheel on it. % (see Fig. \ref{cycloid-pure}).

    	\begin{tabular} {p{0.1cm} p{7.5cm} p{6.5cm}} %{ll} 
		& \texttt{dist(2,0,-1,[2])} & the point $P_{[2]}$ is introduced as the symmetric of $P_{[1]}$ with respect to $P_2$\\
		\noalign{\vskip 1mm}
		& \texttt{onRod([2],0,[0])} & the point $P_{[0]}$ is constrained to lie on the rod $r([2],0)$\\
		\noalign{\vskip 1mm}
		& \texttt{wheel([2],0)} & pose a wheel in $r([2],0)$ at $P_{[2]}$\\
		\noalign{\vskip 1mm}
	\end{tabular}
\end{code}
%Even in this case, to simplify the notation, we call $P_5$ the point $P_{[2]}$. 
The wheel in $P_{[2]}=(2t-x,-y)$ has as direction $P_{[2]}-P_{[0]}$. That means that $P'_{[2]}=(2t'-x',-y')$ has to be parallel to $P_{[2]}-P_{[0]}=(t-x,-y-1)$:
\begin{equation}\label{cycloid_cond3}
(2t'-x')(-y-1)=-y'(t-x).
\end{equation}
If we consider the ideal generated by the three equations (\ref{cycloid_cond1}), (\ref{cycloid_cond2}) and (\ref{cycloid_cond3}), we can compute the relative differential regular chains eliminating $t$. We obtain that $x$ and $y$ have to satisfy the differential systems $C_1^{**}, C_2^{**}, C_3^{**}$ where $C_1^{**}=C_1^*, C_2^{**}=C_3^*, C_3^{**}=C_4^*$, i.e. there is no longer the solution $y=1$ given by $C_2^*$. Thus, with the new conditions, the point $(x,y)$ is always constrained to walk along a cycloid.

\subsection{Differential universality theorem and constructible functions} \label{constructible_functions}
It is time to give a characterization theorem for the behavior of TMMs, somehow the extension of Kempe's (algebraic) universality theorem to the differential case.

\begin{thm}[Differential universality]\label{diff_universality}
    Considering $n$ variables, the behavior of a TMM coincides with 
    the union of the solutions of a finite number of systems of differential polynomial equations and inequations with integer coefficients and with real functions as independent variables.
\end{thm}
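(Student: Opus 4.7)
The plan is to prove the two inclusions separately, mirroring the structure of the algebraic universality theorem (Theorem \ref{alg_universality}) but substituting differential polynomials for algebraic ones and using the Rosenfeld-Gröbner algorithm in place of ordinary projection.

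For the ``behavior $\subseteq$ union of systems'' direction, I would invoke Proposition \ref{TMM-total_behavior}, which states that the total behavior of a TMM with $n$ points and $m$ rods is the manifold of smooth functions $\mathbb{R}\to\mathbb{R}^{2(n+m)}$ satisfying some system $\Sigma$ of differential polynomial equations with integer coefficients. Restricting to any $n$ chosen variables is exactly a differential-elimination problem: apply Rosenfeld-Gröbner with a ranking that eliminates all the unwanted variables. By the property recalled in section \ref{differential_algebra}, the output is a finite family of differential regular chains, each consisting of differential polynomial equations and inequations with integer (rational, hence integer after clearing denominators) coefficients, and the solution set of $\Sigma$ is the union of the solution sets of these chains. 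Skipping from each chain the equations and inequations involving the eliminated variables (exactly as done for the cycloid example in section \ref{analysis_cycloid}) yields a finite collection of systems of differential polynomial equations and inequations on the $n$ retained variables whose union is the restricted behavior.

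For the reverse direction, given a finite family of systems $S_1,\dots,S_N$ on $n$ variables $x_1,\dots,x_n$, with system $S_i$ consisting of equations $p^{(i)}_1=\dots=p^{(i)}_{l_i}=0$ and inequations $q^{(i)}_1\neq 0,\dots,q^{(i)}_{m_i}\neq 0$, I would mimic the device used in Theorem \ref{alg_universality}. For each inequation $q^{(i)}_j\neq 0$ introduce an auxiliary smooth dependent variable $u^{(i)}_j$ and the equation $q^{(i)}_j\, u^{(i)}_j-1=0$; on the real smooth setting, imposing this equation is equivalent to $q^{(i)}_j\neq 0$ everywhere. Then in each enlarged system replace all equations by the single sum-of-squares equation $P_i=\sum_k (p^{(i)}_k)^2+\sum_j (q^{(i)}_j u^{(i)}_j-1)^2=0$, which again (over the reals) has the same solution set. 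Finally, combine the systems by the single differential polynomial $P=P_1\cdot P_2\cdots P_N$, whose zero set is precisely the union of the zero sets of the $P_i$. By Proposition \ref{solving_diff_systems}, there exists a TMM whose restricted behavior (restricted to the variables appearing in $P$) is the manifold of smooth solutions of $P=0$. Further restricting to the original variables $x_1,\dots,x_n$ (i.e.\ ignoring the auxiliary $u^{(i)}_j$'s in the behavior) yields exactly $\bigcup_i S_i$.

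The main obstacle I expect is the subtle interplay between real-smooth solvability and the algebraic reduction: the ``sum of squares'' and ``product'' tricks that are trivial for real algebraic sets need to be justified for smooth trajectories (e.g.\ verifying that $P_1\cdots P_N=0$ along a smooth curve really does imply the curve lies in one of the $V(P_i)$, and that an $u^{(i)}_j$ satisfying $q^{(i)}_j u^{(i)}_j=1$ exists and is itself smooth). These are essentially pointwise arguments using that $\mathbb{R}$ is a field and that the reciprocal of a smooth non-vanishing function is smooth, but they need to be stated carefully so that the auxiliary variables can be cleanly eliminated when passing to the restricted behavior. One should also remark, as in section \ref{independentization}, that the passage through Proposition \ref{solving_diff_systems} may introduce an ``independentization'' variable, which plays no role in the restricted behavior and can be discarded.
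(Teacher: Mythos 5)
Your proposal is correct and follows essentially the same route as the paper: the first inclusion via Proposition \ref{TMM-total_behavior} plus Rosenfeld-Gr\"obner elimination, and the converse via the auxiliary-variable trick $q\cdot t-1=0$ for inequations, the sum-of-squares reduction to a single equation per system, the product $P_1\cdots P_k$ for the union, and Proposition \ref{solving_diff_systems} to realize the result as a TMM. The cautionary remarks you add (smoothness of the reciprocal witness, and whether a smooth trajectory annihilating $P_1\cdots P_N$ must lie in a single $V(P_i)$ rather than switching between them) concern genuine subtleties that the paper's own proof also passes over silently, so they are a welcome refinement rather than a deviation.
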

\begin{proof}
    We split the theorem in two: 1. the machine behavior is the union of the systems; 2. for any finite set of systems, we can consider a machine with this behavior.
    
    1. By Prop. \ref{TMM-total_behavior}, the total behavior of any TMM is the solution of a differential polynomial system $\Sigma$ in $m$ variables ($m\geq n$). Thus, to restrict to $n$ variables, we can use the Rosenfeld-Gr\"obner algorithm of section \ref{differential_algebra}, hence the behavior can be given as a finite union of systems of differential-polynomial equations and inequations.
    
    2. With certain modifications, we can reuse some ideas in the proof of theorem \ref{alg_universality} (algebraic universality): for every inequation $q_i(x_1,\ldots,x_n)\neq 0$ we can introduce a new variable $t_i$ and the differential polynomial $\tilde{q_i}(x_1,\ldots,x_n,t_i)=q_i\cdot t_i-1$. Thus, even though with more variables, we got that every system can be considered as made up by only differential polynomial equations with integer coefficients.
    Then, we can use the peculiarity of the variables of TMMs to be real functions: any system of real differential polynomials $p_1=\ldots=p_l=0$ is equivalent to a single polynomial $(p_1)^2+\ldots+(p_l)^2=0$. Let $P_1, \ldots, P_k$ be the differential polynomials identifying the $k$ systems:  the union of their zero sets has to satisfy the polynomial $P= P_1\cdot \ldots \cdot P_k$. By Prop. \ref{solving_diff_systems}, the zero-set of $P$ can be constructed by a TMM. Thus, restricting on the variables $x_1,\ldots, x_n$, we got a machine with the sought behavior. 
\end{proof}
While theorems of algebraic universality and differential universality have many similarities, we have to highlight that in the algebraic case we have inequalities while in the differential only inequations. Remind that differential algebra does not distinguish between real or complex or other kind of indeterminates (Rosenfeld-Gr\"obner algorithms works for any differential ring), while semi-algebraic sets are defined specifically for the real case. 

%The general method of the previous subsection can be applied to any machine. Thus, given a TMM $\mathcal{M}$, as seen in the subsection \ref{full_behavior_diff_machines}, the full behavior can be described by a system $\Sigma$ of differential polynomial equations in all the latent variables. Then, considering any ranking eliminating some variables, we get a representation of the restricted behavior given by a family of differential systems (each one given by equations and inequations of the regular chains rewritten using the given ranking, and taking only the ones where it does not appear any variable that we want to eliminate).

%So the problem of characterizing the restricted behavior can be considered solved adopting the very basic tools of differential algebra. Indeed, given a radical ideal defined as the intersection of a finite number of differential systems (with equations and inequations), we can construct a machine solving exactly the wanted equations and inequations (to impose an inequation $p(X)\neq 0$, we can add a new variable $y$ (i.e. a cart) so that $y\cdot p(X)-1=0$).
%\vspace{\vDist}

We can also define the nature of the functions that TMMs can generate. After the definition of \textit{differentially algebraic functions} \cite[p. 777]{Rub1989}, we use it to give a classification of the constructible functions. This result is particularly interesting from an historical perspective: differently from the algebraic case, the classification of the curves traced by tractional motion was previously missing.
\begin{defn} \label{DA_def}
    A function $y$ is \emph{differentially algebraic} (shortly: D.A.) if it satisfies an algebraic differential equation, i.e. a differential equation in the form $P(t,y,y',\ldots,y^{(n)})$ where $P$ is a nontrivial polynomial in $n+2$ variables. 
\end{defn}
The non-triviality condition is essential because every function is solution of $0=0$. %For example, if we consider the differential ideal generated only by the equation (\ref{cycloid_cond2}) (without the other equations), we find that even with the ranking eliminating $t$, there is no equation in the differential regular chains where the dependent variable $t$ or its derivatives do not appear. Hence, elimination does not produce any polynomial depending only on $x, y$. This happens because there are not enough conditions to eliminate $t$ without arriving to a null polynomial.

\begin{prop}[constructible functions]\label{constructible_funs}
    The curves generated by a TMM are all and only the image of D.A. functions.
\end{prop}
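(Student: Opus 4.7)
The plan is to prove the two inclusions separately, leaning on the machinery of the preceding sections.

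For the inclusion from D.A.\ functions to TMM curves, I would start with a D.A.\ function $y=f(t)$ satisfying a nontrivial algebraic differential equation $P(t,y,y',\ldots,y^{(n)})=0$. After clearing denominators, $P$ may be taken to have integer coefficients. I would then form the augmented differential polynomial system consisting of $P=0$ together with the independentization equation of section~\ref{independentization} on the variable playing the role of $t$ (forcing its $a$-derivative to equal $1$), and feed this system to Proposition~\ref{solving_diff_systems}. The output is a TMM whose restricted behavior in the two coordinates $(t,y)$ is precisely the solution manifold of the system, and with the initial condition selecting $f$ (see section~\ref{initial_cond}) the machine traces the graph of $f$.

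For the reverse inclusion, I would fix a TMM and a distinguished point $M$ with coordinates $(x(a), y(a))$. Proposition~\ref{TMM-total_behavior} gives a differential polynomial system governing the total behavior, and Theorem~\ref{diff_universality} (whose proof already invokes Rosenfeld--Gr\"obner elimination) restricts it to a finite union of systems of differential polynomial equations and inequations in $x(a), y(a)$ and their $a$-derivatives. To extract a D.A.\ description I would exploit the reparametrization invariance of the traced curve: on any arc where $x'(a)\neq 0$ I take $x$ itself as the new parameter, so that $x'=1$ and $x^{(k)}=0$ for $k\geq 2$. Substituting these values into the differential polynomials eliminates the derivatives of $x$, leaving in each surviving equation an algebraic differential equation in $y$ viewed as a function of $x$; hence $y$ is D.A. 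On arcs where $x'\equiv 0$ the roles of $x$ and $y$ are swapped, yielding $x$ as a D.A.\ function of $y$. The curve is therefore covered by finitely many images of D.A.\ functions.

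The main obstacle I foresee is verifying that at least one polynomial in each system survives as a nontrivial algebraic ODE after the substitution $x'=1,\;x^{(k)}=0$. If every polynomial collapsed to $0=0$, the system would impose no link between $x$ and $y$, so the restricted behavior would be two-dimensional rather than a one-parameter trajectory---a contradiction with the fact that $M$ traces a curve. A secondary subtlety is that the statement must be read piecewise: the differential universality theorem already partitions the restricted behavior into finitely many components, and the $x'$-vs-$y'$ dichotomy can further subdivide, so ``image of D.A.\ functions'' is naturally interpreted as a finite union, in parallel with the algebraic analogue of Theorem~\ref{alg_universality}.
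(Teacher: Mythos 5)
Your architecture coincides with the paper's: analysis of a traced curve via independentization/reparametrization of one coordinate followed by elimination, and synthesis of a given D.A.\ function via Proposition~\ref{solving_diff_systems}, with the vertical-tangent case handled by swapping $x$ and $y$ exactly as in the paper. The one genuine gap is in your D.A.-to-TMM direction, at the step ``after clearing denominators, $P$ may be taken to have integer coefficients.'' Clearing denominators only works when the coefficients of $P$ are already rational, whereas Definition~\ref{DA_def} (following Rubel) allows the nontrivial polynomial $P$ to have arbitrary real coefficients. This matters because the machines can only realize differential polynomial systems with \emph{integer} coefficients (Propositions~\ref{prop:algebraic} and~\ref{solving_diff_systems}: the only lengths you may impose are integer multiples of the unit), so your appeal to Proposition~\ref{solving_diff_systems} cannot even be initiated for a D.A.\ function whose known algebraic differential equations have, say, transcendental coefficients. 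The paper closes precisely this hole by citing a nontrivial theorem of Rubel (Th.~1, p.~778 of the cited reference) asserting that every D.A.\ function also satisfies an algebraic differential equation with integer coefficients; your proof needs that result, not a denominator-clearing remark.

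Apart from that, your reverse inclusion is sound and essentially the paper's argument in different clothing: where the paper adds the equation $x'=1$ and re-eliminates, you substitute $x'=1$, $x^{(k)}=0$ for $k\geq 2$ into the regular chains and read $x$ as the independent variable of Definition~\ref{DA_def}; both yield valid necessary differential-polynomial conditions on $y$. Your nontriviality argument (if every equation collapsed to $0=0$ the point would have two degrees of freedom) is a reasonable expansion of the paper's standing hypothesis that the tracing point has one degree of freedom, and your piecewise reading of ``image of D.A.\ functions'' agrees with the paper's own remark that traced curves need only be \emph{locally} graphs of D.A.\ functions.
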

\begin{proof}
    Given a certain TMM, consider a point $(x,y)$ of the machine with one degree of freedom (if the point has two or more degrees of freedom it does not define a curve). 
    The curves traced by this point are defined as all the value of $x,y$ satisfying the restricted behavior, i.e. many systems of differential polynomial equations and inequations in $x,y$. Now we can be interested in interpreting a curve as the graph of a function (at least locally). So, we can consider the curve as a function $y=f(x)$ [respectively $x=g(y)$ in a neighborhood of vertical tangents; however, we will no longer consider this case because we only have to switch the role of $x$ and $y$]. %However, we will no longer consider this case because we only have to switch the role of $x$ and $y$ in the following reasoning.)\label{xparam_note}
    To achieve this aim, we can no longer consider $x$ as a dependent variable, but as an independent one. Algebraically, this is translated (as seen in the section \ref{independentization}) by the ``{independentization condition}'' $x'=1$. Indeed, if we add the new condition to the systems in $x,y$, we can again consider the elimination of $x$ obtaining a family of differential regular chains only in $y$. Thus, we find that the curves $(t,y(t))$ are (locally) solutions of differential polynomials in $y$, so D.A.

    Conversely, we have to recall that any D.A. function satisfies an algebraic differential equation \textit{with integer coefficients} \cite[Th. 1, p. 778]{Rub1989}. 
    As observed in section \ref{independentization}, the introduction of a new variable with constant derivative 1 for the independent one allows the construction of an equivalent polynomial not depending directly on the independent variable. Once eliminated the new variable, the obtained differential polynomial in $y$ can be solved by a TMM. Hence, the constructible functions are all and only the {differentially algebraic} ones. 
\end{proof}

This result is important because it means that TMMs generate a new dualism beyond algebraic/transcendental (and this time about functions, not curves or algebraic varieties as done with algebraic machines). Note, however, that a machine can construct functions that globally are not D.A., as visible since the first introduction of TMMs (\cite{Mil2012} concerned the construction of a machine tracing a cycloid, that globally is not D.A.), but locally each of these functions has to be D.A.

All the elementary functions are D.A., and even most of the transcendental functions that we find in analysis handbooks. Historically, the first example of non-D.A. function was the $\Gamma$ of Euler, as proven in \cite{Hol1886}. Note that $\Gamma$ function is not even locally D.A., that is why it cannot be constructed by TMMs.
%\vspace{\vDist}
\\

As an example, we can continue with the cycloid, observing some differences when we ``{independentize}'' different variables.

Adding the constraint $x'=1$ to the equations (\ref{cycloid_cond1}), (\ref{cycloid_cond2}) and (\ref{cycloid_cond3}), we can consider $y$ in function of $x$. This time, with a ranking eliminating $t$ and $x$, we obtain only one regular chain:
$$C_{\{x'=1\}}=\{
y'^2 y + y'^2+y-1=0;
y' y + y'\neq 0;
y+1\neq 0\}.$$

This representation is not useful to identify the traced curve as the usual parametrization of a cycloid. This identification is more visible if we ``{independentize}'' another variable. Consider the additional constraint $t'=1$ instead of $x'=1$. Even in this case, we obtain only one regular chain that, upon eliminating $t$, becomes:
$$C_{\{t'=1\}}=\{x'-y-1=0; y'^2+y^2-1=0; y'\neq 0\} $$

Now we can observe that this representation is the one of 
$$
\begin{cases}
x=t+\cos t\\
y=-\sin t
\end{cases}
$$
Indeed, instead of the trigonometric functions we can convert the system in a purely differential polynomial one:
$$
\begin{cases}
x'=1+y\\
y'^2+ y^2=1\\
y''=-y
\end{cases}
$$
Computationally, we can check that it has as regular chains exactly $C_{\{t'=1\}}$ (in both cases the computed regular chain is $\{ y - x' + 1 = 0;   x''^2 + x'^2 - 2x' = 0; x''\neq 0 \}$). %Thus, we find that the machine exactly describes the cycloid.
%\vspace{\vDist}

Obviously different machines can construct the same manifold of zeros. Remaining on the example of the cycloid, we can construct a TMM having a point of coordinate $(x,y)$ satisfying the equations of $C_{\{t'=1\}}$ by the standard method seen in Prop. \ref{solving_diff_systems} and Fig. \ref{diff_poly}. This way, we consider separately the variables $x$ and $y$ by the introduction of the points $(t,x)$ and $(t,y)$, impose algebraic and differential conditions on both dependent variables, and then we construct the point having as coordinate $(x,y)$. This method is general, but of course, does not provide the simplest machine (we do not intend to study the notion of simplicity, consider ``simplest machine'' in an intuitive sense).
%\footnote{Even if I have not introduced the notion of ``{simplicity}'' of a machine, we can consider a machine simpler than another in an intuitive way, i.e. if its construction recall less assembling instructions than the second. There are many possible metrics for the simplicity of the machine, some more concrete (for example the number of rods, wheel, carts), some others more analytical (e.g. about the system of differential polynomial equations describing the full behavior, or the system of differential equations and inequalities for the restricted behavior restrict to some variables). In every case, even in the more formalized metric about analytical counterpart, we have already observed that there are different possible rankings.
%	
%However, according to the trigonometrical definition of $x$ and $y$, they can be constructed using the machine of Fig. \ref{sin_machine} (with some minor modifications like the translation of $t$ for $x$ and the taking of the opposite for $y$).

%\begin{comment}
\subsection{Equivalence between TMMs}\label{equiv_diff_machines}
In the previous example, we have seen that two radical ideals were equivalent because they had the same representation. However, the opposite in general does not hold.

As seen in the section \ref{full_behavior_diff_machines}, the total behavior of TMMs is the solution of a system of differential polynomial equations, so the restricted behavior is the restriction to the relative manifold of solutions on some variables. Before checking the equality test between two machines on certain variables, we have to suppose that the variables of the restricted behavior are in the same number in the two manifolds.

Let $X=\{x_1,\ldots,x_n\}, Y= \{y_1,\ldots,y_m\}, Z=\{z_1,\ldots,z_l\}$ be sets of dependent indeterminates. Consider the radical differential ideal $A$ generated by the differential polynomials $p_1, \ldots,p_i$ on $X\cup Y$, and consider $B$ generated by $q_1,\ldots,q_j$ on $X\cup Z$.

In general, there is no known method to check the equality of two radical differential ideals
represented by regular differential chains
%defined by the union of solutions of systems of differential polynomial equations and inequations 
(it is related with the so called \textit{Ritt's open problem} \cite{GKO2009}), but in this case we have much stronger hypothesis. In fact, we known the generators of the total behavior: with this condition the solution is easily achievable and computable.

To check the equality between two total behaviors (i.e. between radical differential ideals given by a finite set of generators), we can fix a certain ranking and compute the regular differential chains using the Rosenfeld-Gr\"obner algorithm, and then we can test whether all the generators of the first ideal belong to the second and vice-versa\footnote
    {According to the notation of this section, in the case of total behaviors $Y$ and $Z$ are empty sets of variables. We can test whether the ideals $A$ and $B$ are equal using the Maple command \texttt{BelongsTo} of the \emph{DifferentialAlgebra} package. Once given any ranking, and constructed with \texttt{RosenfeldGroebner} the ideals $A$ and $B$ by their generators, to check the equality we only have to test whether all the generators of $A$ belongs to $B$ and vice-versa. In Maple, the command \texttt{BelongsTo([$p_1,\ldots,p_n$],B)} produces as output a list of $n$ true/false, the $i$-th of which indicates whether $p_i$ belongs or not to $B$. Conversely \texttt{BelongsTo([$q_1,\ldots,q_n$],A)} can be used to check the belonging to $A$.
}.

But we are interested in behaviors obtained by eliminating some variables, that are in general represented by an intersection of families of regular chains, and there is no known algorithm to pass from a representation of families of regular chains to a list of generators. 

Given the ranking $Z\gg Y\gg X$ (or $Y\gg Z\gg X$), compute the families of regular chains $R_A$ and $R_B$ representing respectively $A$ and $B$ (e.g using Rosenfeld-Gr\"obner's algorithm).
Let $R_A^*$ be $R_A$ without the equations/inequations involving $Y$ and, similarly, let $R_B^*$  be $R_B$ eliminating $Z$.
We can verify the inclusion of $A$ in $B$ restricted to $X$ by checking whether all the $p_i$ belongs to $R_B^*$; similarly for the opposite inclusion check whether $q_i$ belongs to $R_A^*$. If both the inclusions are verified, the two systems are equivalent restricted to the variables $X$.
\\

Note that we have treated TMMs without any reference to initial conditions. As far as my knowledge goes, the equality problem is still open if we introduce initial values (cf. note \ref{note:initial_values} at page \pageref{note:initial_values}). With regard to some positive results, we can consider \cite{BR2012}, which provides an algorithm for the symbolic solution of linear boundary problems, passing from differential algebra to \textit{integro-differential algebras} (Green's operators). %For a Maple implementation of such integro-differential Green's operators for ordinary boundary problems, see the package \emph{IntDiffOp}\cite{KRR2012}.

Even though without any final answer about initial conditions problems, we want to underline that differential algebra permits to check the equivalence between TMMs. On the other side, even in the more concrete approach to calculus, \textit{computable analysis} \cite{Wei2000}, it is not possible to check the equality test between any general couple of generated objects (i.e. computable numbers). 
Considering intuitively ``exactness'' as the property of a computational frameworks (both in analytic or geometric paradigm) to be independent from non-finitary procedures (as unlimited approximations), we think that it will be interesting in future to deepen the relation between the computability of the equality test in a theory and the exactness of the theory.

\subsection{Differential machines equivalent to algebraic ones}\label{diff_as_alg_machines}
Consider a TMM defining the motion of a point $P$ of coordinates $(x,y)$ so that the tangent in $P$ is perpendicular to the line passing through $P$ and the point $(x+1/2,0)$ as the Fig. \ref{sqrt_machine}  shows. This machine is defined by the differential polynomial $x'-2y y'$, which is the total derivative of $x - y^2$. Therefore, for every constant $c\in\mathbb R$, solutions are parabolas satisfying $x=y^2+c$. That means that we are able to trace any of the solutions of this TMM with an algebraic one. Hence, the general question arises: can we characterize the TMMs having solution constructible with algebraic machines (by eventually adding a finite number of real constants of integration)?
\begin{figure}
	\center 
	\includegraphics[scale=.5] {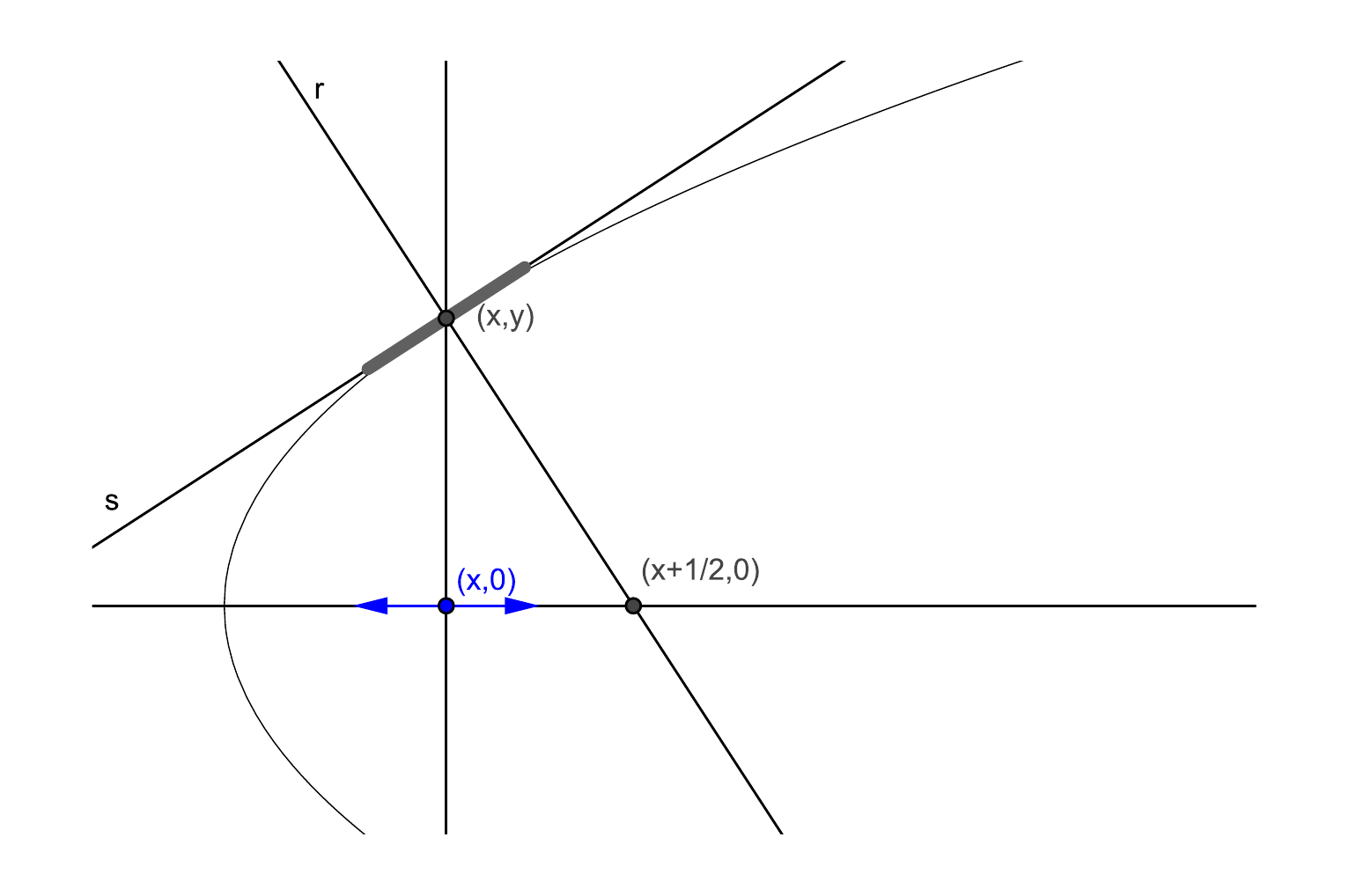}
	\caption[A machine with the tangent in $(x,y)$ perpendicular to the line passing through $(x+1/2,0)$]{Sketch of a machine with the tangent in $(x,y)$ perpendicular to the line passing through $(x+1/2,0)$. To simplify the visualization, only the cart for $(x,0)$ is shown.
	}\label{sqrt_machine}
\end{figure}
%\vspace{\vDist}

Given a differential system or even its restriction on some variables, to find the algebraic constraints satisfied we can simply use the orderly ranking in the Rosenfeld-Gr\"obner algorithm. There are algebraic constraints if and only if in the obtained family of regular differential chains there are polynomial equations without any proper derivative (i.e. of order 0)\footnote
{In Maple, given the dependent variables $x_1,\ldots,x_n$ and the independent variable $t$, one can construct a differential ring with the orderly ranking by the command
	\texttt{R := DifferentialRing(blocks = [[$x_1,\ldots,x_n$]], derivations = [t])} (the double square brackets \texttt{[[$\ldots$]]} indicate the orderly ranking). After the usual construction of the ideal \texttt{ideal} with the Rosenfeld-Gr\"obner algorithm, the purely algebraic constraints are given by \texttt{Equations(ideal, order=0)}.
}.

It is more complicated if we are interested not only in algebraic constraints, but also on first integrals given by algebraic constraints. Given a system $\Sigma$ of ODEs, a \textit{first integral} is a function $f(t)$ whose value is constant over the independent variable $t$ along every solution of $\Sigma$. % (i.e. $\frac{d}{dt}f=0$ for every solution of $\Sigma$). 
To find such first integral, we can use the algorithm \textit{findAllFirstIntegrals} proposed in \cite{BL2015}. The algorithm takes as input a family of systems of differential polynomial equations and inequations, and a set of monomials $\mu_i$ in the variables and their derivatives ($i=1,\ldots,n$); it returns as output the coefficients $\alpha_i$ s.t. the differential polynomials $\sum_{i=1}^n \alpha_i \mu_i$ are first integrals of $\Sigma$.

If in the algorithm we consider as input the behavior of a TMM (a family of regular chains) and as $\mu_i$ the various combination of the variables (not their derivatives), we can obtain all the algebraic first integrals of any fixed degree (e.g. given the variables $x,y$, to check all the algebraic first integrals up to second degree we can consider as $\mu_i$ the monomials $x, y, x^2, xy, y^2$). However, while for any degree we can suitably consider the monomials, at my knowledge there is no general method to check whether the behavior allows any algebraic first integral (of arbitrarily high degree).

\subsection{3D TMMs}
We considered TMMs working on a plane, but what about the extension of these machines beyond the planar behavior? Here we provide a sketch for the possible physical implementation of a 3D TMM and some shallow explanations about its behavior. Physically, we can introduce a cube of gelatinous material: on it we can hypothesize that thin rods can freely move, while a small disk of center $C$ has to locally represent the tangent plane to the surface walked by $C$. Similarly to 2D TMMs, the main idea is to set some constraints to the tangent: in this case consider $u(x,y)$ (from now on we implicitly assume the dependence on $x,y$)  as the function to be found and, as usual in partial differential equations, let $u_x=\frac{\partial u}{\partial x}$, $u_y=\frac{\partial u}{\partial y}$.
Considering the point $C=(x,y,u)$, the disk centered in $C$ has two perpendicular rods on the tangent plane passing respectively through $(x+1,y,u+u_x)$  and $(x,y+1,u+u_y)$, as visible in Fig. \ref{fig:3d}. That imposes the suitable partial derivative conditions $u_x, u_y$, and the lengths $u_x, u_y$ can be used to set other conditions (as in 2D TMMs).

The study of such machines could be interesting as a future perspective.
However, even if it is far away from the main aims of this work, we give an informal justification about the idea that 3D TMMs do not generate any function $\mathbb R \rightarrow \mathbb R$ out of the ones constructible by 2D TMMs (it is not a complete proof, we do not deal with coefficients or with a sufficiently deepened characterization of 3D TMMs).

Looking for a characterization of the behavior of such machines, considering as variables smooth $\mathbb R^2 \rightarrow \mathbb R$ functions, we can perform their sum, multiplication and derivation. Differently from the 2D case, this time the derivation is with respect to two different independent variables ($x,y$).
Even with these machines, the suitable analytical tool is differential algebra, but with partial derivatives: such development is present since Ritt's works \cite[Ch. IX]{Rit1950}. 
The elimination is still available, so a constructible function $u: \mathbb R^2 \rightarrow\mathbb R$ defined by a 3D TMM has to be solution of non-trivial differential polynomials in $u$ and a finite number of its partial derivatives. 

To compare constructible functions between 3D and 2D TMMs, we have to restrict somehow the functions of the 3D case. Specifically, the graph of a unary function can be obtained intersecting the graph $G$ of $(x,y,u(x,y))$ with a plane $\alpha$ perpendicular to $OXY$. 
The intersection between $\alpha$ and $OXY$ is a line: let its natural parametrization be $x=\frac{a}{\sqrt{a^2+b^2}}t+a_0, y=\frac{b}{\sqrt{a^2+b^2}}t+b_0$ (as natural parametrization we mean that the derivative of the parametrization has unary length).
Thus, called $F$ the real function whose graph is defined by the intersection of $\alpha$ and $G$, $F(t)=u\left(\frac{a}{\sqrt{a^2+b^2}}t+a_0, \frac{b}{\sqrt{a^2+b^2}}t+b_0\right)$. We want to show that $F$ is a D.A. function.

Consider the function $f(x,y)=u\left(\frac{a}{\sqrt{a^2+b^2}}x+a_0, \frac{b}{\sqrt{a^2+b^2}}x+b_0\right)$ ($y$ is not used in its computation). We cannot convert such $f$ directly with differential polynomials in partial differential algebra (differential algebra doesn't deal with arguments of functions), but we can compute the partial derivatives of $f$. Trivially $f_y =0$, and, using the chain rule for the derivation of compositions involving multivariable functions, $f_x = u_x \frac{d}{dx}\left(\frac{a}{\sqrt{a^2+b^2}}x+a_0\right) + u_y \frac{d}{dx}\left(\frac{b}{\sqrt{a^2+b^2}}x+b_0\right) = \frac{a u_x + b u_y}{\sqrt{a^2+b^2}}$.
Thus, we can finally add the differential polynomial conditions for $f_x, f_y$ to the polynomials defining the total behavior of the 3D TMM. By the elimination of all the dependent variables out of $f$, we get polynomials on $f$ and its partial derivatives. But, being $f_y=0$, we can simply rename 
$f(x,y)=F(x), f_x(x,y)=F'(x), f_{xx}(x,y)=F''(x), \ldots$, thus obtaining a differential polynomial just on $F$ and its ordinary derivatives, i.e. $F$ has to be a D.A. function. 
\begin{figure}
    \centering
    \includegraphics[width=.4\textwidth]{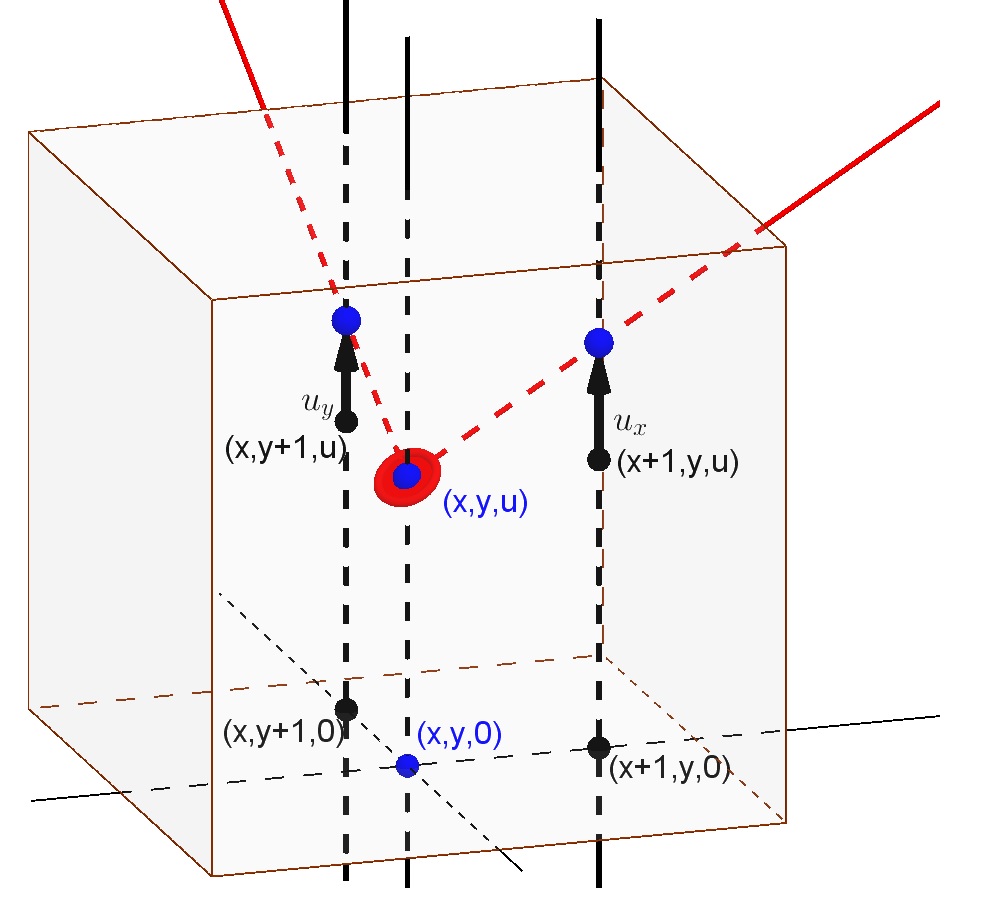}
    \caption{Sketch of 3D TMMs. To set partial derivative conditions on $u(x,y)$ we  can consider a jelly cube. Inside, a small disk centered in $(x,y,u)$ is constrained by two rods to lie on the plane passing through $(x+1,y,u+u_x)$  and $(x,y+1,u+u_y)$. That imposes $u$ to satisfy certain values of $u_x$ and $u_y$.}
    \label{fig:3d}
\end{figure}

%Extending the constructions of the planar case, we can suppose that we can still perform arithmetic operations on the position of the points. Thus, being differential algebra closed about elimination (also using partial derivatives), the 3D-TMMs constructible functions have to be solutions of polynomials in $x,y,u$ and a finite amount of partial derivatives $u_x,u_y,u_{xx},u_{xy},u_{yy} \ldots$. While the possibility of considering partial derivatives could be interesting to be deepened in future works, I want to evince that 3D TMMs don't provide new single-variable functions.

\section{Conclusions and future perspectives}\label{balance}%\label{conclusion_ch}
%This paper proposed tractional motion and differential algebra as two components to extend Descartes' balance between symbolic computation (analysis without infinity) and analog computation (synthesis with ideal machines). That brought to a new dualism beyond algebraic/transcendental. In doing that we clarified the constructive limits of tractional motion (that was a historically open problem). In this final part we reflect on such results. Furthermore, some promising directions for future researches are sketched.
%%Machines, algebra, and geometry are the unavoidable components of my setting. In this final part I am going to shed light on their connection with Descartes' geometrical method, observing how my setting opens the possibility of a new dualism beyond algebraic/transcendental. However, the main difference from Descartes is that I do not consider TMMs as a static limit for geometric intuition, but as a step toward new future approaches.
%
\subsection{Extending Descartes' balance}
The richness of Cartesian setting depends on the correspondence between objects of the analytical and the synthetic part. %, i.e. equations and curves. 
From this perspective, the role of suitable ideal machines was central. 
The balance between machines, algebra, and geometry as suggested by Descartes was historically broken by the increase in importance of the analytical part with respect to geometric constructions. In particular, infinitesimal analysis introduced infinitary tools in the analytical part such as series or infinitesimal elements. However, even though with some centuries of delay, we can consider the finite approach to calculus objects of differential algebra as a legitimate descendant of polynomial algebra. Contrarily, the synthetic part can be managed with the proposed TMMs, which, as a well-defined model for tractional constructions, can be considered as an extension of Descartes' machines. The surprising result is that these heirs of Descartes' analytical and synthetic tools are still in balance (\textit{differential universality theorem}), being the behavior of TMMs exactly the space of solutions definable with differential algebra (restricted to ordinary differential equations). Furthermore, to study the properties of the machines there is no need of geometric or analytic insights, because the algebraic part provides algorithms to compute them autonomously (automated reasoning).
%\vspace{\vDist}

In this paper, we have been able to define the behaviors of TMMs that have been introduced to formalize tractional constructions in a modern way. To my knowledge, it is the first clear definition of the limits of tractional motion. Such limits permit a distinction between objects that are constructible with TMMs and others that are not. To define the behavior of such machines, we used manifolds of functions: if Descartes' setting defined a dualism between algebraic and transcendental curves, our setting facilitates a new dualism between functions. As introduced in section \ref{constructible_functions}, the obtainable functions are the \textit{differential algebraic} ones (shortly: D.A.), i.e. solutions of algebraic differential equations\footnote
%(cf. Def. \ref{DA_def}, page \pageref{DA_def})
{
	Note that, even though without the geometrical interpretation, such dualism was introduced in analog computation by Shannon's \textit{General Purpose Analog Computer} \cite{Sha1941} many centuries after the introduction of tractional constructions. As visible in the title of Shannon's paper, the GPAC was a theoretical model for the analog computer called \textit{differential analyzer}.
	
	Furthermore, we also need a note on an implicit assumption. 
	We considered function locally smooth (i.e. of class $C^\infty$ in a certain domain). In general, to be a solution of an algebraic differential equation of order $n$, we have to assume that the function is $C^n$, not necessary $C^\infty$. For not-smooth functions, most of the results of differential algebra %(as Ritt-Raudenbush theorem) 
	fail.
	These cases are treated in \cite{Rub1983}. 
}.
%\footnote{An algebraic differential equations is a differential equation in the form $P(t,y,y',\ldots,y^{(n)})$, where $P$ is a nontrivial polynomial in $n+2$ variables, $t$ is the independent variable, and $y$ is the dependent one.}. 
As already mentioned, all elementary functions are D.A., and even most of the transcendental functions that we find in most of the analysis handbooks. Historically, the first example of non-D.A. function was the $\Gamma$ of Euler, as proven in 1886 by H{\"o}lder. %\cite{Hol1886}.
According to \cite{Rub1989}, not-D.A. functions are named \textit{transcendentally transcendental}. %\footnote {Other authors call D.A. functions ``{hypo-transcendental}'' and T.T. functions ``{hyper-transcendental}.'' For a more recent survey on such dualism, studied particularly because of its connection with analog computing, cf. \cite{Rub1989}.}.
About functions in one variable, we can provide a finer distinction beyond the algebraic and transcendental dualism. Of course algebraic functions are also D.A., so, calling \textit{algebraic-transcendental} the functions that are D.A. but not algebraic, we can divide functions in the cases of Table \ref{fun_cases} (with some examples).
\begin{table}
	\caption[Categorization of functions in one variable.]
	{Categorization of functions in one variable (taken from \cite[p. 501]{Sha1941}).}
	\label{fun_cases}
	\begin{tabular}{|c|c|}
		\hline
		\textbf{Transcendental} & \textbf{Algebraic}\\
		\hline		
		\begin{tabular}{p{2.5cm}|p{2.7cm}}
			\textit{Trascendentally trascendental}
			&
			\textit{Algebraic-trascendental}
			\\
			\hline
			Euler $\Gamma$, Riemann $\zeta$
			&
			$e^x, \log(x)$; trigonometric, hyperbolic and inverses; Bessel, elliptic and probability functions			
		\end{tabular}
		&
		\begin{tabular}{p{2.5cm}|p{1.7cm}}
			\textit{Irrational algebraic}
			&
			\textit{Rational}
			\\
			\hline
			$x^m$ ($m$ a rational fraction); solutions of algebraic equations in terms of a parameter
			&
			polynomials, quotients of polynomials
		\end{tabular}
		\\
		\hline
	\end{tabular}
\end{table}

\subsection{Beyond TMMs} \label{Beyond_TMMs}
About future extension beyond TMMs, a crucial role could be played by Euler $\Gamma$ function. As we are going to examine, this function can play for D.A. functions the same role as the exponential curve played for algebraic curves.

Algebraic curves are defined as the zero set of polynomials, where a polynomial is an expression 
that involves only the operations of addition, subtraction, multiplication, and non-negative integer exponents. One can ask to relax the constraint of considering only non-negative integer exponents (e.g. we may be interested in considering monomials in $x^{-\frac 3 2 }$, or in $x^{\sqrt 5}$):
from this perspective, the exponential curve solves the problem of generic exponent.
Concerning constructions, tractional motion justified the exponential curve with the introduction of loads subject to friction or with blades or wheels. 
Therefore, even though the extension from polynomials to formulas with any exponent is not enough to define analytically all the functions constructible by tractional motion, the construction of the exponential was important to focus on the role of the wheel for the expansion into the synthetic aspect.

D.A. functions are solutions of differential polynomials. Differential polynomials are polynomials in the variables and their derivatives, but these derivatives have to be of non-negative integer order. Negative integer-order derivatives can be considered integrals. However, what does it mean to consider derivatives of non-integer order? This question is older than three centuries and is at the core of \textit{fractional calculus}.

The idea of extending the meaning of $\frac{d^n y}{dx^n}$ to $n\notin \mathbb{N}$ appeared the first time in a letter of Leibniz to L'H\^{o}pital (September 30, 1695), and later got many mathematicians interested in it: 	Euler, 	Fourier, 	Abel, 	Liouville,	Riemann, 	Laurent,	Hadamard, 	Schwartz (for more precise historical references see %\cite{Dug1994},
\cite{Ros1975}, \cite{Ros1977}).
Nowadays, fractional calculus finds use in many fields of science and engineering, including fluid flow, rheology, diffusive transport akin to diffusion, electrical networks, electromagnetic theory, and probability.
There is not a unique definition of fractional integral, but the following (usually called \emph{Riemann-Liouville fractional integral}) is probably the most used version. We are proposing it just to give a first shallow idea, for clarifications and further reading, see \cite{MR1993}.

Starting from the Cauchy formula for repeated integration (it allows to compress $n$ antidifferentiations of a function into a single integral)
$$D^{-n}f(x) = \frac{1}{(n-1)!} \int_a^x\left(x-t\right)^{n-1} f(t) dt,$$
we can generalize $n$ to non-integer values and, since $n!=\Gamma(n+1)$, we get
$$D^{-v}f(x) = \frac{1}{\Gamma(v)} \int_a^x\left(x-t\right)^{v-1} f(t) dt.$$
This formula links $\Gamma$ function and fractional calculus.
The construction of $\Gamma$ with idealized machines could be particularly important because a widely accepted geometric interpretation of fractional calculus is still missing (for some attempts, see \cite{Add1997, Pod2002, TTR2013, Her2014}). Hence, from a historical/philosophical perspective, fractional calculus is now looking for a constructive-synthetic geometrical legitimation, as it happened in early modern period with transcendental curves. We hope that TMMs can constitute a solid step over which such extensions may come.

\subsection{Further perspectives}
%From the point of view of the cognitive science of mathematics, mainly introduced by \cite{LN2000}, mathematical ideas are analyzed from the background of embodied cognition. 
Since Newton and Leibniz, the core concept of calculus is the constructive role of methods involving the infinity.
On the contrary, the proposed mechanical setting and the differential algebra counterpart suggest that it is possible to consider calculus (at least the part dealing with differential polynomials) without the need of infinity, 
but with the %\textit{conceptual metaphor} (intended as in \cite{LN2000}) 
less abstract idea that ``the wheel direction is the tangent.'' 
%It would be interesting to deepen such view, and its consequences on mathematical thinking and learning. 
A pedagogical peculiarity of such an approach to infinitesimal analysis with mathematical machines is that students can manipulate concepts usually considered too abstract. Some very preliminary attempts to introduce tractional machines in math education have been proposed in a workshop \cite{MDP2012} and in some papers \cite{Mil2012UCNC, DPM2012, SM2013}. A proposal for science museum activities can be also found in \cite{MD2012}.

Furthermore, the possibility of a restructuration of infinitesimal analysis in the light of TMMs and differential algebra should be interesting to be investigated from computational, instrumental, visual, algebraic, cognitive, and foundational viewpoints \cite{Mil2017}.
\\

%\vspace{\vDist}

Finally, we want to conclude with a remark on the mutual evolution of analog and digital/symbolic computation in mathematics.
An approach to break the Church-Turing thesis is to check whether some
results beyond Turing computational limits may be reached somehow (the \textit{hypercomputation} problem, e.g. \cite{Cop2002}). With regard to this question, it could be interesting to set the problem from a purely mathematical point of view.
Instead of considering the physical limits of analog computing, one could look for
an ``exact'' approach to analog computation through geometry because of its cognitive simplicity and richness. From this point
of view, considering diagrammatic constructions and symbolic manipulations
respectively as analog and digital computations, the evolution of mathematical
foundational paradigms from the geometric/arithmetic perspectives (with
their relative intercourses and extensions) can be considered an evolution of
computational limits.

Considering the computational power of mathematical approaches, Pythagorean
rational numbers (arithmetic perspective) were not sufficient to express some values generated by the arithmetic reinterpretation of ruler-and-compass geometric constructions (the length of the diagonal of a square with respect to the edge). On the contrary, later polynomial
algebra introduced values not geometrically constructible by ruler and compass
(the exactness problem in the early modern period). However, the unbalance
between the powers of the different paradigms is not a constant. Descartes balanced
their powers in analytical geometry, and this powerful paradigm became
the hard core over which calculus evolved, generating a rich symbolism inspired
by ideas derived from geometry and mechanics. Something new happened with
regard to calculus: if the geometrical paradigm had already been abandoned
in other periods, for the first time there was the acceptance of entities generated by infinite processes (note that infinite procedures were also adopted by Archimedes, but only as an investigative tool to be later interpreted from a synthetic perspective).
This acceptance of infinite processes made it difficult to re-interpret the
obtained entities in everyday (finite) experience.
Hence the claim of this paper: we reached part of infinitesimal calculus with suitable
geometrical constructions (synthesis) and symbolic tools provided by a finite
algebra (analysis).

Even if the balance introduced has no claim of constructing something beyond
Turing limits, it is another case in which analog and digital constructive
powers are balanced (as in Descartes' geometry). As differential calculus
evolved on Cartesian geometry, we hope that in future it could be interesting to
investigate whether the new balance proposed in this work could become a step
for new computational paradigms and mathematical constructive approaches
beyond the limits of today.

\bibliographystyle{plainnat} %\bibliographystyle{plain}
\bibliography{000-paper}
%\addcontentsline{toc}{chapter*}{Acknowledgments}
%\include{./_preface/acknowledgments}

\end{document}